\documentclass[11pt,letterpaper]{amsart}
\pdfoutput=1
  
\usepackage{amssymb}
\usepackage{mathtools}
\usepackage{microtype}
\usepackage[shortlabels]{enumitem}
\usepackage[bookmarks, bookmarksdepth=2, colorlinks=true, linkcolor=blue, citecolor=blue, urlcolor=blue]{hyperref}

\setcounter{tocdepth}{1}
\setenumerate[0]{label=(\alph*)}

\newcommand{\arxiv}[1]{\href{http://arxiv.org/abs/#1}{{\tt arXiv:#1}}}

\numberwithin{equation}{section}

\theoremstyle{plain}
\newtheorem{theorem}{Theorem}[section]
\newtheorem{maintheorem}{Theorem}

\newtheorem{proposition}[theorem]{Proposition}
\newtheorem{lemma}[theorem]{Lemma}

\newtheorem{corollary}[theorem]{Corollary}

\theoremstyle{definition}

\newtheorem{defn}[theorem]{Definition}
\newenvironment{definition}[1][]{\begin{defn}[#1]\pushQED{\qed}}{\popQED \end{defn}}

\theoremstyle{remark}
\newtheorem{rmk}[theorem]{Remark}
\newenvironment{remark}[1][]{\begin{rmk}[#1] \pushQED{\qed}}{\popQED \end{rmk}}

\newtheorem{eg}[theorem]{Example}
\newenvironment{example}[1][]{\begin{eg}[#1] \pushQED{\qed}}{\popQED \end{eg}}


\DeclareMathOperator{\Lie}{Lie}
\DeclareMathOperator{\Spec}{Spec}

\DeclareMathOperator{\GL}{GL}


\newcommand\R{\ensuremath{\mathbb{R}}}
\newcommand\C{\ensuremath{\mathbb{C}}}
\newcommand\Z{\ensuremath{\mathbb{Z}}}
\newcommand\Q{\ensuremath{\mathbb{Q}}}

\newcommand\Field{\ensuremath{\mathbb{F}}}

\DeclareMathOperator{\HH}{H}
\newcommand\RH{\ensuremath{\widetilde{\HH}}}
\DeclareMathOperator{\CC}{C}
\newcommand\RC{\ensuremath{\widetilde{\CC}}}


\DeclareMathOperator{\Char}{char}

\newcommand\Set[2]{\ensuremath{\left\{\text{#1 $|$ #2}\right\}}}

\newcommand\cA{\ensuremath{\mathcal{A}}}

\newcommand\cC{\ensuremath{\mathcal{C}}}

\newcommand\cF{\ensuremath{\mathcal{F}}}

\newcommand\cO{\ensuremath{\mathcal{O}}}

\newcommand\cW{\ensuremath{\mathcal{W}}}

\newcommand\fa{\ensuremath{\mathfrak{a}}}
\newcommand\fb{\ensuremath{\mathfrak{b}}}
\newcommand\fc{\ensuremath{\mathfrak{c}}}
\newcommand\fd{\ensuremath{\mathfrak{d}}}

\newcommand\fu{\ensuremath{\mathfrak{u}}}

\newcommand\bA{\ensuremath{\mathbf{A}}}
\newcommand\bB{\ensuremath{\mathbf{B}}}

\newcommand\bG{\ensuremath{\mathbf{G}}}

\newcommand\bT{\ensuremath{\mathbf{T}}}
\newcommand\bU{\ensuremath{\mathbf{U}}}
\newcommand\bV{\ensuremath{\mathbf{V}}}

\newcommand\bX{\ensuremath{\mathbf{X}}}

\newcommand\bbA{\ensuremath{\mathbb{A}}}

\newcommand\bbF{\ensuremath{\mathbb{F}}}
\newcommand\bbG{\ensuremath{\mathbb{G}}}

\newcommand\bbL{\ensuremath{\mathbb{L}}}

\newcommand\bbP{\ensuremath{\mathbb{P}}}

\newcommand\bbZ{\ensuremath{\mathbb{Z}}}

\newcommand\oS{\ensuremath{\overline{S}}}

\newcommand\ok{\ensuremath{\overline{k}}}

\DeclareMathOperator{\diag}{diag}
\DeclareMathOperator{\St}{St}
\DeclareMathOperator{\Tits}{\ensuremath{\mathcal{T}}}
\DeclareMathOperator{\minPara}{\mathfrak{P}_{\min}}
\DeclareMathOperator{\apart}{\mathfrak{A}}

\DeclareMathOperator{\RelRoot}{\Phi_{\mathrm{rel}}}

\DeclareMathOperator{\RelRootPlus}{\Phi^+_{\mathrm{rel}}}

\DeclareMathOperator{\OneParam}{\sigma}
\newcommand{\id}{\mathrm{id}}
\newcommand\presup[2]{\prescript{#1}{}{#2}}

\title{The Steinberg representation is irreducible}

\author{Andrew Putman}
\address{Department of Mathematics; University of Notre Dame; 255 Hurley Hall; Notre Dame, IN 46556}
\email{andyp@nd.edu}
\thanks{AP was supported in part by NSF grant DMS-1811210.}

\author{Andrew Snowden}
\address{Department of Mathematics; University of Michigan; 2074 East Hall; 530 Church St; Ann Arbor MI 48109}
\email{asnowden@umich.edu}
\thanks{AS was supported in part by NSF grant DMS-1453893}

\date{February 17, 2022}

\begin{document}

\newpage

\begin{abstract}
We prove that the Steinberg representation of a connected reductive group over an infinite field is
irreducible.  For finite fields, this is a classical theorem of Steinberg and Curtis.
\end{abstract}

\maketitle
\tableofcontents

\section{Introduction}
\label{section:introduction}

Let $\bG$ be a connected reductive group over a field $k$, e.g., $\bG = \GL_n$.  For another field $\bbF$, let $\St(\bG;\bbF)$ be the Steinberg representation  of the discrete group\footnote{The Steinberg
representation is not an algebraic representation of $\bG$, but just a representation of
the abstract group $\bG(k)$.  However, its definition uses the structure of $\bG$ as an
algebraic group defined over $k$, so it would not make sense to write it as
$\St(\bG(k);\bbF)$.}
 $\bG(k)$ over $\bbF$. This representation plays a prominent role in the representation theory of $\bG(k)$, and also has connections to number theory and K-theory. When $k$ is finite, $\St(\bG;\bbF)$ is finite dimensional and Steinberg and Curtis showed that it is usually
irreducible. When $k$ is infinite, it is typically infinite dimensional. Our main theorem is that
$\St(\bG;\bbF)$ is always irreducible when $k$ is infinite.  Previously, this was not known in complete generality
even for $\bG=\GL_2$.

\subsection{Background}

Before explaining the contents of this paper in more detail, we recall the construction of the Steinberg representation, review some of its history, and discuss its connections to other topics.

\subsubsection{Tits building}

A tremendous amount of the structure of $\bG$ is encoded in its spherical
Tits building $\Tits(\bG)$.  This is a simplicial 
complex whose simplices are in
bijection with the proper parabolic $k$-subgroups of $\bG$.  For proper parabolic $k$-subgroups
$P$ and $P'$, the simplex corresponding to $P$ is a face of the simplex corresponding to $P'$
when $P' \subset P$.  The conjugation action of $\bG(k)$ on itself permutes the parabolic $k$-subgroups and thus induces an action
of $\bG(k)$ on $\Tits(\bG)$.  See \cite{AbramenkoBrownBuildings, TitsBuildings} for more about Tits buildings.

\begin{example}
If $\bG = \GL_n$, then the proper parabolic $k$-subgroups
are the stabilizers of nontrivial flags
\[0 \subsetneq V_0 \subsetneq V_1 \subsetneq \cdots \subsetneq V_i \subsetneq k^n,\]
so $\Tits(\GL_n)$ is the simplicial complex whose $i$-simplices are such flags.  
\end{example}

\begin{remark}
It is not obvious that the above description of $\Tits(\bG)$ specifies a simplicial complex. However,
what we really care about is the homology of $\Tits(\bG)$, and for this it is enough to
understand its barycentric subdivision, which is easy to describe completely:
it is the simplicial complex whose $i$-simplices are decreasing chains
\[\bG \supsetneq P_0 \supsetneq P_{1} \supsetneq \cdots \supsetneq P_i \supsetneq 1\]
of proper parabolic $k$-subgroups.
\end{remark}

\subsubsection{Steinberg representation}
Let $r$ be the semisimple $k$-rank of $\bG$, e.g., if $\bG = \GL_n$ then $r = n-1$.  By definition, $\Tits(\bG)$ is an $(r-1)$-dimensional
simplicial complex.
The Solomon--Tits theorem \cite{SolomonTits} says that
$\Tits(\bG)$ is homotopy equivalent to a wedge of $(r-1)$-dimensional spheres.  For a field
$\bbF$ (or, more generally, a commutative ring), the {\em Steinberg representation} of $\bG(k)$ over $\bbF$, denoted $\St(\bG;\bbF)$,
is the unique nontrivial reduced homology group $\RH_{r-1}(\Tits(\bG);\bbF)$.  The action
of $\bG(k)$ on $\Tits(\bG)$ induces an action of $\bG(k)$ on $\St(\bG;\bbF)$, making it into
a representation of $\bG(k)$ over $\bbF$.

\begin{remark}
\label{remark:anisotropic}
It might be the case that $\bG$ is anisotropic, i.e., has no proper parabolic $k$-subgroups.  This
implies that $\Tits(\bG) = \emptyset$ and that the semisimple $k$-rank of $\bG$ is $0$.  Our convention
then is that
\[\St(\bG;\bbF) = \RH_{-1}(\Tits(\bG);\bbF) = \RH_{-1}(\emptyset;\bbF) = \bbF\]
is the trivial representation. 
\end{remark}

\subsubsection{Finite fields}
The representation $\St(\bG;\bbF)$ was first studied for finite fields $k$.  In this case,
$\St(\bG;\bbF)$ is a finite-dimensional representation of the finite group
$\bG(k)$ that is 
usually\footnote{There are cases where it is reducible.  For example, it is reducible if $\bG = \GL_2$, the field
$k$ is finite of cardinality $q$, and $\Field$ has finite characteristic $\ell$ with $\ell \mid q+1$.} irreducible.
For instance,
this holds if $\Char(\bbF) = 0$ or if $\Char(\bbF) = \Char(k)$. 
Steinberg \cite{SteinbergFirst} initially proved this for $\bG = \GL_n$, and then
generalized it to many other finite groups \cite{SteinbergSecond, SteinbergThird}.
Curtis \cite{CurtisBN} proved the ultimate version for a finite group with a BN-pair.
See \cite{HumphreysSurvey, SteinbergSurvey} for surveys of the fundamental 
role the Steinberg representation plays in the representation theory of finite groups
of Lie type.

\begin{remark}
The above papers predate the definition of the Steinberg representation in terms of the Tits building,
which first appeared in \cite{SolomonTits}.
\end{remark}

\subsubsection{Infinite fields}
For infinite fields $k$, the representation $\St(\bG;\bbF)$
is usually\footnote{The only time when $\St(\bG;\bbF)$ is finite-dimensional
for infinite $k$ is when $\bG$ is anisotropic, in which case $\St(\bG;\bbF) = \bbF$ is the trivial representation.} 
an infinite-dimensional representation
 of the infinite group $\bG(k)$.
In this context, it first appeared in work of Borel--Serre \cite{BorelSerreCorners}, who proved that
for algebraic number fields $k$
the symmetric space associated to the Lie group $\bG(k \otimes_{\Q} \R)$ has a $\bG(k)$-equivariant bordification
whose boundary is homotopy equivalent to $\Tits(\bG)$.  They used this to show that $\St(\bG;\bbF)$ is the ``dualizing module''
for arithmetic subgroups of $\bG(k)$.  This gave rise to a large literature using $\St(\bG;\bbF)$ to study
the cohomology of such arithmetic subgroups.  Some 
representative papers include 
\cite{AshGunnellsMcConnell, AshRudolph, Bykovskii, ChurchFarbPutman, ChurchPutmanCodimOne,
GunnellsSymplectic, KupersMillerPatztWilson, LeeSzczarba, MillerNagpalPatzt, MillerPatztPutman,
MillerPatztWilsonYakasi, PutmanStudenmund}.

A second important context for $\St(\bG;\bbF)$ when $k$ is infinite is algebraic K-theory, where
Quillen \cite{QuillenFG} constructed a spectral sequence converging to the algebraic K-theory of a number ring
$\cO$ whose $E^2$-page involves the homology of $\GL_n(\cO)$ with coefficients in the
Steinberg representation.  His main application was to show that these K-groups are finitely generated.
The Steinberg representation and related objects have since appeared in a variety of K-theoretic and
homotopy theoretic contexts.
Some representative papers include \cite{
AroneDwyer,
GalatiusKupersRandalWilliams1,
GalatiusKupersRandalWilliams2,
MitchellPriddy,
RognesRank,
RognesK4,
SikiricElbazKupersMartinet,
SouleK4}

\subsubsection{Irreducibility for infinite fields}

It is natural to wonder whether $\St(\bG;\bbF)$ is irreducible when $k$ is infinite.
This was first studied by Xi \cite{XiSteinberg}, who proved
that $\St(\bG;\bbF)$ is irreducible when $\bG$ is defined over the algebraic
closure $k=\overline{\bbF}_q$ of a finite field $\bbF_q$ and $\bbF$ is a field with
$\Char(\bbF) \in \{0, \Char(k)\}$.  Yang \cite{YangSteinberg} later removed this
restriction on $\Char(\bbF)$.  Both proofs make essential use of
the fact that $k$ is a union of finite fields, and do not appear to
generalize to more general fields $k$.
More recently, Galatius--Kupers--Randal-Williams \cite{GalatiusKupersRandalWilliams2}
proved that $\St(\GL_n;\bbF)$ is an indecomposable\footnote{An indecomposable representation is one that cannot
be decomposed as a nontrivial direct sum of two subrepresentations.  When $k$ is infinite, the Steinberg representation
is typically infinite-dimensional and this is a weaker condition than being irreducible even when $\Field$ has characteristic
$0$.} representation of $\GL_n(k)$ for all fields $k$ and $\Field$.

\subsection{Main theorem}

Our main theorem answers this question completely.

\begin{maintheorem}
\label{maintheorem:irreducible}
Let $\bG$ be a connected reductive group over an infinite field $k$ and let $\bbF$ be an arbitrary
field. Then the Steinberg representation $\St(\bG;\bbF)$ is an irreducible $\bG(k)$-representation.
\end{maintheorem}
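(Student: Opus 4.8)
The plan is to take an arbitrary nonzero $\bG(k)$-submodule $W \subseteq \St(\bG;\bbF)$ and prove $W = \St(\bG;\bbF)$, arguing by induction on the semisimple $k$-rank $r$ of $\bG$. First the reductions. If $r = 0$ then $\bG$ is anisotropic and $\St(\bG;\bbF) = \bbF$ is irreducible, so assume $\bG$ is isotropic; standard reductions (replace $\bG$ by the simply connected cover of its derived group: the building is unchanged and the acting group only shrinks, so the resulting statement is stronger) let us assume $\bG$ is semisimple and simply connected. By the Solomon--Tits theorem, $\St(\bG;\bbF) = \RH_{r-1}(\Tits(\bG);\bbF)$ is spanned by the fundamental classes $[A_\bS]$ of the apartments $A_\bS \subseteq \Tits(\bG)$ attached to the maximal $k$-split tori $\bS$, and $\bG(k)$ permutes these classes transitively because all maximal $k$-split tori are $\bG(k)$-conjugate. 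Hence $W = \St(\bG;\bbF)$ as soon as $W$ contains a single $[A_\bS]$. Finally, since $\Tits(\bG)$ has dimension $r - 1$ we have $\St(\bG;\bbF) = Z_{r-1}(\Tits(\bG);\bbF)$ (there are no boundaries to divide out), and a cycle whose chain support lies inside the triangulated $(r-1)$-sphere $A_\bS$ is automatically a scalar multiple of $[A_\bS]$. So it is enough to produce, inside any nonzero $W$, a nonzero cycle supported in a single apartment.

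The base case $r = 1$ is the delicate heart of the matter. Here $\Tits(\bG)$ is the discrete $\bG(k)$-set $X = \bG(k)/\bP_0(k)$ of minimal parabolic $k$-subgroups, $\St(\bG;\bbF)$ is the augmentation submodule of the permutation module $\bbF[X]$, and an apartment class is just a difference $[x] - [y]$. The Bruhat decomposition shows that $\bG(k)$ acts $2$-transitively on $X$ and that the point stabilizer $\bP_0(k)$ acts transitively on the complement of its fixed point through the unipotent radical $R_u(\bP_0)(k)$, which is infinite because $k$ is. Given $0 \neq v \in W$, one wants a two-term element $[x]-[y]\in W$. A naive ``minimize the support'' induction does not work --- subtracting a group translate of $v$ typically enlarges the support --- so the argument must instead use the infinitude of $k$ to choose group elements in ``general position'': fix two points of the support, move the rest by the infinite torus fixing those two, and avoid the finitely many coincidences that would reintroduce cancellation. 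This is the one place where the hypothesis that $k$ is infinite is genuinely used, in accordance with the fact that $\St(\bG;\bbF)$ really can be reducible over finite fields, and then only for rank-one groups.

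For the inductive step $r \geq 2$ I use the links of the vertices of $\Tits(\bG)$. If $p$ is the vertex of a maximal parabolic $k$-subgroup, with Levi decomposition $\bP_p = \bL_p \ltimes R_u(\bP_p)$, then $\operatorname{lk}_{\Tits(\bG)}(p) \cong \Tits(\bL_p)$ with $\bL_p$ of semisimple $k$-rank $r - 1$, and restricting a top-dimensional cycle $z$ of $\Tits(\bG)$ to the star of $p$ --- a cone on the link --- and reading off the resulting cycle on the base gives a ``localization'' map $\partial_p \colon \St(\bG;\bbF) \to \St(\bL_p;\bbF)$. It is equivariant for the surjection $\bP_p(k) \to \bL_p(k)$ coming from the Levi decomposition (so $R_u(\bP_p)(k)$ acts trivially on the target), one has $\partial_p(z) \neq 0$ as soon as $p$ lies in some chamber of the support of the nonzero cycle $z$, and $\partial_p([A_\bS]) = \pm [A_\bS^{\bL_p}]$ whenever $p \in A_\bS$. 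Since $W$ is in particular $\bP_p(k)$-stable, $\partial_p(W)$ is an $\bL_p(k)$-submodule of $\St(\bL_p;\bbF)$; it is nonzero --- translate a nonzero element of $W$ so that its support meets a chamber at $p$ --- hence equals $\St(\bL_p;\bbF)$ by the inductive hypothesis (applicable since the Levi has smaller rank and $k$ is still infinite). Thus $\partial_p(W) = \St(\bL_p;\bbF)$ for \emph{every} vertex $p$. It remains to glue these local surjectivities into the desired conclusion. The natural route is: take $0 \neq w \in W$ of minimal chain support; use minimality together with a strengthened inductive hypothesis (``a nonzero cycle of minimal support is a scalar multiple of an apartment class'') to see that $\partial_p(w)$ is a scalar multiple of an apartment class of $\bL_p$ for each vertex $p$ of the support; and then apply the local characterization of apartments in a building --- a subcomplex all of whose vertex links are apartments is itself an apartment --- to conclude that the support of $w$ is an apartment, so that $w$ is a scalar multiple of some $[A_\bS]$, finishing the induction.

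I expect the main obstacle to be precisely this gluing step, hand in hand with isolating the right form of the rank-one argument that feeds it: one wants a robust statement, something like ``every nonzero cycle of minimal support is a scalar multiple of an apartment class'', strong enough to propagate through the induction and carrying within it the essential use of $|k| = \infty$ at the rank-one stage. Turning ``$\partial_p(W) = \St(\bL_p;\bbF)$ for all vertices $p$'' into ``$W = \St(\bG;\bbF)$'' --- i.e.\ understanding how the kernels $\Ker(\partial_p)$ sit inside $\St(\bG;\bbF)$ and how to reassemble the link-level data into a genuine global spherical cycle lying in $W$ --- is where the combinatorics of the building will carry the weight of the argument.
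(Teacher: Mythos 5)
Your outline correctly identifies where the difficulty lies but does not solve it, and the two load-bearing steps you defer are genuine gaps, not routine verifications. The rank-one base case is essentially the entire theorem: as the introduction notes, irreducibility over general infinite $k$ was previously open already for $\GL_2$. Your ``general position'' heuristic does not produce a descent --- picking $t$ in the torus fixing two points of the support of $v$ so that $t$ moves the remaining points off themselves gives $v - t\cdot v$, whose support is typically \emph{larger}, and there is no obvious way to iterate this to a two-term element. The paper's actual solution of this step occupies Sections~\ref{section:buildingsaugmentation}--\ref{section:charp} and uses machinery entirely disjoint from building combinatorics: one realizes $\St(\bG;\bbF)\cong\bbF[\bU(k)]$ equivariantly for $\bU(k)$ and $\bT(k)$, reduces irreducibility to a statement about $\bT(k)$-stable left ideals of the group ring $\bbF[\bU(k)]$ (Proposition~\ref{prop:ideal}), and proves that statement (Theorem~\ref{maintheorem:positiveideal}) using positive $\bbG_m$-actions on unipotent groups together with Hall's residual-finiteness theorem for abelian-by-nilpotent groups when $\Char k = 0$ and the Chevalley--Warning theorem when $\Char k = p$. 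Nothing in your proposal replicates or replaces this ingredient.

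The gluing step in your induction is also unresolved. Knowing that $\partial_p(W) = \St(\bL_p;\bbF)$ for every vertex $p$ does not, by itself, force $W = \St(\bG;\bbF)$: the map $\partial_p$ is equivariant only for $\bP_p(k)$ acting through its Levi quotient $\bL_p(k)$, so an element of $\bL_p(k)$ that the inductive hypothesis produces cannot be lifted to an element of $\bG(k)$ acting on the original $w\in W$. Your proposed repair --- take $w\in W$ of minimal chain support, argue that each $\partial_p(w)$ is already a scalar multiple of an apartment class, then invoke a local-to-global characterization of apartments --- does not close either: (i) the inductive hypothesis only gives that $\partial_p(W)$ is all of $\St(\bL_p;\bbF)$, not that the particular element $\partial_p(w)$ has apartment support, and minimality of the support of $w$ within $W$ does not force this; and (ii) the local characterization of apartments is a statement about chamber subcomplexes of the building with suitable convexity properties, not about supports of arbitrary chains with coefficients, and matching coefficients and orientations would require additional work even if the set-theoretic claim were available. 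By contrast, the paper never inducts on rank: the only building-theoretic input it needs is Proposition~\ref{prop:augment} (one can translate any nonzero $x$ so that its $\bB$-coefficient is nonzero), which follows immediately from transitivity on chambers and the Solomon--Tits basis. All of the remaining weight is carried by the group-ring analysis of $\bbF[\bU(k)]$, for which your outline has no counterpart.
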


\begin{remark}
For a local field $k$, there is a variant of the Steinberg representation
that takes into account the topology of $k$ (see, e.g., \cite{BorelSerreSArithmetic}).
This variant is irreducible (see \cite[Example~7.6]{PrasadRaghuram} or \cite[Example~9.2]{Zelevinsky}), but it is different 
enough from the ordinary Steinberg representation that this does not seem to
imply Theorem~\ref{maintheorem:irreducible} when $\bG$ is defined over a local field.
\end{remark}

\begin{remark}
There are a number of other representations that are similar to the Steinberg representation. For example, when $k$ is finite and $\bbF$ has characteristic~0, there is one irreducible unipotent representation of $\GL_n(k)$ for each partition of $n$, with the Steinberg representation corresponding to the partition $(1^n)$. Our methods should be able to prove irreducibility for these analogous representations, though we have not pursued this.
\end{remark}

\subsection{The case where \texorpdfstring{$k$}{k} is finite}

In most cases, our proof uses the fact that $k$ is infinite in a serious way.  Since
$\St(\bG;\bbF)$ is sometimes reducible when $k$ is finite, this is inevitable.  However,
our work does give the most important special cases of irreducibility for finite $k$.
Let $k$ be a finite field with $\Char(k) = p$.
\begin{itemize}
\item When $\Char(\bbF) = p$, our proof works in complete
generality and many aspects of it simplify.\footnote{In particular, the second
ingredient (Proposition \ref{prop:ideal}) in the proof outline discussed in
\S \ref{section:outline} is almost trivial in this case; see the
very short \S \ref{section:charpequal} for details.}
\item The case where $\Char(\bbF) = 0$ follows from the
case where $\Char(\bbF) = p$.  To see this, observe that
$\St(\bG;\Z)$ is a finite-rank free abelian group with
$\St(\bG;\overline{\bbF}_p) = \St(\bG;\Z) \otimes \overline{\bbF}_p$
and $\St(\bG;\bbF) = \St(\bG;\Z) \otimes \bbF$.  We now quote the following standard
result:\footnote{Here is a quick proof.  If $M \otimes \bbF$ is reducible
for some $\bbF$ with $\Char(\bbF) = 0$, then letting $\overline{\bbF}$ be an
algebraic closure of $\bbF$ we have that $M \otimes \overline{\bbF}$ is reducible.  Since
they have the same characters, this
implies that $M \otimes \overline{\bbL}$ is reducible for any algebraically closed field
$\overline{\bbL}$ with $\Char(\overline{\bbL}) = 0$; in 
particular, $M \otimes \overline{\Q}_p$ is reducible.  Let
$V \subset M \otimes \overline{\Q}_p$ be a nonzero proper subrepresentation.  Letting $\overline{\Z}_p$ be the
ring of integers in $\overline{\Q}_p$, the intersection $V \cap (M \otimes \overline{\Z}_p)$ is a nonzero proper direct
summand of the $\overline{\Z}_p$-module $M \otimes \overline{\Z}_p$, and hence maps to a nonzero proper subrepresentation of $M \otimes \overline{\bbF}_p$
under the reduction map $M \otimes \overline{\Z}_p \rightarrow M \otimes \overline{\bbF}_p$.}
if $G$ is a finite group,
$M$ is a $\Z[G]$-module whose underlying abelian group is finite-rank and free, and
$M \otimes \overline{\bbF}_p$ is irreducible, then $M \otimes \bbF$ is irreducible for all
fields $\bbF$ with $\Char(\bbF) = 0$.
\end{itemize}

\subsection{Outline of proof of Theorem~\ref{maintheorem:irreducible}}
\label{section:outline}

Let the notation be as in Theorem~\ref{maintheorem:irreducible}.
Let $\bB$ be a minimal parabolic $k$-subgroup of $\bG$, let $\bU$ be the unipotent radical of $\bB$, and let 
$\bT$ be a maximal $k$-split torus in $\bB$. For example, if $\bG=\GL_n$ then one can take 
$\bB$ to be the Borel subgroup of upper triangular matrices, $\bU$ to be group of upper triangular matrices with $1$'s on the diagonal, 
and $\bT$ to be the group of diagonal matrices. 

A strengthening of the Solomon--Tits theorem gives a linear isomorphism 
$\iota \colon \St(\bG; \bbF) \to \bbF[\bU(k)]$.  The map $\iota$ is equivariant for $\bU(k)$ 
and $\bT(k)$, which act by left multiplication and conjugation on the target, respectively.
However, the action of a general element of $\bG(k)$ on $\bbF[\bU(k)]$ is opaque.
The actions of $\bU(k)$ and $\bT(k)$ on $\bbF[\bU(k)]$ preserve the augmentation ideal, i.e., the kernel of the augmentation $\epsilon\colon \bbF[\bU(k)] \rightarrow \bbF$.
We will prove the following:

\begin{proposition}
\label{prop:augment}
Let the notation be as above and let $x \in \St(\bG;\bbF)$ be non-zero.  Then there exists $g \in \bG(k)$ such that $\iota(gx)$ is
not in the augmentation ideal.
\end{proposition}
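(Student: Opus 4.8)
The strategy is to exploit the $\bU(k)$- and $\bT(k)$-equivariance of $\iota$ to reduce the problem to a statement about the group algebra $\bbF[\bU(k)]$, and then to use an element of $\bG(k)$ outside $\bB(k)$ to "move" a nonzero class out of the augmentation ideal. Since $\iota$ is $\bU(k)$-equivariant for left multiplication, acting by elements of $\bU(k)$ on $x$ translates $\iota(x)$ around inside $\bbF[\bU(k)]$; this alone cannot change whether the element lies in the augmentation ideal, because left multiplication by a group element preserves $\epsilon$. So the content must come from elements of $\bG(k)$ that genuinely mix up the $\bU(k)$-orbit structure — in the $\GL_n$ case, a permutation matrix or, more to the point, a representative of a nontrivial element of the relative Weyl group.

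\smallskip

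First I would fix a nonzero $x\in\St(\bG;\bbF)$ and write $\iota(x)=\sum_{u\in S}c_u\,[u]$ with $S\subseteq\bU(k)$ finite and all $c_u\neq 0$; the goal is to produce $g$ with $\epsilon(\iota(gx))\neq 0$, i.e.\ $\sum c_u\neq 0$ after replacing $x$ by $gx$. If $\epsilon(\iota(x))\neq 0$ already, take $g=1$. Otherwise I would proceed by induction on $|S|$, the size of the support. The key reduction: using left-translation by some $u_0^{-1}\in\bU(k)$ (which is harmless for the augmentation-ideal question but normalizes the support) we may assume $1\in S$. Now I want to find $w\in\bG(k)$ — ideally a lift of a nontrivial element of the relative Weyl group $W=N_{\bG}(\bT)(k)/Z_{\bG}(\bT)(k)$ — such that $\iota(wx)$ has strictly smaller support, or more precisely such that after a further $\bU(k)$-translation the support of $\iota(wx)$ drops. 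Iterating, the support shrinks until it has size $1$, at which point $\iota$ of the relevant translate is a single nonzero scalar multiple of $[1]$, which is manifestly not in the augmentation ideal.

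\smallskip

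To make the induction step work I would need to understand the action of a Weyl-group representative $w$ on $\bbF[\bU(k)]$ at least partially. Here the relevant structural input is the Bruhat-type decomposition: for a simple reflection $s$, the product $\bU(k)\cdot w_s\cdot\bU(k)$ together with the "opposite" piece governs how $w_s$ acts on the image $\iota(\St(\bG;\bbF))=\bbF[\bU(k)]$. Concretely, decomposing $\bU=\bU_s\ltimes\bU_s'$ where $\bU_s$ is the root subgroup for the simple root $\alpha_s$, the action of $w_s$ on classes $[u]$ with $u\in\bU_s$ is computed inside the rank-one group generated by $\bU_s$ and $w_s\bU_sw_s^{-1}$ — essentially an $\SL_2$ or $\PGL_2$ calculation. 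In that rank-one situation, $w_s$ sends the class $[1]$ (coming from a chamber) to a $\bbF$-linear combination of $[u]$ over a large (infinite, since $k$ is infinite) set of $u\in\bU_s(k)$, and crucially one can choose the chamber so that the "long" element of the rank-one Weyl group collapses a prescribed chamber to a single opposite chamber. The infinitude of $k$ is what guarantees enough room to separate the finitely many elements of $S$ and arrange a strict drop.

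\smallskip

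\textbf{The main obstacle.} The delicate point is the inductive step: showing that for a nonzero $x$ whose image is in the augmentation ideal there is \emph{some} $g\in\bG(k)$ strictly decreasing the support of $\iota(gx)$ (up to $\bU(k)$-translation). This requires a genuine understanding of how elements outside $\bB(k)$ act on $\bbF[\bU(k)]\cong\St(\bG;\bbF)$, which the excerpt explicitly flags as "opaque." I expect the right tool is to go back to the Tits building: realize $x$ as an actual cycle supported on the apartments through a fixed chamber $C_0$ (the chamber corresponding to $\bB$), use that $\bU(k)$ acts simply transitively on chambers opposite a fixed chamber, and then pick $g$ so that $gC_0$ is a chamber adjacent — across a panel $\pi$ of type $s$ — to $C_0$; the class $gx$ is then supported on apartments through $gC_0$, and projecting/retracting toward the panel $\pi$ (a combinatorial retraction of the building) converts the support-reduction into a statement about the residue of $\pi$, which is the building of a rank-one group over $k$. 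Making this retraction argument precise, and in particular controlling the coefficients so that no cancellation forces the image back into the augmentation ideal, is where the real work lies; the infinitude of $k$ enters precisely to ensure the rank-one residue has enough chambers to avoid accidental cancellation.
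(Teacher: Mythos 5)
Your proposal has a genuine gap, and more importantly it misses the short observation that makes the statement essentially immediate.

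The paper's proof does not induct on support size and does not analyze how Weyl group elements act on $\bbF[\bU(k)]$ (an analysis you correctly flag as the ``opaque'' hard part of your plan and never carry out). Instead it observes that the composition $\epsilon\circ\iota\colon\St(\bG;\bbF)\to\bbF$ has a clean description in terms of the building: for $x\in\St(\bG;\bbF)\subset\bbF[\minPara]$, the number $\epsilon(\iota(x))$ is exactly the coefficient of the chamber $\bB$ in $x$. This is Lemma~\ref{lemma:describeaug}, and it is checked on the basis $\apart_{\bB}$ in one line, since each such apartment class has $\bB$-coefficient $1$ and maps under $\alpha^{-1}$ to a single group element of $\bU(k)$, which has augmentation $1$. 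Once this is in hand, Proposition~\ref{prop:augment} follows by pure transitivity: $x\neq 0$ has some nonzero chamber coefficient, and $\bG(k)$ acts transitively on $\minPara$, so some $g$ moves that nonzero coefficient to the $\bB$-slot. No Weyl element bookkeeping, no Bruhat decomposition, no retractions, no induction.

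The concrete problems with your plan are two. First, the inductive step (``find $g$ strictly decreasing $|S|$'') is the entire content of the argument and you do not prove it; it is not clear it holds in the generality you would need, and establishing it via retractions to rank-one residues would be substantially harder than the whole paper's proof. Second, you invoke the infinitude of $k$ (``the infinitude of $k$ is what guarantees enough room to separate the finitely many elements of $S$''), but Proposition~\ref{prop:augment} is true and is proved without any hypothesis on $k$; the infinitude of $k$ enters only in Proposition~\ref{prop:ideal}. That you need it here is a sign the approach is not on the right track: once you re-express $\epsilon\circ\iota$ as the $\bB$-coefficient functional, the statement no longer resembles a separation-of-points problem at all.
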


\begin{proposition}
\label{prop:ideal}
Let the notation be as above and
let $I \subset \bbF[\bU(k)]$ be a left ideal that is stable under $\bT(k)$ and 
not contained in the augmentation ideal. Then $I=\bbF[\bU(k)]$.
\end{proposition}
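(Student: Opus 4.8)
The plan is to exploit the action of $\bT(k)$ on $\bbF[\bU(k)]$ to show that any $\bT(k)$-stable left ideal not contained in the augmentation ideal must contain the identity element $1 \in \bU(k)$, which forces $I = \bbF[\bU(k)]$. Pick any $v = \sum_{u \in \bU(k)} c_u \cdot u \in I$ with nonzero augmentation $\epsilon(v) = \sum_u c_u \neq 0$; after scaling we may take $\epsilon(v) = 1$. The key point is that $\bU(k)$ carries a filtration by a descending central series $\bU = \bU_1 \supseteq \bU_2 \supseteq \cdots \supseteq \bU_m = 1$ by $\bT$-stable normal $k$-subgroups, with each successive quotient $\bU_i/\bU_{i+1}$ isomorphic (as a $k$-group with $\bT$-action) to a direct sum of root subgroups $\bU_\alpha$, each of which is a vector group on which $\bT(k)$ acts through the nontrivial character $\alpha$. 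Since $k$ is infinite, each character $\alpha \colon \bT(k) \to k^\times$ has infinite (indeed non-bounded) image, and one can find $t \in \bT(k)$ with $\alpha(t) \neq 1$ for every root $\alpha$ occurring in $\bU$.

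First I would record the following averaging/contraction principle. For $t \in \bT(k)$, conjugation by $t$ acts on $\bbF[\bU(k)]$ as an algebra automorphism $\phi_t$, and since $I$ is $\bT(k)$-stable and a left ideal, for any $w \in I$ and any $u_0 \in \bU(k)$ the element $u_0 \cdot \phi_t(w)$ lies in $I$. The strategy is to use elements of the form $(u - 1)$ acting on the left, together with conjugations, to successively "kill" the terms of $v$ supported outside $\bU_i$, descending the filtration. Concretely, working modulo the ideal generated by elements supported in deeper filtration steps, on the top quotient $\bU/\bU_2$ — a $\bT(k)$-module that is a sum of root spaces — one shows that the image of $I$ in the group algebra of this abelian quotient is $\bT(k)$-stable and contains an element of augmentation $1$; and in the group algebra $\bbF[A]$ of an abelian group $A = \bigoplus \bU_\alpha(k)$ with the $\bT(k)$-action, a $\bT(k)$-stable ideal with an augmentation-$1$ element must be everything. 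The mechanism for the last fact: if $a \in A$ occurs in $w \in I$ with nonzero coefficient, choose $t$ with $\alpha$-eigenvalue moving $a$, form the difference $w - \phi_t(w)$ or a suitable $\bbF$-combination $\sum_j \lambda_j \phi_{t^j}(w)$ to isolate and cancel terms; over an infinite field one has enough distinct eigenvalues (a Vandermonde argument) to project onto the isotypic piece of $1 \in A$, leaving a nonzero scalar multiple of $1$ in the image of $I$.

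Then I would lift this back up the filtration: having produced $w \in I$ congruent to $1$ modulo the left ideal $J$ generated by $\{u - 1 : u \in \bU_2(k)\}$-type relations (equivalently, modulo the augmentation ideal of $\bbF[\bU_2(k)]$ inside $\bbF[\bU(k)]$), one has $w = 1 + (\text{stuff in } J)$, and then $J$ itself is handled by induction on the nilpotency class of $\bU$ applied to the $\bT(k)$-stable left ideal $I \cap \bbF[\bU_2(k)]\text{-part}$ — or more cleanly, one runs the whole argument by induction on $m$, the length of the central filtration, with the base case $\bU$ abelian being the Vandermonde computation above. The main obstacle I anticipate is bookkeeping the interaction between the left-ideal structure (multiplication by $u_0 - 1$ on the left, which does \emph{not} respect the central filtration in an obvious graded way) and the $\bT(k)$-conjugation action (which does): one must be careful that the "lower-order terms" generated when peeling off the top graded piece genuinely lie in the sub-object handled by the inductive hypothesis, and that the Vandermonde/eigenvalue-separation step only needs $k$ infinite (so that $\alpha(\bT(k)) \subseteq k^\times$ is infinite for each root $\alpha$) rather than any stronger hypothesis. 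A secondary subtlety is the case $\Char(\bbF) = p = \Char(k)$, where one cannot invert $p$; but here the eigenvalue-separation argument still works since it only uses that $k^\times$ contains elements of infinite multiplicative order or at least enough distinct elements, and one picks $t$ avoiding the finitely many bad relations — and indeed the excerpt's footnote signals that this case is essentially trivial, presumably because $\bbF[\bU(k)]$ is then a local ring with maximal ideal the augmentation ideal, so any ideal not contained in it is the whole ring.
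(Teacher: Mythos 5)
Your scaffolding is on the right track and matches the paper's: decompose $\bU$ via a $\bT$-stable central series into root subgroups (cf.\ Propositions \ref{proposition:centerpositive} and \ref{proposition:splitpositivechar0}), induct on the length of that filtration, and observe that when $\Char(\bbF)=p=\Char(k)$ the augmentation ideal of $\bbF[\Gamma]$ is the Jacobson radical for finite $p$-subgroups $\Gamma$, so the claim is immediate (Proposition~\ref{prop:unipotentgroupringeasy}). The paper also first replaces the $\bT$-action by a positively-acting one-parameter subgroup (Proposition~\ref{prop:radicalpositive}), but that is a structural point, not the crux.

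The genuine gap is the ``Vandermonde / eigenvalue-separation'' step in the abelian base case; as stated it does not work. Conjugation by $t\in\bT(k)$ sends the basis vector $[\lambda]\in\bbF[A]$ to the different basis vector $[\alpha(t)\lambda]$; the $[\lambda]$ with $\lambda\neq 0$ are \emph{not} eigenvectors, and the $\bT(k)$-orbit of $[\lambda]$ is infinite when $k$ is infinite. Consequently, for any fixed $t$ of infinite multiplicative order and any fixed $\lambda\neq 0$, the elements $[t^j\lambda]$ for distinct $j$ are linearly independent over $\bbF$, so no finite $\bbF$-linear combination $\sum_j a_j\phi_{t^j}(w)$ can cancel a term of the form $c\,[t^j\lambda]$ without being identically zero on it; there is no $\bT(k)$-equivariant projection of $\bbF[A]$ onto its $[0]$-line implementable by finite combinations of conjugates. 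The closest thing to your intuition that actually works is the Newton-identity device recorded in Remark~\ref{remark:concrete}, but note that this relies essentially on conjugating by \emph{integers} $d$, under which $[d\lambda]=[\lambda]^d$ becomes a genuine $d$-th power in the group ring; this interpretation is special to characteristic zero (integers inject into $k^\times$), and even there it only directly handles a single root subgroup, after which one must still glue the root subgroups together across the non-abelian filtration.

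What the paper actually does is quite different and is the real content. In characteristic~$0$ it reduces to a finitely generated subgroup $\Gamma\subset\bU(k)$ with $\presup{d}{g}\in\Gamma(d)$ (Proposition~\ref{prop:constructgammachar0}), and then uses Hall's theorem on residual finiteness of finitely generated abelian-by-nilpotent groups (Theorem~\ref{theorem:hall}, via Corollary~\ref{corollary:separatenil}): in any finite-index quotient, the subgroup of high $d$-th powers acts trivially, forcing $m_1+\cdots+m_n$ into every finite-index submodule. In the case $\Char(k)=p\neq\Char(\bbF)$, where $\Z$-conjugations collapse to $\bbF_p$ and there is no cheap supply of commuting ``power'' operators, the paper introduces A-polynomials and applies Chevalley--Warning (Proposition~\ref{prop:apolynomial}) to engineer small subgroups $\bU(S,\fc)$ on which the relevant coinvariants are nonzero (Proposition~\ref{prop:positiveidealcorep}); one also needs that the positive $\bbG_m$-action extends to the monoid $\overline{\bbG}_m$ (Proposition~\ref{proposition:extendpositive}). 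Your claim that ``the eigenvalue-separation argument still works since it only uses that $k^{\times}$ contains $\ldots$ enough distinct elements'' is exactly where this breaks: the characteristic-$p$ case is the hard one, and it is handled in the paper by a genuinely new mechanism, not by separating eigenvalues.
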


To deduce Theorem~\ref{maintheorem:irreducible}, let $V \subset \St(\bG;\bbF)$ be a non-zero subrepresentation.
Then $\iota(V)$ is a $\bT(k)$-stable left ideal of $\bbF[\bU(k)]$ that by Proposition~\ref{prop:augment} is
not contained in the augmentation ideal.  Proposition~\ref{prop:ideal} thus implies that $\iota(V)=\bbF[\bU(k)]$, 
so $V=\St(\bG; \bbF)$.

\subsection{Special case of Proposition~\ref{prop:augment}}

To prove Proposition \ref{prop:augment}, we must relate the augmentation map $\epsilon \colon \bbF[\bU(k)] \to \bbF$
to the structure of the Tits building $\Tits(\bG)$.  Doing this in general requires introducing a lot
of building-theoretic terminology (chambers, apartments, etc.).  To give the basic idea, we will
explain how this works for $\bG = \GL_2$.

\subsubsection{Structure of building}
We must first construct $\iota$.  The parabolic subgroups of $\GL_2$ are the stabilizers of lines in $k^2$.  The
Tits building $\Tits(\GL_2)$ can thus be identified with the discrete set $\bbP^1(k)$.
Elements of $\HH_0(\Tits(\GL_2);\bbF)$ are
formal $\bbF$-linear combinations of points of $\bbP^1(k)$.  The Steinberg representation is
the reduced homology:
\[\St(\GL_2;\bbF) = \RH_0(\Tits(\GL_2);\bbF) = \Set{$\sum_{i=0}^n c_i \ell_i$}{$\ell_i \in \bbP^1(k)$, $c_i \in \bbF$, $\sum_{i=0}^n c_i = 0$}.\]

\subsubsection{Apartment classes}
The Steinberg representation is spanned by elements of the form $\ell - \ell'$ for distinct $\ell,\ell' \in \bbP^1(k)$,
which are called {\em apartment classes}.  These are not linearly independent.  Using homogeneous
coordinates on $\bbP^1(k)$, the apartment classes of the form $[1,0]-[\lambda,1]$ are a basis
for $\St(\GL_2;\bbF)$.
Let $\bU$ be the unipotent subgroup of upper triangular $2 \times 2$ matrices with $1$'s on the diagonal. We thus have an isomorphism
\[\iota\colon \St(\GL_2;\bbF) \rightarrow \bbF[\bU(k)], \qquad \iota\left([1,0] - [\lambda,1]\right) = \left(\begin{matrix} 1 & \lambda \\ 0 & 1 \end{matrix}\right).\]

\subsubsection{Making the augmentation nonzero}
Let $\epsilon\colon \bbF[\bU(k)] \rightarrow \bbF$ be the augmentation. Consider a nonzero $x \in \St(\GL_2;\bbF)$.  
We must find some $g \in \GL_2(k)$ with $\epsilon(\iota(gx)) \neq 0$.  Write
\[x = \sum_{i=0}^n c_i \ell_i \quad \text{with $\ell_i \in \bbP^1(k)$, $c_i \in \bbF$, and $\sum_{i=0}^n c_i = 0$}\]
with the $\ell_i$ all distinct and the $c_i$ all nonzero.  Pick 
$g \in \GL_2(k)$ with $g \ell_0 = [1,0]$.  For $1 \leq i \leq n$, write
$g \ell_i = [\lambda_i,1]$ with $\lambda_i \in k$.  Since $\sum_{i=0}^n c_i = 0$, it follows that
\[gx = c_0 [1,0] + \sum_{i=1}^n c_i [\lambda_i,1] = \sum_{i=1}^n -c_i \left([1,0] - [\lambda_i,1]\right),\]
so 
\[\epsilon(\iota(gx)) = \epsilon\left(\sum_{i=1}^n -c_i \left(\begin{matrix} 1 & \lambda_i \\ 0 & 1 \end{matrix}\right)\right) = \sum_{i=1}^n -c_i = c_0 \neq 0.\qedhere\]

\subsection{Special case of Proposition~\ref{prop:ideal}} 
\label{section:specialideal}

We now explain our proof of Proposition~\ref{prop:ideal} for $\bG = \GL_2$ when $\Char(k)=0$.
Here $\bU$ is the unipotent group of upper triangular $2 \times 2$ matrices with $1$'s on the diagonal and
$\bT$ is the group of $2 \times 2$ diagonal matrices.
Let $I \subset \bbF[\bU(k)]$ be a left ideal that is stable under the conjugation action of $\bT(k)$ and not contained in
the augmentation ideal.  We must prove that $I = \bbF[\bU(k)]$.

\subsubsection{Torus action}
Since $I \not\subset \ker(\epsilon)$, we can find $x \in I$ with $\epsilon(x) = 1$.  Write this as
\[x = \sum_{i=1}^n c_i \left(\begin{matrix} 1 & \lambda_i \\ 0 & 1\end{matrix}\right) \in I \quad \text{with $\lambda_1,\ldots,\lambda_n \in k$, $c_1,\ldots,c_n \in \bbF$, and $\sum_{i=1}^n c_i = 1$}.\]
Since $\Char(k) = 0$, the torus $\bT(k)$ contains matrices $\diag(d,1)$ for all nonzero $d \in \Z$.  We have 
\[\left(\begin{matrix} d & 0 \\ 0 & 1 \end{matrix}\right) 
\left(\begin{matrix} 1 & \lambda \\ 0 & 1 \end{matrix}\right)
\left(\begin{matrix} d & 0 \\ 0 & 1 \end{matrix}\right)^{-1}
= \left(\begin{matrix} 1 & d \lambda \\ 0 & 1 \end{matrix}\right) \quad \text{for all $\lambda \in k$}.\]
Letting $x_d \in I$ be the result of conjugating $x \in I$ by $\diag(d,1)$, we thus have
\begin{equation}
\label{eqn:writexn}
x_d = \sum_{i=1}^n c_i \left(\begin{matrix} 1 & d \lambda_i \\ 0 & 1\end{matrix}\right) \in I \quad \text{for all nonzero $d \in \Z$}.
\end{equation}

\subsubsection{Laurent polynomials}
Let $\Psi \colon \bbZ[z_1^{\pm 1},\ldots,z_n^{\pm 1}] \rightarrow \bbF[\bU(k)]$ be
the ring homomorphism defined via the formula
\begin{equation}
\label{eqn:psidefinition}
\Psi(z_1^{d_1} \cdots z_n^{d_n}) = \left(\begin{matrix} 1 & d_1 \lambda_1 + \cdots + d_n \lambda_n \\ 0 & 1 \end{matrix}\right).
\end{equation}
Use $\Psi$ to make $\bbF[\bU(k)]$ into a left module over $\bbZ[z_1^{\pm 1}, \ldots, z_n^{\pm 1}]$: for
$f \in \bbZ[z_1^{\pm 1}, \ldots, z_n^{\pm 1}]$ and $x \in \bbF[\bU(k)]$, define $f \cdot x = \Psi(f) x$.
Letting $\id \in \bU(k)$ be the identity matrix, we then have
\[x_d = \sum_{i=1}^n z_i^d \cdot (c_i \cdot \id) \in I \quad \text{for all nonzero $d \in \Z$}.\]
To prove the proposition, it is enough to show that $\id=\sum_{i=1}^n c_i \cdot \id \in I$. 

\subsubsection{Modules over Laurent polynomials}
For this, apply the following lemma with
\[M = \bbF[\bU(k)] \quad \text{and} \quad N = I \quad \text{and} \quad m_i = c_i \cdot \id .\]

\begin{lemma}
\label{lemma:polylemma}
Let $R =\bbZ[z_1^{\pm 1},\ldots,z_n^{\pm 1}]$, let $M$ be an $R$-module, let $N \subset M$ be a submodule, and let $m_1,\ldots,m_n \in M$. Assume that
$z_1^d \cdot m_1 + \cdots + z_n^d \cdot m_n \in N$ for all $d \geq 1$.  Then $m_1+\cdots+m_n \in N$.
\end{lemma}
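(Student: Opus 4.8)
The plan is to exploit the key fact that a Vandermonde-type system allows one to "isolate" each monomial from the combination $z_1^d m_1 + \cdots + z_n^d m_n$. First I would observe that the hypothesis gives, for $d = 1, 2, \ldots, n$, the $n$ elements $v_d := z_1^d \cdot m_1 + \cdots + z_n^d \cdot m_n \in N$. Writing $w_i := z_i \cdot m_i \in M$, these read $v_d = \sum_{i=1}^n z_i^{d-1}\cdot w_i$, i.e. the matrix equation $(v_1, \ldots, v_n)^{T} = A \cdot (w_1, \ldots, w_n)^{T}$ where $A = (z_i^{j-1})_{1 \le j, i \le n}$ is the Vandermonde matrix in the $z_i$. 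Its determinant is $\prod_{i < i'}(z_{i'} - z_i) \in R$, which is a unit\,---\,wait, it is not a unit in $R$. So a direct inversion does not work, and this is where the argument needs the Laurent structure.

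The fix I would use: instead of trying to invert, multiply by the adjugate and track what one gets. From $v_d = \sum_i z_i^{d-1} w_i$ one obtains, for any Laurent polynomial coefficients, that $\sum_{d=1}^n a_d v_d = \sum_{i=1}^n \bigl(\sum_{d=1}^n a_d z_i^{d-1}\bigr) w_i$ lies in $N$. I want to choose $a_1, \ldots, a_n \in R$ so that $\sum_{d=1}^n a_d z_i^{d-1} = $ (a unit times) $\delta$-like data that isolates a single $w_i$ — but since the $z_i$ are independent indeterminates this cannot be done with polynomial $a_d$. The correct approach is therefore inductive on $n$. For $n = 1$ the hypothesis with $d=1$ gives $z_1 \cdot m_1 \in N$, hence $m_1 = z_1^{-1}\cdot(z_1\cdot m_1) \in N$ since $z_1^{-1} \in R$. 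For the inductive step, consider the operator "multiplication by $z_n$" and form $v_{d+1} - z_n \cdot v_d = \sum_{i=1}^{n-1} (z_i^{d+1} - z_n z_i^d)\cdot m_i = \sum_{i=1}^{n-1} z_i^{d}(z_i - z_n)\cdot m_i \in N$ for all $d \ge 1$. Setting $m_i' := (z_i - z_n)\cdot m_i$ for $1 \le i \le n-1$, the hypothesis of the lemma holds for the $n-1$ elements $m_1', \ldots, m_{n-1}'$, so by induction $\sum_{i=1}^{n-1} m_i' \in N$, i.e. $\sum_{i=1}^{n-1}(z_i - z_n)\cdot m_i \in N$.

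Now I would combine this with the original $d=1$ relation $\sum_{i=1}^n z_i \cdot m_i \in N$. Subtracting, $\sum_{i=1}^{n} z_i\cdot m_i - \sum_{i=1}^{n-1}(z_i - z_n)\cdot m_i = z_n \cdot m_n + z_n\sum_{i=1}^{n-1} m_i = z_n\cdot\bigl(\sum_{i=1}^n m_i\bigr) \in N$. Multiplying by $z_n^{-1} \in R$ yields $\sum_{i=1}^n m_i \in N$, completing the induction. The one point to be careful about is the base of the induction and making sure the inductive hypothesis is applied to the \emph{same} lemma statement (it is: the $m_i'$ are genuine elements of $M$, $N$ is the same submodule, and the divided-difference relations hold for all $d \ge 1$, not merely $d \le n$). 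The main obstacle is really just recognizing that one should not try to invert the Vandermonde matrix directly but instead run the divided-difference / telescoping argument that replaces $m_i$ by $(z_i - z_n)\cdot m_i$; once that is in hand the computation is a couple of lines.
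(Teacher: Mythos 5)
Your proof is correct, and it takes a genuinely different route from the paper's. The paper passes to $M/N$, reduces to a finitely generated module, and then uses the fact that $R/\fa^r$ is finite for any maximal ideal $\fa$ of $R$ (so some power $z_i^d \equiv 1 \bmod \fa^r$) together with the Krull intersection theorem and localization at all maximal ideals. Your argument is instead a purely elementary induction on $n$ using the telescoping substitution $v_{d+1} - z_n v_d = \sum_{i<n} z_i^d (z_i - z_n) m_i$ and then recombining with the $d=1$ relation; the only technical point you need (and correctly use implicitly) is that $M$ and $N$ are also modules over the subring $\bbZ[z_1^{\pm1},\ldots,z_{n-1}^{\pm1}]$, so the inductive hypothesis applies to $m_i' = (z_i - z_n)\cdot m_i$. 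In fact, your proof is essentially the ``instructive exercise'' of Remark~\ref{remark:concrete} in the paper: unwinding your recursion reproduces the elementary-symmetric-function identity displayed there for $n=3$. The trade-off is that the paper's more abstract proof scales: the same Krull-plus-residual-finiteness template carries straight over to the noncommutative setting of Proposition~\ref{proposition:polylemmachar0} via Hall's theorem, whereas your manipulation depends on the Laurent variables commuting. The paper's proof also yields, as noted in its footnote, the stronger statement where the hypothesis is only required for $d$ in a subset of $\bbZ$ cofinal under divisibility; your telescoping argument needs consecutive exponents and so does not obviously give that refinement. For the lemma as literally stated, though, your proof is complete and correct.
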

\begin{proof}
Replacing $M$ by $M/N$, we can assume that $N=0$.  Also, replacing $M$ by the $R$-span of the $m_i$, we can assume that $M$ is finitely generated. Let $\fa$ be a maximal ideal of $R$ and let $r \ge 1$. Since $R/\fa^r$ is a finite ring and each $z_i$ is a multiplicative unit in $R$, we can find some $d \geq 1$ such that $z_i^d \equiv 1 \pmod{\fa^r}$ for all $1 \leq i \leq d$.  We then have
\[0 = z_1^d \cdot m_1 + \cdots + z_n^d \cdot m_n \equiv m_1 + \cdots + m_n \pmod{\fa^r},\]
so $m_1+\cdots+m_n \in \fa^r M$. Since this holds for all $r$, we see that $m_1+\cdots+m_n \in \bigcap_{r \ge 1} \fa^r M$, so by the
Krull intersection theorem\footnote{What the reference \cite[Corollary 5.4]{Eisenbud} actually proves
is that there is some $z \in \fa$ such that $1-z$ annihilates $\cap_{r \geq 1} \fa^r M$.  Since
$1-z$ is invertible in the localization $R_{\alpha}$, it follows that $\cap_{r \geq 1} \fa^r M$
maps to $0$ in $M_{\alpha}$.}
 \cite[Corollary 5.4]{Eisenbud} the element $m_1+\cdots+m_r$ maps to $0$ in the localization $M_{\fa}$.
Since this holds for all maximal ideals $\fa$, it follows that $m_1+\cdots+m_r=0$, as required.
\end{proof}

\begin{remark}
The following special case of Lemma \ref{lemma:polylemma} might clarify its content.
Consider $a_1,\ldots,a_n \in \C^{\times}$ and $b_1,\ldots,b_n \in \C$, and assume that
$a_1^d b_1 + \cdots + a_n^d b_n = 0$ for all $d \geq 1$.  Then Lemma \ref{lemma:polylemma}
implies that $b_1+\cdots+b_n = 0$.  
It is curious that we proved such a simple statement by reduction to finite characteristic\footnote{Of course, this statement can be proven directly. However, the proof of the lemma shows that the conclusion still holds if the stated condition only holds for all $d$ in a cofinal subset of $\bbZ$ (ordered by divisibility). This stronger statement is not so easy to prove by hand.}. When $\Char(k)=0$, our proof 
of Proposition~\ref{prop:ideal} makes use of a similar reduction to finite characteristic.  However,
though this might lead one to expect that the proof when $\Char(k) = p$ would be easier, in fact
our argument in characteristic $p$ is completely different and significantly more technical.
\end{remark}

\begin{remark}
\label{remark:concrete}
Our proof of Lemma \ref{lemma:polylemma} is a little abstract.  It is an instructive exercise to prove it more concretely by exhibiting
appropriate polynomial identities.  For instance, the case $n=3$ follows from the identity
\begin{align*}
z_1 z_2 z_3 (m_1+m_2+m_3) = &(z_1 z_2 + z_1 z_3 + z_2 z_3)(z_1 m_1 + z_2 m_2 + z_3 m_3) \\
                              &-(z_1+z_2+z_3)(z_1^2 m_1 + z_2^2 m_2 + z_3^2 m_3) \\
                              &+(z_1^3 m_1 + z_2^3 m_2 + z_3^3 m_3).
\end{align*}
Since the right hand side lies in $N$, the left hand side does as well. As the $z_i$ are units in $R$, we have $m_1+m_2+m_3 \in N$.
\end{remark}

\subsection{Comments on general case of Proposition~\ref{prop:ideal}}     
\label{section:generalideal}

We close the introduction by saying a few words about the general case of Proposition~\ref{prop:ideal}.  We actually
prove a more general result that applies to arbitrary unipotent groups.

\subsubsection{Key identity}
To state this, we must abstract the necessary properties of the action of $\bT$ on $\bU$.
The key identity that powered our proof of Proposition~\ref{prop:ideal} when $\bG = \GL_2$ and $\Char(k)=0$ is
\[\left(\begin{matrix} d & 0 \\ 0 & 1 \end{matrix}\right) 
\left(\begin{matrix} 1 & \lambda \\ 0 & 1 \end{matrix}\right)
\left(\begin{matrix} d & 0 \\ 0 & 1 \end{matrix}\right)^{-1}
= \left(\begin{matrix} 1 & d \lambda \\ 0 & 1 \end{matrix}\right) \quad \text{for nonzero $d \in \Z$}.\]
We could also have used the matrices $\diag(1,d^{-1})$, for which the analogous formula
is
\[\left(\begin{matrix} 1 & 0 \\ 0 & d^{-1} \end{matrix}\right) 
\left(\begin{matrix} 1 & \lambda \\ 0 & 1 \end{matrix}\right)
\left(\begin{matrix} 1 & 0 \\ 0 & d^{-1} \end{matrix}\right)^{-1}
= \left(\begin{matrix} 1 & d \lambda \\ 0 & 1 \end{matrix}\right) \quad \text{for nonzero $d \in \Z$}.\]
The choice to use $d = d^1$ or $d^{-1}$ reflects the
fact that the weights\footnote{These diagonal subgroups are each isomorphic to the multiplicative
group $\bbG_m = \GL_1$.  Recall (\cite[\S III.8.17]{BorelBook} or \cite[IV.1.1.6]{DemazureGabriel}) that if $\bbG_m$ acts algebraically and linearly on a vector space $V$, then $V$ decomposes as a direct
sum of weight spaces $V_d$, where $\bbG_m(k) = k^{\times}$ acts on $V_d$ as $t \cdot v = t^d v$ for
$t \in k^{\times}$ and $v \in V_d$.  The integers $d$ with $V_d \neq 0$ are called the {\em weights} of the action.  A similar result holds for other diagonalizable groups (e.g., the split tori $\left(\bbG_m\right)^{\times r})$), but with the integral weights replaced by characters.} of the actions of $\diag(\ast,1)$ and $\diag(1,\ast)$ on the Lie algebra $\Lie(\bU)$ of $\bU$ are $1$ and $-1$, respectively.

\subsubsection{Positive actions}
When $\dim(\bU)>1$, there may be more than one such weight.  If we try to imitate the above
proof, it turns out that we will run into trouble if there are both positive and negative weights (roughly, we
won't be able make a single ``choice of $d^1$ or $d^{-1}$'').  Composing a $\bbG_m$-action on $\bU$
with the inversion involution on $\bbG_m$ changes the signs of the weights, so we might
as well assume they are all positive as in the following: 

\begin{definition}
An action of $\bbG_m$ on a smooth connected unipotent group $\bU$ over a field $k$ is said to
be {\em positive} if the weights of the induced action of $\bbG_m$ on the Lie algebra $\Lie(\bU)$ of $\bU$ are positive.
\end{definition}

We will prove the following.

\begin{maintheorem}
\label{maintheorem:positiveideal}
Let $\bU$ be a smooth connected unipotent group over an infinite field $k$ equipped with a positive action of $\bbG_m$ and let
$\bbF$ be another field.  Let $I \subset \bbF[\bU(k)]$ be a left ideal that is stable under $\bbG_m$ and not contained in the augmentation
ideal.  Then $I = \bbF[\bU(k)]$.
\end{maintheorem}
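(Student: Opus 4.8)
The plan is to prove Theorem~\ref{maintheorem:positiveideal} by induction on $\dim(\bU)$: at each stage I would divide out a central copy of $\bbG_a$ sitting at the bottom of the $\bbG_m$-weight filtration, and then handle the resulting ``extension by a central $\bbG_a$'' by the method of \S\ref{section:specialideal}, whose engine is Lemma~\ref{lemma:polylemma} and in which positivity of the $\bbG_m$-action will play exactly the role of the hypothesis ``$d \ge 1$'' of that lemma.

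\textbf{Structure theory and the inductive step.} First I would record that a positive $\bbG_m$-action forces $\bU$ to be $k$-split and to admit a central series $1 = \bU_0 \triangleleft \cdots \triangleleft \bU_N = \bU$ by $\bbG_m$-stable smooth connected subgroups with each $\bU_i / \bU_{i-1} \cong \bbG_a$, the group $\bbG_m$ acting on each successive quotient through a single positive weight; one gets this by refining the (canonical, hence $\bbG_m$-stable) lower central series and invoking that a contracting $\bbG_m$-action splits a unipotent group over any field. Set $\bZ = \bU_1$, a central $\bbG_a$ of positive weight $n$. Since $H^1(k,\bbG_a) = 0$, the sequence $1 \to \bZ(k) \to \bU(k) \to (\bU/\bZ)(k) \to 1$ is exact, so $\bbF[\bU(k)] \to \bbF[(\bU/\bZ)(k)]$ is a $\bbG_m$-equivariant ring surjection, compatible with augmentations, whose kernel is the ideal generated by the elements $[z] - [e]$ with $z \in \bZ(k)$. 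The image of $I$ is then a $\bbG_m$-stable left ideal not contained in $\ker(\epsilon)$, so by the inductive hypothesis it equals all of $\bbF[(\bU/\bZ)(k)]$; equivalently, $I$ together with the elements $[z]-[e]$ ($z\in\bZ(k)$) generate $\bbF[\bU(k)]$. As $\bbF[\bZ(k)]$ is central, it then suffices to show $[z] - [e] \in I$ for every $z \in \bZ(k)$ --- a statement now involving only the action of $\bbG_m$ on the central $\bZ(k) = k$, where it is multiplication by $t^n$.

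\textbf{The heart: Lemma~\ref{lemma:polylemma} via positivity.} Fix $z \in \bZ(k)$ and choose $x \in I$ with $\epsilon(x) = 1$. Since $I$ is a left ideal one has $[z]\phi_t(x) - \phi_t(x) \in I$ for every $t \in \bbG_m(k) = k^\times$, and because every weight is positive, $\phi_t$ degenerates to the augmentation as $t \to 0$, which forces this family of elements of $I$ to ``limit to $[z]-[e]$''. Making this degeneration algebraic is precisely the content of \S\ref{section:specialideal}: one encodes the relevant dilations by a ring $R$ with finite residue rings at its maximal ideals, turns $\bbF[\bU(k)]$ into an $R$-module, reduces modulo $\fp^r$ to place $[z] - [e]$ in $I + \fp^r\bbF[\bU(k)]$, and then concludes with the Krull intersection theorem as in the proof of Lemma~\ref{lemma:polylemma}. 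When $\Char(k) = 0$ this is essentially the argument of \S\ref{section:specialideal} verbatim, with $R = \bbZ[z_1^{\pm 1},\dots,z_s^{\pm 1}]$ recording a basis of a finitely generated subgroup of $(k,+)$ containing the finitely many field elements in play and with $\bbZ \setminus \{0\} \subset k^\times$ furnishing the dilations; positivity of $n$ is exactly what keeps us among positive powers $z_l^d$, so the (cofinality-under-divisibility strengthening of the) hypothesis of Lemma~\ref{lemma:polylemma} is satisfied.

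\textbf{The main obstacle: $\Char(k) = p > 0$.} This is where the real work sits. Now $\bbZ$ collapses inside $k$, so there is no ring of integer dilations to use; instead one must exploit directly that $k$, hence $k^\times$, is infinite, replacing $R$ by a suitable finitely generated subring of $k$ (again with finite residue rings at its maximal ideals) built from the finitely many field elements occurring. The crux is to show that for each such maximal ideal $\fp$ and each $r \ge 1$ there exists $t \in k^\times$ whose weight-$n$ action is trivial modulo $\fp^r$, so that $\phi_t$ again collapses the chosen element of $I$ onto $[z]-[e]$ modulo $\fp^r\bbF[\bU(k)]$; this is the only place where the hypothesis ``$k$ infinite'' is used in an essential way, and it is essentially forced, since $\St(\bG;\bbF)$ is sometimes reducible for finite $k$. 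Two further technical matters need attention: the structure theory above over imperfect $k$, and the tension between $\bbG_m$-stability and finite generation of modules (handled by arguing one maximal ideal of $R$ at a time). Finally, when $\Char(\bbF) = \Char(k) = p$ the statement is nearly immediate for a different and much simpler reason, as explained in the discussion of $\Char(\bbF) = p$ in the introduction.
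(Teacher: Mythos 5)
Your overall frame --- induction on $\dim\bU$, splitting off a central $\bbG_a$ of positive weight, and viewing positivity as allowing $\phi_t$ to algebraically ``degenerate'' to the augmentation so that one can emulate Lemma~\ref{lemma:polylemma} --- has real overlap with the paper: the structure theory you invoke is Propositions~\ref{proposition:centerpositive}--\ref{proposition:extendpositive}, the equal-characteristic shortcut is Proposition~\ref{prop:unipotentgroupringeasy}, and the char-$p$ argument (Proposition~\ref{prop:positiveidealcorep}) does internally run an induction on dimension through a central $\bbG_a$. Your reduction ``it suffices to show $[z]-[e]\in I$ for all $z\in\bZ(k)$'' is also formally correct: this says $\bZ(k)$ acts trivially on $M=\bbF[\bU(k)]/I$, so $M$ is a module over $\bbF[(\bU/\bZ)(k)]$ killed by the image of $I$, and the inductive hypothesis finishes.

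The gap is in how you discharge that reduced statement. You assert that for $\Char(k)=0$ it is ``essentially the argument of \S\ref{section:specialideal} verbatim,'' with some $R=\bbZ[z_1^{\pm1},\dots,z_s^{\pm1}]$ turning $\bbF[\bU(k)]$ into an $R$-module. But the $\Psi$ of \eqref{eqn:psidefinition} is a \emph{ring} homomorphism out of a commutative ring, which only gives a module structure because the relevant group elements commute; there $\bU(k)\cong(k,+)$. In your inductive step the element $\phi_t(x)=\sum_i c_i[\presup{t}{g_i}]$ still involves the finitely many $g_i\in\bU(k)$, which in general do not commute (you are \emph{not} only ``acting on the central $\bZ(k)$''; $z$ central means $z$ commutes with each $g_i$, but the $g_i$ need not commute with one another). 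A polynomial ring cannot see a genuinely nilpotent $\Gamma\le\bU(k)$; this is exactly why the paper's characteristic-$0$ proof (Propositions~\ref{prop:constructgammachar0}--\ref{proposition:polylemmachar0}) replaces the Krull-intersection argument with Corollary~\ref{corollary:separatenil}, which rests on Hall's theorem (Theorem~\ref{theorem:hall}) that finitely generated abelian-by-nilpotent groups are residually finite. That is a genuinely new ingredient, not a cosmetic change. Similarly, for $\Char(k)=p$ with $\Char\bbF\neq p$ you correctly identify that one must ``exploit that $k$ is infinite'' to produce suitable $t$, but the mechanism you gesture at (a f.g.\ subring with finite residues) does not work as stated and is not what the paper does: the construction of the needed $t$ is the content of Propositions~\ref{prop:apolynomial}--\ref{prop:groupapolynomial}, using A-polynomials and the Chevalley--Warning theorem, fed into the coinvariants substitute for Hall (Proposition~\ref{prop:positiveidealcorep}, Lemma~\ref{lem:abeliancoinv}). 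So the core analytic step is missing in both unequal-characteristic cases.
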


\noindent
We will also prove that if $\bU$ is as in Proposition~\ref{prop:ideal}, then there is a $1$-parameter subgroup
$\bbG_m$ of $\bT$ whose action is positive, so this includes Proposition~\ref{prop:ideal} as a special case.

\subsubsection{Cases}
Most of this paper will be devoted to Theorem \ref{maintheorem:positiveideal}.  Its proof is quite different
depending on the characteristics of $k$ and $\Field$:
\begin{itemize}
\item[(a)] $\Char(k) = 0$.
\item[(b)] $\Char(k) = p$ is positive and $\Char(\Field) \neq \Char(k)$.
\item[(c)] $\Char(k) = p$ is positive and $\Char(\Field) = \Char(k)$.
\end{itemize}
Case (c) turns out to be quite easy, and does not even require the positive action or for $k$ to be infinite.  
For cases (a) and (b), we need to find appropriate generalizations of Lemma \ref{lemma:polylemma}.  

\subsubsection{Characteristic $0$}
When $\Char(k) = 0$, we use deep work
of Philip Hall on representations of nilpotent groups to give a proof that in some sense is quite similar to
the one we gave for Lemma \ref{lemma:polylemma}, though by necessity the details are more abstract.

\subsubsection{Characteristic $p$}
When $\Char(k) = p$ is positive, new
ideas are needed even for $\bG = \GL_2$ since the matrices $\diag(d,1)$ we used there are not always invertible.
Roughly speaking, we will use 
the positive action to ``compress'' the action of our group onto a small subgroup for which our representation
is understandable.  This subgroup must satisfy a lengthy sequence of hard-to-control polynomial conditions.
Since $k$ is infinite, we will be able to use the Chevalley--Warning theorem to ensure that
no matter what those conditions are, they can always be satisfied: the key point is that, because $k$ is infinite, it contains a copy of $\bbF_p^n$ for all $n$, and $n$ can be chosen large enough to satisfy the
conditions of Chevalley--Warning.

\subsection{Outline}
We prove Proposition \ref{prop:augment} in \S \ref{section:buildingsaugmentation}.  Next, in 
\S \ref{section:uni} we give some background about unipotent groups and prove
that Theorem \ref{maintheorem:positiveideal} implies Proposition~\ref{prop:ideal}. Theorem \ref{maintheorem:positiveideal} is then proved in
\S \ref{section:charpequal}--\S \ref{section:charp}.

\subsection{Conventions}

To avoid cluttering our exposition, unless otherwise specified
all subgroups, morphisms, quotients, etc., we discuss involving an algebraic group $\bG$ defined over a field $k$ are themselves
defined over $k$; for instance, instead of saying that something is a parabolic $k$-subgroup
of $\bG$ we will just say that it is a parabolic subgroup of $\bG$.

\section{Buildings and the augmentation}
\label{section:buildingsaugmentation}

In this section, we prove Proposition~\ref{prop:augment}.  Our proof uses the Borel--Tits
structure theory for connected reductive groups, and all results we quote without
proof or reference can be found in \cite[Chapter V]{BorelBook}.  Let $\bG$ be a connected reductive
group over a field $k$, and let $\bbF$ be another field.  To quickly understand what
we are doing, it might be helpful to assume on a first reading that $\bG = \GL_n$.

\subsection{Borel subgroups, unipotent radicals, and tori}
We start by introducing some key subgroups of $\bG$.  See Example \ref{example:gln} below
for what these are in the special case $\bG = \GL_n$.
\begin{itemize}
\item Let $\bB$ be a minimal parabolic subgroup of $\bG$.  Since $k$ is not
assumed to be algebraically closed, $\bB$ might not be a Borel subgroup, but
in this more general context it serves as a suitable replacement.
\item Let $\bT$ be a maximal split torus contained in $\bB$.
\item Let $Z_{\bG}(\bT)$ be the centralizer of $\bT$.  If $\bG$ were a split group
like $\GL_n$, then $Z_{\bG}(\bT)$ would be $\bT$, but in general it can be larger.
The group $Z_{\bG}(\bT)$ is a Levi factor of $\bB$, and in particular is contained
in $\bB$.
\item Let $N_{\bG}(\bT)$ be the normalizer of $\bT$.
\item Let $W = N_{\bG}(\bT)/Z_{\bG}(\bT)$ be the relative\footnote{The usual (or absolute) Weyl group is what one gets by
working over an algebraic closure $\ok$ and letting $\bT$ be a maximal torus defined over $\ok$.  For
split groups like $\bG = \GL_n$ it is the same as the relative Weyl group.}
Weyl group.  This is a finite
reflection group.
\item Let $\bU$ be the unipotent radical of $\bB$.
\end{itemize}
With this notation, we have $\bB = \bU \rtimes Z_{\bG}(\bT)$.

\begin{remark}
All choices of the pair of subgroups $(\bB,\bT)$ are conjugate in $\bG$.
\end{remark}

\begin{example}
\label{example:gln}
For $\bG = \GL_n$, these subgroups are as follows:
\begin{itemize}
\item The group $\bB$ is the Borel subgroup of upper triangular matrices.
\item The maximal split torus $\bT$ is the group of diagonal matrices.
\item The centralizer $Z_{\bG}(\bT)$ is just $\bT$.
\item The normalizer $N_{\bG}(\bT)$ is the group of monomial matrices, i.e., matrices
with a single nonzero entry in each row and column.
\item The Weyl group $W = N_{\bG}(\bT)/Z_{\bG}(\bT) = N_{\bG}(\bT) / \bT$ is the symmetric group on
$n$ letters, which can be identified with the group of permutation matrices.
\item The unipotent radical $\bU$ is the group of upper triangular matrices with $1$'s on the diagonal.
\end{itemize}
For these, it is clear that $\bB = \bU \rtimes \bT$.
\end{example}

\subsection{Tits building, chambers, and the Steinberg representation}
As described in \cite{TitsBuildings}, the Tits building $\Tits(\bG)$ is the Tits building associated to the
group $\bG(k)$ with the BN-pair $(\bB(k),N_{\bG}(\bT)(k))$.  See \cite[\S 6]{AbramenkoBrownBuildings} for
a textbook reference on the Tits building associated to a BN-pair\footnote{Be warned that the standard
notation in the theory of BN-pairs involves a group $T$, but this is {\em not} $\bT(k)$.  Instead,
it is $Z_{\bG}(\bT)(k)$.  If $k$ is algebraically closed, then $Z_{\bG}(\bT)(k) = \bT(k)$, but in general
it is larger.}.  We will not need to know the complete construction and structure of $\Tits(\bG)$, but
only a few properties of it that we will try to isolate.  

Letting $r$ be the semisimple rank of $\bG$,
the building $\Tits(\bG)$ is an $(r-1)$-dimensional simplicial complex\footnote{If $r=0$, this
means that $\Tits(\bG) = \emptyset$; see Remark \ref{remark:anisotropic} for our conventions
about the empty set.} whose simplices are in bijection
with the proper parabolic subgroups\footnote{For $\bG = \GL_n$, these are the stabilizers
of nontrivial flags $0 \subsetneq V_0 \subsetneq \cdots \subsetneq V_i \subsetneq k^n$.} of $\bG$.  
The conjugation action of $\bG(k)$ on itself permutes these parabolic subgroups, and thus
induces an action of $\bG(k)$ on $\Tits(\bG)$.

Let $\RC_{\bullet}(\Tits(\bG);\bbF)$ be the reduced
simplicial chain complex of $\Tits(\bG)$.  The $(r-1)$-dimensional simplices of $\Tits(\bG)$ are in bijection
with the minimal proper parabolic subgroups, and are called the {\em chambers}.\footnote{For $\bG = \GL_n$, the minimal
proper parabolic subgroups are the stabilizers of maximal flags, or equivalently the conjugates of the Borel subgroup
$\bB$ of upper triangular matrices.}  Let $\minPara$ be
the set of minimal proper parabolic subgroups of $\bG$, so $\RC_{r-1}(\Tits(\bG);\bbF) \cong \bbF[\minPara]$.
Since $\Tits(\bG)$ is an $(r-1)$-dimensional simplicial chain complex, we have $\RC_r(\Tits(\bG);\bbF) = 0$ and
thus
\begin{align*}
\St(\bG;\bbF) &= \RH_{r-1}(\Tits(\bG);\bbF) \\
&= \ker(\RC_{r-1}(\Tits(\bG);\bbF) \stackrel{\partial}{\longrightarrow} \RC_{r-2}(\Tits(\bG);\bbF)) \\
&= \ker(\bbF[\minPara] \stackrel{\partial}{\longrightarrow} \RC_{r-2}(\Tits(\bG);\bbF)).
\end{align*}
In particular, $\St(\bG;\bbF)$ is a subrepresentation of $\bbF[\minPara]$.

\subsection{Apartments}
The homology group $\St(\bG;\bbF)$ is spanned by the {\em apartment classes}.  These are the homology classes
of oriented subcomplexes of $\Tits(\bG)$ that are isomorphic to simplicial triangulations of an $(r-1)$-sphere.  In fact,
these subcomplexes are isomorphic to the Coxeter complex\footnote{For $\bG = \GL_n$, the Weyl group is $S_n$ and the Coxeter
complex is the first barycentric subdivision of an $(n-1)$-simplex, or equivalently the simplicial complex whose $i$-simplices
are $(i+1)$-element subsets of $\{1,\ldots,n\}$.} of the Weyl group $W$.  One example of
an apartment is as follows.  Since $W$ is a finite reflection group, each $w \in W$ has a sign
$(-1)^w$.  Since the group $Z_{\bG}(\bT)$ is contained in $\bB$, for
$w \in W = N_{\bG}(\bT)/Z_{\bG}(\bT)$ the image $w \cdot \bB = w \bB w^{-1}$ makes sense and is an element
of $\minPara$.  We then have an apartment class\footnote{For $\bG = \GL_n$, the chambers $w \cdot \bB$ appearing
in this apartment class are the stabilizers of maximal flags $0 \subsetneq V_0 \subsetneq \cdots \subsetneq V_{n-2} \subsetneq k^n$
such that each $V_i$ is the span of $(i+1)$ standard basis vectors in $k^n$.}
\[\cA_0 = \sum_{w \in W} (-1)^w w \cdot \bB \in \St(\bG;\bbF) \subset \bbF[\minPara].\]
The group $\bG(k)$ acts transitively on the set of apartment classes.

\subsection{Basis for Steinberg}
The apartment classes are not linearly independent.
One version of the Solomon--Tits Theorem (see \cite[Theorem 4.73]{AbramenkoBrownBuildings}) says
that $\St(\bG;\bbF)$ has for a basis the set of apartment classes that ``contain $\bB$'' in the sense
that as an element of $\bbF[\minPara]$ their $\bB$-coefficient is $1$.  Letting $\apart_{\bB}$ be the set
of such apartment classes, we thus have a vector space isomorphism 
$\St(\bG;\bbF) \cong \bbF[\apart_{\bB}]$.  However, since $\bG(k)$ does not preserve
the set $\apart_{\bB}$, it is difficult to understand the $\bG(k)$-action on
$\St(\bG;\bbF)$ using this isomorphism.

\subsection{Group-theoretic interpretation}
Consider the conjugation action of $\bG(k)$ on itself.
A standard property of BN-pairs is that the stabilizer of $\bB(k)$ in $\bG(k)$ under
this conjugation action acts transitively on $\apart_{\bB}$.  Since $\bB$ is a parabolic subgroup,
we have $N_{\bB}(\bG) = \bB$, so the stabilizer of $\bB(k)$ in $\bG(k)$ is $\bB(k)$.  It follows that
$\bB(k)$ acts transitively on $\apart_{\bB}$.  Another standard property of BN-pairs is that the 
stabilizer of the apartment class $\cA_0$ is $Z_{\bG}(\bT)(k)$.  Since $\bB(k)$ is the semi-direct product of 
$\bU(k)$ and $Z_{\bG}(\bT)(k)$, the set map $\alpha\colon \bU(k) \rightarrow \apart_{\bB}$ defined
by $\alpha(g) = g \cdot \cA_0$ is a bijection.

The map $\alpha$ is $\bU(k)$-equivariant with respect to the left action of $\bU(k)$ on itself. 
It is also $Z_{\bG}(\bT)(k)$-equivariant with respect to its conjugation action on $\bU(k)$; indeed, 
for $h \in Z_{\bG}(\bT)(k)$ and $g \in \bU(k)$, we have
\begin{displaymath}
h \cdot \alpha(g)=hg \cdot \cA_0=hgh^{-1} \cdot \cA_0=\alpha(hgh^{-1}),
\end{displaymath}
where in the second step we used that $h$ stabilizes $\cA_0$.  

\subsection{Augmentation}
Let $\iota\colon \St(\bG;\bbF) \rightarrow \bbF[\bU(k)]$ be the composition
\[\St(\bG;\bbF) \stackrel{\cong}{\longrightarrow} \bbF[\apart_{\bB}] \stackrel{\alpha^{-1}}{\longrightarrow} \bbF[\bU(k)],\]
so $\iota$ is a linear isomorphism.  By the above, 
$\iota$ is equivariant for both $\bU(k)$ and $Z_{\bG}(\bT)(k)$; however, in what follows, we will only use the equivariance for $\bU(k)$ and $\bT(k)$.
Let $\epsilon\colon \bbF[\bU(k)] \rightarrow \bbF$ be the augmentation.  The composition $\epsilon \circ \iota\colon \St(\bG;\bbF) \rightarrow \Field$
is $\bB(k)$-invariant but not $\bG(k)$-invariant.  It has the following simple interpretation: 

\begin{lemma}
\label{lemma:describeaug}
Let the notation be as above, and let $x \in \St(\bG;\bbF)$.  Then $\epsilon(\iota(x)) \in \bbF$ is the
$\bB$-coefficient of $x$ considered as an element of $\bbF[\minPara]$.
\end{lemma}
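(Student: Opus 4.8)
The plan is to observe that both sides of the claimed identity are $\bbF$-linear functionals on $\St(\bG;\bbF)$ and to check that they agree on the basis $\apart_{\bB}$ furnished by the Solomon--Tits theorem.

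First I would fix notation: let $\phi\colon \bbF[\minPara] \to \bbF$ be the linear functional picking out the coefficient of the basis vector $\bB \in \minPara$, so that ``the $\bB$-coefficient of $x$'' is literally $\phi(x)$ for $x \in \St(\bG;\bbF) \subset \bbF[\minPara]$. Both $\phi|_{\St(\bG;\bbF)}$ and $\epsilon \circ \iota$ are $\bbF$-linear maps $\St(\bG;\bbF) \to \bbF$, and by the version of the Solomon--Tits theorem quoted above the set $\apart_{\bB}$ of apartment classes with $\bB$-coefficient $1$ is an $\bbF$-basis of $\St(\bG;\bbF)$ (regarded as a subspace of $\bbF[\minPara]$). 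So it suffices to prove $\epsilon(\iota(A)) = \phi(A)$ for every $A \in \apart_{\bB}$.

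Then I would evaluate the two sides on a basis element $A \in \apart_{\bB}$. By the very definition of $\apart_{\bB}$ we have $\phi(A) = 1$. For the other side, recall that $\alpha\colon \bU(k) \to \apart_{\bB}$ is a bijection, so there is a unique $g \in \bU(k)$ with $\alpha(g) = A$; chasing through the definition $\iota = \alpha^{-1}\circ(\text{Solomon--Tits})$, the basis vector $A \in \St(\bG;\bbF)$ is carried to the basis vector $A \in \bbF[\apart_{\bB}]$ and then to the basis vector $g \in \bbF[\bU(k)]$, i.e., $\iota(A) = g = 1\cdot g$. Hence $\epsilon(\iota(A)) = 1 = \phi(A)$, and linearity gives $\epsilon(\iota(x)) = \phi(x)$ for all $x \in \St(\bG;\bbF)$.

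This argument is purely a matter of unwinding the definitions of $\iota$, $\alpha$, $\epsilon$, and $\apart_{\bB}$, so there is no genuine obstacle. The only point that merits a moment's care is the compatibility of the two descriptions of $\St(\bG;\bbF)$ — as $\bbF[\apart_{\bB}]$ via Solomon--Tits and as a subrepresentation of $\bbF[\minPara]$ — but this is exactly the content of the form of Solomon--Tits being invoked, namely that $\apart_{\bB}$ is a basis of the \emph{subspace} $\St(\bG;\bbF) \subset \bbF[\minPara]$, and in particular the inclusion sends an apartment class to itself viewed as a simplicial chain in $\RC_{r-1}(\Tits(\bG);\bbF) = \bbF[\minPara]$.
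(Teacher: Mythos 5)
Your proposal is correct and follows essentially the same route as the paper's proof: both reduce to checking the identity on the basis $\apart_{\bB}$, and then observe that for a basis element the $\bB$-coefficient is $1$ by definition while $\iota$ sends it to a single group element of $\bU(k)$, which the augmentation $\epsilon$ sends to $1$.
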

\begin{proof}
It is enough to check this on the basis $\apart_{\bB}$ for $\St(\bG;\bbF)$, so consider $x \in \apart_{\bB}$.  By
the definition of $\apart_{\bB}$, the $\bB$-coefficient of $x$ is $1$, so we must check that $\epsilon(\iota(x))=1$.  By definition, 
$\iota(x)=\alpha^{-1}(x)$. Since this is an element of $\bU(k)$, its image under $\epsilon$ is $1$, as desired.
\end{proof}

\subsection{Proof of Proposition~\ref{prop:augment}}
We finally turn to the proof of Proposition~\ref{prop:augment}. Recall that this states that for all nonzero $x \in \St(\bG;\bbF)$, 
there exists $g \in \bG(k)$ such that $\epsilon(\iota(gx)) \neq 0$.  

\begin{proof}[Proof of Proposition~\ref{prop:augment}]
Consider a nonzero $x \in \St(\bG;\bbF)$.  Regarding $x$ as an element of $\bbF[\minPara]$, some coefficient must
be nonzero.  Since $\bG(k)$ acts transitively on $\minPara$, there exists $g \in \bG(k)$ such
that $gx$ has a nonzero $\bB$-coefficient.  By Lemma \ref{lemma:describeaug}, we
have $\epsilon(\iota(gx)) \neq 0$.
\end{proof}

\section{Unipotent groups and positive actions}
\label{section:uni}

It remains to prove Theorem \ref{maintheorem:positiveideal} and to show that Theorem \ref{maintheorem:positiveideal}
implies Proposition~\ref{prop:ideal}.  The proof of Theorem \ref{maintheorem:positiveideal} starts in 
\S \ref{section:charpequal}.  This section contains preliminaries about unipotent groups and positive
actions.  The final section (\S \ref{section:unipotentradicalpositive}) proves that Theorem \ref{maintheorem:positiveideal} implies
Proposition~\ref{prop:ideal}.  Throughout this section, we fix a field $k$.  All algebraic groups we discuss are affine algebraic group
schemes over $k$.

\subsection{Generalities about unipotent groups}

Recall \cite[IV.2.2.1]{DemazureGabriel} that a {\em unipotent group} is an algebraic group such that every non-trivial closed subgroup admits a non-trivial homomorphism to the additive group $\bbG_a$; equivalently \cite[IV.2.2.5]{DemazureGabriel}, there exists an embedding into the group of strictly upper triangular matrices in $\GL_n$ for some $n$. If $\ok$ is an algebraic closure
of $k$, then an algebraic group $\bU$ is unipotent if and only if its base change $\bU_{\ok}$ is \cite[IV.2.2.6]{DemazureGabriel}.
A unipotent group $\bU$ is {\em split} if there exists a central series
\[\bU = \bU_1 \rhd \bU_2 \rhd \cdots \rhd \bU_n \rhd \bU_{n+1} = 1,\]
where the $\bU_i$ are closed subgroups such that $\bU_i/\bU_{i+1} \cong \bbG_a$ for $1 \leq i \leq n$.  If $k$ is algebraically
closed, then all smooth connected unipotent groups over $k$ are split \cite[IV.4.3.4, IV.4.3.14]{DemazureGabriel}.  From this, we
deduce the following:

\begin{proposition} 
\label{proposition:unipotentnilpotent}
Let $\bU$ be an $n$-dimensional smooth connected unipotent group over a field $k$.  Then $\bU(k)$ is a nilpotent group.  Moreover, if $\Char(k) = p$ is positive,
then all finitely generated subgroups of $\bU(k)$ are finite $p$-groups of nilpotence class at most $n$ and exponent at most $p^n$.
\end{proposition}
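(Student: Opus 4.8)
The plan is to descend the defining splitting of $\bU$ to $\bU(k)$ after base change to an algebraic closure $\ok$ of $k$. By \cite[IV.2.2.6]{DemazureGabriel} the group $\bU_{\ok}$ is a smooth connected unipotent group over the algebraically closed field $\ok$, so by \cite[IV.4.3.4, IV.4.3.14]{DemazureGabriel} it is split: there is a central series
\[\bU_{\ok} = \bV_1 \rhd \bV_2 \rhd \cdots \rhd \bV_n \rhd \bV_{n+1} = 1\]
by closed subgroups with $\bV_i/\bV_{i+1} \cong \bbG_a$ for $1 \le i \le n$, where the length is exactly $n$ since each successive quotient is one-dimensional and $\dim \bU_{\ok} = \dim \bU = n$. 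As $\bU$ is affine, the map $\bU(k) \to \bU(\ok) = \bU_{\ok}(\ok)$ is injective, and I will identify $\bU(k)$ with its image.

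For the first assertion, I would set $H_i = \bU(k) \cap \bV_i(\ok)$. Since $\bV_i$ is normal in $\bU_{\ok}$ with $[\bU_{\ok}, \bV_i] \subseteq \bV_{i+1}$, passing to $\ok$-points and intersecting with $\bU(k)$ shows that each $H_i$ is normal in $\bU(k)$ and that $[\bU(k), H_i] \subseteq H_{i+1}$, while $H_1 = \bU(k)$ and $H_{n+1} = 1$. Thus $\{H_i\}$ is a central series of length $n$ for $\bU(k)$, so $\bU(k)$ is nilpotent of class at most $n$.

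Now suppose $\Char(k) = p > 0$. Then $\bV_i(\ok)/\bV_{i+1}(\ok) \cong \bbG_a(\ok) = \ok$ has exponent $p$, so iterating $g \mapsto g^p$ down the series $\bU_{\ok}(\ok) = \bV_1(\ok) \rhd \cdots \rhd \bV_{n+1}(\ok) = 1$ shows $g^{p^n} = 1$ for every $g \in \bU_{\ok}(\ok)$; in particular $\bU(k)$ has exponent dividing $p^n$. Given a finitely generated subgroup $G \subseteq \bU(k)$, it is nilpotent of class at most $n$ (being a subgroup of $\bU(k)$), and every element of $G$ has $p$-power order. Since a finitely generated nilpotent torsion group is finite, $G$ is a finite group all of whose elements have $p$-power order, i.e., a finite $p$-group; its nilpotence class is at most $n$ and its exponent divides $p^n$, as claimed.

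The only point requiring care is the first one: the splitting series $\{\bV_i\}$ of $\bU_{\ok}$ need not be defined over $k$, so one cannot directly exhibit a central series of $\bU(k)$ with quotients $\bbG_a(k) = k$. Intersecting the $\ok$-series with $\bU(k)$ repairs this at no cost, since all that is needed about the successive quotients is that they are abelian, and in characteristic $p$ of exponent $p$, which is inherited from $\bbG_a(\ok)$. The remaining ingredients — that finitely generated nilpotent groups are polycyclic and that a finitely generated nilpotent torsion group is finite — are standard facts about abstract groups and are not special to algebraic groups.
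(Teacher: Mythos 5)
Your proposal is correct and follows essentially the same route as the paper: base change to an algebraic closure $\ok$, use the split central series of $\bU_{\ok}$ to get nilpotence of class at most $n$ and (in characteristic $p$) exponent at most $p^n$, then transport these bounds to $\bU(k)$ via the inclusion $\bU(k)\hookrightarrow\bU(\ok)$; the paper simply leaves these details to the reader, while you make them explicit by intersecting the $\ok$-central series with $\bU(k)$ and invoking that a finitely generated nilpotent torsion group is finite.
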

\begin{proof}
Let $\ok$ be an algebraic closure of $k$.  Then $\bU_{\ok}$ is split, so $\bU(\ok)$ has a central series 
\[\bU(\ok) = U_1 \rhd U_2 \rhd \cdots \rhd U_n \rhd U_{n+1} = 1\]
with $U_i/U_{i+1} \cong \ok$ for $1 \leq i \leq n$.  This $n$ is
the same as the dimension of $\bU$.  In particular, $\bU(\ok)$ is a nilpotent group.  Since $\bU(k) \subset \bU(\ok)$, 
it follows that $\bU(k)$ is a nilpotent group.

Assume now that $\Char(k) = p$ is positive, and let $G$ be a finitely generated subgroup of $\bU(k)$.
Regard $G$ as a subgroup of $\bU(\ok)$, and let $G_i = G \cap U_i$.  We thus have a central
series
\[G = G_1 \rhd G_2 \rhd \cdots \rhd G_n \rhd G_{n+1} = 1\]
with $G_i/G_{i+1}$ a subgroup of $\ok$ for $1 \leq i \leq n$.  Subgroups of finitely generated
nilpotent groups are finitely generated, so $G_i$ and hence $G_{i}/G_{i+1}$ is finitely generated.
Finitely generated additive subgroups of $\ok$ are isomorphic to finite products of $\Z/p\Z$.  We conclude
that $G$ is a finite $p$-group of nilpotence class at most $n$ and exponent at most $p^n$.
\end{proof}

\subsection{Splitting off unipotent subgroups}
Unipotent groups $\bU$ can be studied inductively by identifying normal subgroups $\bU'$
and then studying the unipotent groups $\bU'$ and $\bU/\bU'$.  The following is helpful for combining results about $\bU'$ and $\bU/\bU'$
into results about $\bU$:

\begin{proposition}
\label{proposition:unipotentproduct}
Let $\bG$ be a linear algebraic group and let $\bV \lhd \bG$ be a smooth connected split unipotent normal subgroup.
There exists a subvariety $\bX$ of $\bG$ containing the identity such that the map  $\bV \times \bX \rightarrow \bG$ induced
by the product on $\bG$ is an isomorphism of varieties.
\end{proposition}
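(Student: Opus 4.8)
The plan is to obtain $\bX$ as the image of a section of the quotient map $\pi\colon\bG\to\bG/\bU$, exploiting that a split unipotent group is ``special'', so that this torsor is trivial. Since $\bU$ is a smooth closed normal subgroup of the linear algebraic group $\bG$, the quotient $\bG/\bU$ exists as a linear algebraic group (in particular an affine variety), the map $\pi$ is faithfully flat, and it exhibits $\bG$ as a (right) $\bU$-torsor over $\bG/\bU$. The whole proposition reduces to producing a morphism of varieties $s\colon\bG/\bU\to\bG$ with $\pi\circ s=\id$ and $s(\bar e)=e$. Indeed, given such an $s$, it is automatically a closed immersion, being a section of the separated morphism $\pi$, so $\bX:=s(\bG/\bU)$ is a closed subvariety of $\bG$ containing the identity; and the multiplication map $\bU\times\bX\to\bG$, $(u,x)\mapsto ux$, is an isomorphism of varieties, with inverse $g\mapsto\bigl(g\cdot s(\pi(g))^{-1},\,s(\pi(g))\bigr)$. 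The latter is a morphism built from $\pi$, $s$, and the group operations of $\bG$; its image lies in $\bU\times\bX$ because $\pi\bigl(g\cdot s(\pi(g))^{-1}\bigr)=\pi(g)\pi(g)^{-1}=\bar e$; and a direct check shows the two composites with multiplication are the respective identities.

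To produce $s$, I would establish the key input: if $\bU$ is split unipotent over $k$, then every $\bU$-torsor over an affine $k$-scheme is trivial. This goes by dévissage along the central series $\bU=\bU_1\rhd\cdots\rhd\bU_{n+1}=1$. The bottom case is $\bbG_a$: a $\bbG_a$-torsor over $\Spec A$ is classified by $\HH^1_{\mathrm{fppf}}(\Spec A,\bbG_a)=\HH^1(\Spec A,\cO)=0$, since $\cO$ is quasi-coherent on an affine scheme. For the inductive step, let $P$ be a $\bU$-torsor over an affine scheme $S$. The last nontrivial term $\bU_n\cong\bbG_a$ is central in $\bU$, hence normal, and $\bU/\bU_n$ is split unipotent of dimension $n-1$; the quotient $P/\bU_n$ is a $(\bU/\bU_n)$-torsor over $S$, hence trivial by induction, so $P/\bU_n\cong(\bU/\bU_n)\times S$ is affine, and then $P\to P/\bU_n$ is a $\bbG_a$-torsor over an affine base, hence trivial. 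Thus $P$ has an $S$-point and is itself trivial. Applying this with $S=\bG/\bU$ yields a section $s_0$ of $\pi$; replacing $s_0$ by $g\mapsto s_0(\bar e)^{-1}s_0(g)$, which is still a section since left translation by $s_0(\bar e)^{-1}\in\bU$ preserves the fibres of $\pi$, we may assume $s(\bar e)=e$. (This triviality statement is standard: it is the assertion that split unipotent groups are special, combined with the vanishing of $\HH^1$ of the structure sheaf on an affine scheme, and can alternatively be extracted from \cite{DemazureGabriel}.)

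The main obstacle is precisely this torsor-triviality input; the remainder is bookkeeping. Within it, the genuine content is the reduction to the case $\bbG_a$ and the vanishing $\HH^1(\Spec A,\cO)=0$. The points needing a little care are that one works with a \emph{central} series, so that each $\bU_n$ is normal in $\bU$ and $P/\bU_n$ makes sense; that $\bU/\bU_n$ remains split unipotent of strictly smaller dimension, so the induction closes; and that $\bU/\bU_n$ is affine, so that it is a legitimate base for the inductive hypothesis.
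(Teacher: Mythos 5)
Your proof follows exactly the same route as the paper: view $\pi\colon\bG\to\bG/\bU$ as a $\bU$-torsor, observe that $\bG/\bU$ is affine, and invoke triviality of split-unipotent torsors over an affine base to get a section. The only difference is that the paper cites \cite[III.3.5.6, IV.4.3.7]{DemazureGabriel} for these two facts, whereas you supply the standard d\'evissage proof of the torsor-triviality statement (reduction along the central series to $\HH^1(\Spec A,\cO)=0$) and spell out the bookkeeping of normalizing the section and inverting the multiplication map.
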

\begin{proof}
Let $\bX = \bG/\bV$ and let $\pi\colon \bG \rightarrow \bX$ be the quotient map.  The map $\pi$ gives $\bG$ the
structure of a $\bV$-torsor over $\bX$, and we can embed $\bX$ into $\bG$ as in the proposition precisely when
that torsor is trivial.  The result thus follows from two facts: the algebraic group $\bX$ is affine (see \cite[III.3.5.6]{DemazureGabriel}),
and all such torsors over affine bases are trivial (see \cite[IV.4.3.7]{DemazureGabriel}).
\end{proof}

\subsection{Weights and positive actions}
\label{section:weightspositive}

Recall (\cite[\S III.8.17]{BorelBook} or \cite[IV.1.1.6]{DemazureGabriel}) that if $\bbG_m = \GL_1$ acts algebraically and linearly on a $k$-vector space $V$, then $V$ decomposes as a direct
sum of weight spaces $V_d$, where $\bbG_m(k) = k^{\times}$ acts on $V_d$ as $t \cdot v = t^d v$ for
$t \in k^{\times}$ and $v \in V_d$.  The integers $d$ with $V_d \neq 0$ are called the {\em weights} of the action.  We say that $\bbG_m$ acts on $V$ with {\em positive weights} if each weight is positive. 

For instance, consider an action of $\bbG_m$ on $\bbG_a$. For $t \in \bbG_m(k)=k^{\times}$ and $x \in \bbG_a(k)=k$, write $\presup{t}{x}$ for the action of $t$ on $x$. The action of $\bbG_m$ on $\bbG_a$ is automatically linear. Indeed, the $k$-algebra automorphisms of $k[x]$ have the form $x \mapsto a+bx$ with $a,b \in k$, and so any automorphism of $\bbG_a$ as a group scheme has the form $x \mapsto bx$. We thus see that the action has a single weight $m \in \Z$, and so $\presup{t}{x} = t^m x$ for all $t$ and $x$ as above.

\begin{remark}
In finite characteristic, there exist non-linear actions of $\bbG_m$ on $\left(\bbG_a\right)^{\times 2}$.
See Remark \ref{remark:nonlinear} below for an example.
\end{remark}

Next, let $\bU$ be a smooth connected unipotent group over $k$ equipped with an action of $\bbG_m$.  For $t \in \bbG_m(k)$ and $g \in \bU(k)$, we will
denote the action of $t$ on $g$ by $\presup{t}{g}$.  As in the introduction, we say that the action of $\bbG_m$ on $\bU$ is a {\em positive action}
if the induced action on the Lie algebra $\Lie(\bU)$ has positive weights.  We then have the following.

\begin{proposition}
\label{proposition:centerpositive}
Let $\bU$ be a smooth connected unipotent group over a field $k$ equipped with a positive $\bbG_m$-action.  Then
there exists a $\bbG_m$-stable central subgroup $\bA \lhd \bU$ with $\bA \cong \bbG_a$ such that $\bbG_m$ acts on $\bA$ with
positive weight.
\end{proposition}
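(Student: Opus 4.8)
The plan is to find $\bA$ inside the last nontrivial term of the lower central series of $\bU$, where centrality and commutativity come for free, and then to locate the copy of $\bbG_a$ by exploiting that a positive $\bbG_m$-action contracts the group onto its identity. We may assume $\bU \neq 1$. Let $\bZ$ be the last nontrivial term $\gamma_c(\bU)$ of the lower central series $\bU = \gamma_1(\bU) \rhd \gamma_2(\bU) \rhd \cdots$. Each $\gamma_i(\bU)$ is generated by the image of an iterated commutator morphism out of a connected variety through the identity, so it is a smooth connected closed subgroup; it is characteristic, hence $\bbG_m$-stable; and $[\bU, \bZ] = \gamma_{c+1}(\bU) = 1$, so $\bZ$ is central in $\bU$ and in particular commutative. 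Any closed subgroup of $\bZ$ is therefore central, and so normal, in $\bU$, so it suffices to produce a $\bbG_m$-stable subgroup $\bA \le \bZ$ with $\bA \cong \bbG_a$. The positivity of the weight is then automatic: $\Lie(\bA) \hookrightarrow \Lie(\bZ) \hookrightarrow \Lie(\bU)$ as $\bbG_m$-modules, and since the action on $\Lie(\bU)$ has only positive weights, the unique weight of $\bbG_m$ on $\Lie(\bbG_a) = \Lie(\bA)$, which is also the weight of the action on $\bA \cong \bbG_a$, is at least $1$.

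Next I would analyze the structure of $\bZ$. Since the weights of $\bbG_m$ on $\Lie(\bZ)$ are positive, the $\bbG_m$-action on the affine group $\bZ$ contracts onto the identity, which is consequently its only fixed point; equivalently, the associated grading on $k[\bZ]$ satisfies $k[\bZ] = \bigoplus_{d \ge 0} k[\bZ]_d$ with $k[\bZ]_0 = k$. (This is a standard property of $\bbG_m$-actions with only positive weights on a unipotent group; alternatively it can be deduced along a composition series of $\bZ$ with one-dimensional smooth connected subquotients, each of which carries a positive weight.) As $\bZ$ is moreover smooth, a Bialynicki-Birula type argument identifies $\bZ$, $\bbG_m$-equivariantly, with its tangent space at the identity: $k[\bZ] \cong k[x_1, \dots, x_N]$ with $\deg x_i = m_i \ge 1$, and I order the coordinates so that $m_1 \le \cdots \le m_N$.

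Finally I would extract the $\bbG_a$. Put $s = \#\{i : m_i = m_N\}$, let $J = (x_i : m_i < m_N) \subset k[\bZ]$, and let $\bA_0 = V(J)$, a coordinate affine subspace. Because comultiplication, counit and antipode are $\bbG_m$-equivariant, for each $i$ the elements $\Delta(x_i) - x_i \otimes 1 - 1 \otimes x_i$ and $S(x_i) + x_i$ are sums of bihomogeneous (resp. homogeneous) terms of total degree $m_i$ whose monomial factors are all nonconstant; when $m_i < m_N$ every such factor has degree $< m_N$ and hence involves only variables of degree $< m_N$, so these corrections lie in $J \otimes k[\bZ] + k[\bZ] \otimes J$ (resp. in $J$). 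It follows that $\bA_0$ is a $\bbG_m$-stable closed subgroup with $k[\bA_0] = k[x_i : m_i = m_N]$. Inside $\bA_0$ every variable has degree exactly $m_N$, so a product of two nonconstant monomials has degree $2 m_N > m_N$ and cannot appear in $\Delta(x_i)$ or $S(x_i)$; hence $\bA_0 \cong \bbG_a^s$ with $\bbG_m$ acting diagonally with weight $m_N$. A coordinate line of this vector group is a $\bbG_m$-stable subgroup $\bA \cong \bbG_a$ of weight $m_N \ge 1$, which is central in $\bU$ by the reduction above, and so is the required subgroup.

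The step I expect to be the main obstacle is the structural analysis of $\bZ$ in positive characteristic over an imperfect field. There $\bZ$ need not be a vector group, and the kernels of $p$-power or Frobenius morphisms need not be smooth, so one cannot simply decompose $\bZ$ itself into weight pieces; the argument must run through the graded coordinate ring, and the only substantive input behind it — that a $\bbG_m$-action with only positive weights on a unipotent group is contracting — is exactly where the positivity hypothesis is used. Everything else is formal.
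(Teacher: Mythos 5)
Your proof is correct and takes a genuinely different route from the paper's. The paper treats the two characteristics separately: in characteristic $0$ it linearizes via the exponential map on $\Lie(\bU)$, and in characteristic $p$ it passes to the cckp-kernel and invokes Tits' structure theorem (Conrad--Gabber--Prasad, Thm.~B.4.3), which is exactly the heavy tool they use to tame the non-linear $\bbG_m$-actions on vector groups that the remark following the proposition illustrates. You instead argue uniformly: reduce to $\bZ = \gamma_c(\bU)$, the last nontrivial term of the lower central series (smooth, connected, central, $\bbG_m$-stable, commutative, all valid over any field), show $k[\bZ]$ is a graded polynomial ring with positive coordinate degrees, and then exhibit a $\bbG_a$ by cutting out the top-degree coordinate subgroup using the graded Hopf structure. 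This replaces Tits' theorem by elementary graded-Hopf-algebra bookkeeping and avoids the exponential map, so no case split is needed. The step you single out as the likely obstacle is not actually a gap: no Bia{\l}ynicki-Birula theory is required, only the standard fact that a graded finitely generated $k$-algebra $A = \bigoplus_{d \ge 0} A_d$ with $A_0 = k$ which is regular at the irrelevant ideal $\fm = \bigoplus_{d>0} A_d$ is a graded polynomial ring (lift a homogeneous basis of $\fm/\fm^2$; the resulting surjection $k[x_1,\dots,x_N] \twoheadrightarrow A$ of domains of the same dimension is an isomorphism). And that the grading on $k[\bZ]$ is one-signed with degree-$0$ part $k$ follows directly and without circularity from smoothness alone, by comparing the $\bbG_m$-grading with the $\fm$-adic filtration: by regularity $\fm^n/\fm^{n+1} \cong \mathrm{Sym}^n(\fm/\fm^2)$ lives in weights of absolute value at least $n$ of the correct sign, and $\bigcap_n \fm^n = 0$ by Krull intersection, so every nonzero homogeneous $f$ has the right sign of weight, with weight $0$ forcing $f \in k$. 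Granting this, your Hopf-algebra computation --- that $J = (x_i : m_i < m_N)$ is a graded Hopf ideal and that $V(J)$ is a vector group of pure weight $m_N$ --- is sound, and so your alternative proof works.
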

\begin{proof}
First suppose that $\Char(k)=0$.  In this case, as discussed in \cite[III.6.3.7]{DemazureGabriel} and \cite[IV.2.4]{DemazureGabriel} we have an isomorphism
$\Lie(\bU) \rightarrow \bU$ given by the exponential map (which can be expressed by a {\em finite} power series since our group is nilpotent).  The
construction of $\Lie(\bU) \rightarrow \bU$ is functorial, and in particular is $\bbG_m$-equivariant.  We can then take
$\bA$ to be the image of a $1$-dimensional  weight space of $\Lie(\bU)$ contained in its center.

Now suppose that $\Char(k)=p$ is positive. Let $\bU_1$ be {\em cckp-kernel} of $\bU$, i.e., the maximal smooth connected $p$-torsion central 
closed subgroup of $\bU$; this exists and is non-trivial \cite[\S B.3]{ConradGabberPrasad}.  The subgroup $\bU_1$ is stable
under automorphisms of $\bU$, and is therefore $\bbG_m$-stable.  Tits \cite[Theorem~B.4.3]{ConradGabberPrasad} proved that
$\bU_1=\bU_2 \times \bU_3$ where $\bU_2$ and $\bU_3$ are closed smooth $\bbG_m$-stable subgroups of $\bU_1$ with the following
properties:
\begin{itemize}
\item The group $\bbG_m$ acts trivially on $\bU_2$.
\item There is vector group $\bV$ defined over $k$ (i.e., a vector space over $k$ regarded as an algebraic
group via its additive structure) and a linear action of $\bbG_m$ on $\bV$ such that
$\bU_3$ is $\bbG_m$-equivariantly isomorphic to $\bV$.
\end{itemize}
Since $\Lie(\bU)$ only has positive weights, we have $\Lie(\bU_2)=0$, so $\bU_2 = 1$.
Thus $\bU_1=\bU_3$ is $\bbG_m$-equivariantly isomorphic to $\bV$.  
We can now take $\bA$ to be a subgroup of $\bU_1$ 
corresponding to a weight space of $\bV$ under this isomorphism.
\end{proof}

\begin{remark}
\label{remark:nonlinear}
If $\Char(k)=p$ then there are non-linear actions of $\bbG_m$ on vector spaces over $k$. For example, let $V=\left(\bbG_a\right)^{\times 2}$, let $\sigma$ be the linear action of $\bbG_m$ on $V$ given by $\sigma(t)(x,y)=(tx,t^2y)$, and let $\tau$ be the automorphism of $V$ given by $\tau(x,y)=(x,y+x^p)$. Then conjugating $\sigma$ by $\tau$ gives a non-linear action of $\bbG_m$ on $V$. This demonstrates one of the difficulties that Tits' theorem must handle.
\end{remark}

\begin{remark}
Applying Proposition \ref{proposition:centerpositive} repeatedly, one can show that a smooth connected unipotent group $\bU$ over a field $k$ equipped with
a positive $\bbG_m$-action must be split.  This implies in particular that as a variety, $\bU$ is isomorphic to an affine space over $k$.
\end{remark}

\subsection{Characteristic 0}
The following proposition will be the key to understanding positive actions in characteristic $0$:

\begin{proposition}
\label{proposition:splitpositivechar0}
Let $\bU$ be an $n$-dimensional smooth connected unipotent group over a field $k$ of characteristic $0$ equipped with a positive $\bbG_m$-action.
Then there exist $\bbG_m$-stable subgroups $\bG_1,\ldots,\bG_n$ of $\bU$ with the following properties:
\begin{itemize}
\item The map $\bG_1 \times \cdots \times \bG_n \rightarrow \bU$ arising from the product on $\bU$ is an isomorphism
of varieties.
\item For $1 \leq i \leq n$, we have $\bG_i \cong \bbG_a$ and $\bbG_m$ acts on $\bG_i$ with positive weight.
\end{itemize}
\end{proposition}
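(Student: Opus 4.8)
The plan is to pass to the Lie algebra of $\bU$ and use the exponential map, which in characteristic~$0$ makes the group completely transparent. Write $\fu = \Lie(\bU)$; this is a nilpotent Lie algebra over $k$, and the $\bbG_m$-action on $\bU$ differentiates to a linear action of $\bbG_m$ on $\fu$ by Lie algebra automorphisms, all of whose weights are positive by hypothesis. Since $\Char(k) = 0$, the exponential $\exp \colon \fu \to \bU$ is a $\bbG_m$-equivariant isomorphism of varieties (\cite[IV.2.4]{DemazureGabriel}); under it, abelian subalgebras of $\fu$ correspond to closed subgroups of $\bU$ and ideals of $\fu$ to normal subgroups of $\bU$. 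So it suffices to produce inside $\fu$ a complete flag of $\bbG_m$-stable ideals together with homogeneous line complements, and to take $\bG_i$ to be the exponential of the $i$-th complement.

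First I would construct, by induction on $n = \dim \fu$, a flag $\fu = \fa_0 \supsetneq \fa_1 \supsetneq \cdots \supsetneq \fa_n = 0$ of $\bbG_m$-stable ideals of $\fu$ with $\dim(\fa_{i-1}/\fa_i) = 1$: the center $\fz(\fu)$ is a nonzero characteristic — hence $\bbG_m$-stable — ideal, so it is a sum of its $\bbG_m$-weight spaces and contains a nonzero weight vector $X_n$; set $\fa_{n-1} = k X_n$ and pull back, along $\fu \to \fu/\fa_{n-1}$, a flag of the $(n-1)$-dimensional quotient $\fu/\fa_{n-1}$, whose induced $\bbG_m$-action is again positive (its weights form a sub-multiset of the weights of $\fu$). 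Since $\bbG_m$ is linearly reductive, for each $i$ I can split $\fa_{i-1} = \fa_i \oplus \fl_i$ with $\fl_i$ a $1$-dimensional $\bbG_m$-submodule; let $X_i$ span $\fl_i$. As $X_i$ is a weight vector of $\fu$, its weight $m_i$ is positive; and since $k X_i$ is a $1$-dimensional (abelian) subalgebra, $\bG_i := \exp(k X_i)$ is a closed subgroup isomorphic to $\bbG_a$ on which $\bbG_m$ acts with weight $m_i > 0$, because $\presup{t}{\exp(s X_i)} = \exp(s\, t^{m_i} X_i)$.

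The substantive point — and the one I expect to take real care — is that the product map $p \colon \bG_1 \times \cdots \times \bG_n \to \bU$ is an isomorphism of varieties; this is ``coordinates of the second kind'' for the flag $(\fa_i)$, and I would prove it by a second induction on $n$ that uses only the flag of ideals, not the torus action. Let $\bU_1 = \exp(\fa_1)$, a closed normal subgroup with $\bU/\bU_1 \cong \bbG_a$, so the composite $\bG_1 \hookrightarrow \bU \to \bU/\bU_1$ is an isomorphism of groups; its inverse is a section $s$ of the quotient map $q \colon \bU \to \bU/\bU_1$ in the category of varieties. This forces the multiplication map $\bG_1 \times \bU_1 \to \bU$ to be an isomorphism of varieties, with inverse $u \mapsto \big(s(q(u)),\, s(q(u))^{-1} u\big)$ — one can instead deduce the triviality of the $\bU_1$-torsor $\bU \to \bU/\bU_1$ from Proposition~\ref{proposition:unipotentproduct}, since $\bU_1$ is split unipotent. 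Because $\fa_1 \supsetneq \fa_2 \supsetneq \cdots \supsetneq \fa_n = 0$ is a flag of ideals of $\fa_1$ with the same homogeneous complements $k X_2, \dots, k X_n$, the inductive hypothesis shows $\bG_2 \times \cdots \times \bG_n \to \bU_1$ is an isomorphism of varieties; composing with $\bG_1 \times \bU_1 \to \bU$ completes the induction (the base case $n \le 1$ being immediate, as a $1$-dimensional smooth connected unipotent group in characteristic~$0$ is $\bbG_a$). The place where care is needed is exactly this last step: keeping straight which maps are isomorphisms of varieties rather than of groups, and correctly invoking the triviality of the relevant $\bbG_a$-torsors; everything else is soft once the exponential map has reduced the problem to the linear algebra of the graded nilpotent Lie algebra $\fu$.
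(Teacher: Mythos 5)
Your proof is correct and takes essentially the same approach as the paper: decompose $\Lie(\bU)$ into one-dimensional $\bbG_m$-weight spaces and take $\bG_i$ to be their images under the exponential map. The paper's proof is a single sentence citing \cite[\S IV.2.4.5]{DemazureGabriel} for this; you have filled in the details it leaves implicit, in particular the explicit flag of $\bbG_m$-stable ideals and the inductive argument that the product map $\bG_1 \times \cdots \times \bG_n \to \bU$ is an isomorphism of varieties.
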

\begin{proof}
Let $\Lie(\bU) = \oplus_{i=1}^n \fu_i$ be a decomposition into $1$-dimensional weight spaces and let $\bG_i$
be the image of $\fu_i$ under the exponential map (see \cite[\S IV.2.4.5]{DemazureGabriel}).
\end{proof}

\subsection{Extending positive actions}
Let $\overline{\bbG}_m = \Spec(k[x])$, which is an algebraic monoid under multiplication; this space is obtained from $\bbG_m$ by simply adding the point $0$.  The following proposition
shows that if a $\bbG_m$-action on a unipotent group is positive, then it can be extended to an action of $\overline{\bbG}_m$.

\begin{proposition}
\label{proposition:extendpositive}
Let $\bU$ be a smooth connected unipotent group over a field $k$ equipped with a positive $\bbG_m$-action.
Then the $\bbG_m$-action can be extended to an action of $\overline{\bbG}_m$ such that $\presup{0}{g} = \id$
for all $g \in \bU(k)$.
\end{proposition}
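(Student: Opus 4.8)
The plan is to recast the statement in terms of the graded Hopf algebra $A=k[\bU]$. Giving a $\bbG_m$-action on $\bU$ by group automorphisms is the same as giving a $\Z$-grading $A=\bigoplus_{n\in\Z}A_n$ for which the Hopf algebra structure maps are homogeneous (with $k$ in degree $0$); here $A_n$ is the subspace on which the coaction $A\to A\otimes k[t^{\pm1}]$ acts by $\alpha\mapsto\alpha\otimes t^n$, equivalently on which each $t_0\in\bbG_m(k)$ acts by $t_0^n$. I claim that the proposition reduces to the following: \emph{if the $\bbG_m$-action is positive, then $A=\bigoplus_{n\ge0}A_n$ and $A_0=k$.}

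Granting this, the rest of the argument is short. Since only non-negative powers of $t$ occur, the coaction factors through $A\otimes k[x]$, where $\overline{\bbG}_m=\Spec k[x]$; the factored map is a coaction of the multiplicative monoid $\overline{\bbG}_m$, it is compatible with the Hopf structure, and its restriction to the open subscheme $\bbG_m$ is the original coaction, so the $\bbG_m$-action extends to $\overline{\bbG}_m$. The $k$-point $0\in\overline{\bbG}_m$ then acts on $A$ by the endomorphism sending $\alpha\in A_n$ to $0^n\alpha$; this is zero on $A_n$ for $n\ge1$ and the identity on $A_0=k$, hence equals the composite $A\xrightarrow{\epsilon}k\to A$ of the counit with the unit, which is the ring homomorphism corresponding to the constant morphism $\bU\to\bU$ at $\id$. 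Therefore $\presup{0}{g}=\id$ for every $g\in\bU(k)$.

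To prove the displayed assertion I would induct on $n=\dim\bU$, the case $n=0$ being trivial. For $n\ge1$, Proposition~\ref{proposition:centerpositive} furnishes a $\bbG_m$-stable central subgroup $\bA\cong\bbG_a$ of $\bU$ on which $\bbG_m$ acts with some positive weight $m\ge1$. The quotient $\bV=\bU/\bA$ is a smooth connected unipotent group whose $\bbG_m$-action is positive, because $\Lie(\bV)$ is a $\bbG_m$-equivariant quotient of $\Lie(\bU)$; by induction $k[\bV]=\bigoplus_{j\ge0}k[\bV]_j$ with $k[\bV]_0=k$. The quotient map $\bU\to\bV$ is an $\bA$-torsor over an affine base, and so admits a section by Proposition~\ref{proposition:unipotentproduct}; the key point is that it admits a $\bbG_m$-\emph{equivariant} section. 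This holds because the obstruction lies in a degree-one $\bbG_m$-equivariant cohomology group of a quasi-coherent sheaf on an affine scheme, which vanishes since $\bbG_m$ is linearly reductive---concretely, one may average an arbitrary section over $\bbG_m$. An equivariant section identifies $\bU$, as a $\bbG_m$-variety, with $\bV\times\bA$, and hence identifies $A$ with the polynomial ring $k[\bV][\xi]$ as graded rings, where $\xi$ (a coordinate on $\bA\cong\bbG_a$) has degree $m\ge1$. Since $k[\bV]$ is non-negatively graded with degree-zero part $k$, so is $A$, completing the induction.

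The step I expect to require the most care is producing the $\bbG_m$-equivariant section: a bare section is immediate from Proposition~\ref{proposition:unipotentproduct}, but arranging compatibility with the torus genuinely uses the linear reductivity of $\bbG_m$. The remaining steps are routine manipulations of the grading on $A$.
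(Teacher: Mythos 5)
Your proof is correct, and it takes a genuinely different route from the paper's. Both proofs induct on $\dim \bU$ and use Proposition~\ref{proposition:centerpositive} to peel off a central $\bbG_m$-stable $\bA \cong \bbG_a$ of positive weight $m$. The paper then works directly with the ring of regular functions: it shows any negative-weight function $f$ on $\bU$ must vanish on $\bA$, hence lie in the ideal $I = \ker(k[\bU] \to k[\bA])$, and by an iterated argument (pulling back generators of the augmentation ideal on $\bU/\bA$ and examining weights of coefficients) that $f \in \bigcap_{n} I^n = 0$ by Krull's intersection theorem. You instead upgrade the non-equivariant section of $\bU \to \bV = \bU/\bA$ from Proposition~\ref{proposition:unipotentproduct} to a $\bbG_m$-\emph{equivariant} one, so that $\bU \cong \bV \times \bA$ as $\bbG_m$-varieties and $k[\bU] \cong k[\bV][\xi]$ as graded rings with $\deg \xi = m \geq 1$; the induction is then immediate. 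Your obstruction-theoretic justification for the equivariant section is sound: rational $\bbG_m$-representations form a semisimple category in every characteristic (since $\bbG_m$ is diagonalizable), so $H^1$ of $\bbG_m$ with coefficients in $\Hom_{\mathrm{var}}(\bV, \bA) = \mathcal O(\bV)$ vanishes. One can even see it bare-handed: a trivialization of the torsor is an element $\xi' \in k[\bU]$ with $\rho_{\bA}(\xi') = \xi' \otimes 1 + 1 \otimes \xi$; the coaction $\rho_{\bA}$ is graded, so the degree-$m$ homogeneous component $\xi'_m$ of $\xi'$ is again a trivialization, now equivariant. Your approach buys a cleaner structural statement ($\bU$ splits equivariantly as a product of weight-graded $\bbG_a$'s) at the cost of invoking linear reductivity; the paper's approach is more elementary, using only Krull's theorem, but is somewhat less transparent about \emph{why} negative weights cannot occur. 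Your treatment of $\presup{0}{g} = \id$ via $A_0 = k$ (so that evaluation at $0$ is unit $\circ$ counit) is also slightly slicker than the paper's additional induction.
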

\begin{proof} 
It is enough to prove this for the base change to an algebraic closure of $k$, so without loss of generality we
can assume that $k$ is algebraically closed.  The proof will be by induction on $\dim(\bU)$.  The base
case is $\dim(\bU) = 1$.  Since $k$ is algebraically closed,
$\bU$ is split, so we must have $\bU \cong \bbG_a$.  The single positive weight $d \geq 1$ of
the action of $\bbG_m$ on $\Lie(\bU) \cong \bbG_a$ satisfies the key identity
\[\presup{t}{x} = t^d x \quad \text{for $t \in \bbG_m(k) = k^{\times}$ and $x \in \bbG_a(k) = k$}.\]
Since $d$ is not negative, this also makes sense for $t=0$, so this extends to $\overline{\bbG}_m$.  Finally,
since $d \neq 0$ this extension satisfies $\presup{0}{x} = 0$.

Assume now that $\dim(\bU) \geq 2$ and that the proposition is true for smaller dimensions.
We will first prove that the $\bbG_m$ action extends to $\overline{\bbG}_m$.
Let $R$ be the ring of regular functions $f\colon \bU(k) \rightarrow k$.  Since
$\bbG_m$ acts algebraically on $\bU$, it also acts algebraically and linearly on
$R$.  Proving that the action of $\bbG_m$ on $\bU$ extends to an action
of $\overline{\bbG}_m$ is equivalent to proving that the action of $\bbG_m$
on $R$ extends to an action of $\overline{\bbG}_m$.

We say that a regular function $f\colon \bU(k) \rightarrow k$ is a {\em weight function} of weight $n$ if 
\begin{equation}
\label{eqn:regularweight}
f(\presup{t}{g}) = t^n f(g) \quad \text{for $t \in \bbG_m(k)$ and $g \in \bU(k)$}.
\end{equation}
Since $\bbG_m$ acts algebraically and linearly on $R$, as a vector space $R$
decomposes into a direct sum of $1$-dimensional subspaces spanned by weight functions (see the beginning\footnote{One might worry that there could
be an issue since $R$ is an infinite-dimensional vector space, but the references given
in \S \ref{section:weightspositive} apply in this level of generality.  In fact, it
follows from algebraicity that $R$ is a union of finite-dimensional subrepresentations.} of \S \ref{section:weightspositive}). 
The $\bbG_m$-action on $R$ extends to $\overline{\bbG}_m$ if and only if there are no nonzero weight functions of negative weight: the point is that just like in the case where $\dim(\bU) = 1$, this implies
that \eqref{eqn:regularweight} also makes sense for $t=0$.

By Proposition \ref{proposition:centerpositive}, there exists a $\bbG_m$-stable central subgroup $\bA \lhd \bU$ with $\bA \cong \bbG_a$
such that $\bbG_m$ acts on $\bA$ with positive weight.  Let $\bU'=\bU/\bA$, so we have an exact sequence
\[1 \longrightarrow \bA \longrightarrow \bU \stackrel{\pi}{\longrightarrow} \bU' \longrightarrow 1.\]
There is a corresponding short exact sequence of Lie algebras, from which it follows that the induced $\bbG_m$-action on $\bU'$ is positive.
By induction on the dimension, the $\bbG_m$-action on $\bU'$ extends to an action of $\overline{\bbG}_m$ such that $\presup{0}{g} = \id$
for all $g \in \bU'(k)$.

Since the identity element of $\bU'(k)$ is fixed by $\bbG_m$, the ideal of regular functions vanishing on it can be generated by 
weight functions $\overline{h}_1, \ldots, \overline{h}_r$, necessarily of nonnegative weights. 
Let $h_i=\pi^{\ast}(\overline{h}_i)$.  The $h_i$ generate the ideal $I \subset R$ of regular functions on $\bU(k)$ vanishing on $\bA(k)$. 

Now, suppose $f$ is a weight function on $\bU(k)$ of negative weight. Then $f|_{\bA(k)}$ is a weight function on $\bA(k)$ of negative weight, 
so $f|_{\bA(k)} = 0$.  It follows that $f \in I$, so we can write $f=\sum_{i=1}^r g_i h_i$ for regular functions $g_i$; in fact, we can take 
the $g_i$ to be weight functions such that $g_i h_i$ has the same weight as $f$. Since $f$ has negative weight and $h_i$ 
has non-negative weight, it follows that $g_i$ must have negative weight. 
Hence, by the same argument, $g_i \in I$, so $f \in I^2$. Continuing in this manner, we find that
\[f \in \bigcap_{n \ge 1} I^n.\]
Since $R$ is a domain, it follows from the Krull intersection theorem that $\bigcap_{n \geq 1} I^n = 0$ (see \cite[Corollary 5.4]{Eisenbud}), so
$f=0$.  We thus find that the $\bbG_m$-action on $\bU$ extends to $\overline{\bbG}_m$. 

It remains to show that for $g \in \bU(k)$, we have $\presup{0}{g} = \id$.  We know this
for $\bU'$ by induction, so
\[\pi(\presup{0}{g})=\presup{0}{\pi(g)} = \id.\]
It follows that $\presup{0}{g} \in \bA(k)$.  Since $\bbG_m$ acts on $\bA$ with positive weight, a final application of our inductive hypothesis
says that $\presup{0}{h}=\id$ for any $h \in \bA(k)$. Thus $\presup{0}{g} = {}^0{(\presup{0}{g})} = \id$,
as desired.
\end{proof}

\begin{remark}
The converse also holds: if $\bU$ is a smooth connected unipotent group over a field $k$ equipped with an
action of $\bbG_m$ that extends to $\overline{\bbG}_m$ such that $\presup{0}{g} = \id$
for all $g \in \bU(k)$, then the $\bbG_m$-action is positive.
\end{remark}

\subsection{Positive actions on unipotent radicals}
\label{section:unipotentradicalpositive}

Our final result in this section shows that Theorem \ref{maintheorem:positiveideal} implies Proposition~\ref{prop:ideal}.

\begin{proposition}
\label{prop:radicalpositive}
Let $\bG$ be a connected reductive group over a field $k$, let $\bB$ be a minimal parabolic subgroup
of $\bG$, let $\bU$ be the unipotent radical of $\bB$, and let $\bT$ be a maximal split torus
of $\bB$.  Then there exists a one-parameter subgroup $\OneParam\colon \bbG_m \rightarrow \bT$ that acts positively on $\bU$.
\end{proposition}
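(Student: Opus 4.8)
The plan is to use the relative root system of $\bG$ with respect to $\bT$. Let $\RelRoot = \RelRoot(\bG,\bT)$ be the set of roots of $\bT$ acting on $\Lie(\bG)$, and let $\RelRootPlus$ be the subset of positive roots determined by the choice of minimal parabolic $\bB$; thus $\Lie(\bU) = \bigoplus_{a \in \RelRootPlus} \fg_a$, where $\fg_a$ is the weight-$a$ eigenspace. A one-parameter subgroup $\OneParam \colon \bbG_m \to \bT$ is the same data as a cocharacter, i.e.\ an element of the cocharacter lattice $X_\ast(\bT)$, and $\OneParam$ acts on $\Lie(\bU)$ with weights $\langle a, \OneParam\rangle$ for $a$ ranging over $\RelRootPlus$ (with multiplicity $\dim \fg_a$). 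So the content of the proposition is exactly that there exists $\OneParam \in X_\ast(\bT)$ with $\langle a, \OneParam\rangle > 0$ for every $a \in \RelRootPlus$.

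First I would recall the standard structure theory: the relative root system $\RelRoot$ is a (possibly non-reduced) root system in the subspace of $X^\ast(\bT)_\R$ spanned by the roots, and $\RelRootPlus$ is the set of positive roots for a choice of simple roots $\Delta = \{a_1, \ldots, a_\ell\}$, so that every element of $\RelRootPlus$ is a nonnegative (in fact positive on its support) integer combination of the $a_i$. Dually, there is a set of fundamental coweights, or more simply one can take $\OneParam = \sum_i m_i \OneParam_i$ where the $\OneParam_i$ are chosen so that $\langle a_j, \OneParam\rangle$ can be made positive; the cleanest choice is to take $\OneParam$ to be (a positive integer multiple of, to land in the lattice) the sum of the fundamental coweights, i.e.\ the coweight $\rho^\vee$ with $\langle a_i, \rho^\vee\rangle = 1$ for all simple $a_i$. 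Then for any $a = \sum_i c_i a_i \in \RelRootPlus$ with all $c_i \geq 0$ and not all zero, we get $\langle a, \OneParam\rangle = \sum_i c_i > 0$, as desired.

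The one subtlety is that $X_\ast(\bT)$ might not contain $\rho^\vee$ itself, only a rational multiple, but since we only need the weights to be positive integers and are free to scale, replacing $\OneParam$ by a suitable positive multiple (clearing denominators) lands us in the cocharacter lattice without affecting positivity. Another mild point: $\bT$ is a maximal \emph{split} torus, and the roots in question are the relative (restricted) roots, which is precisely the setup in which $\RelRoot$ is a genuine root system on $X^\ast(\bT)_\R$ — this is in \cite[Chapter V, \S 21]{BorelBook}, which the paper has already invoked. I would also note $\Lie(\bU)$ has no zero weight for the $\bT$-action since the zero weight space of $\bT$ on $\Lie(\bB)$ is $\Lie(Z_{\bG}(\bT))$, which meets $\Lie(\bU)$ trivially as $\bB = \bU \rtimes Z_{\bG}(\bT)$; hence the weights of $\OneParam$ on $\Lie(\bU)$ are all strictly positive.

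I expect the only real "obstacle" — and it is not much of one — is bookkeeping: making sure that a cocharacter of $\bT$ pairing positively with all \emph{relative} simple roots exists as an honest element of $X_\ast(\bT)$ (not merely $X_\ast(\bT)_\Q$), which is handled by scaling, and that one correctly invokes the fact that the relative roots span a space on which they form a root system so that a dominant regular coweight exists. There is no hard analysis here; the proposition is essentially a translation of "dominant regular coweights exist" into the language of positive actions.
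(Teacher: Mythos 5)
Your proposal is correct and is essentially the paper's own argument: both pick a cocharacter (yours is a scaled sum of fundamental coweights, the paper's is obtained by solving $\langle \chi_i, \alpha\rangle = 1$ for the simple relative roots via nondegeneracy of the pairing on $X(\bT)\otimes\Q$ and $Y(\bT)\otimes\Q$) and then clear denominators to land in the cocharacter lattice. The only difference is cosmetic; if anything your write-up is slightly more careful on the side point that no zero weights occur in $\Lie(\bU)$, and it sidesteps the paper's minor overstatement that the simple relative roots form a basis of all of $X(\bT)\otimes\Q$ (they need only be linearly independent, which is what both arguments actually use).
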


\begin{example}
Suppose $\bG=\GL_n$, the group $\bB$ is the Borel subgroup of upper triangular matrices, $\bT$ is the
torus of diagonal matrices, and $\bU$ is the unipotent subgroup of upper triangular matrices
with $1$'s on the diagonal.  We can then take $\OneParam\colon \bbG_m \rightarrow \bT$
to be the $1$-parameter subgroup $\OneParam(t) = \diag(t^n,t^{n-1},\ldots,t^1)$.
The key property of $\OneParam$ is that for $g \in \bU(k)$ and $t \in \bbG_m(k)$, the matrix
$\presup{t}{g}=\OneParam(t) g \OneParam(t)^{-1}$ is obtained from $g$ by multiplying every
entry above the diagonal by a {\em positive} power of $t$.
\end{example}

\begin{proof}[Proof of Proposition \ref{prop:radicalpositive}]
We start by recalling some basic facts about relative root systems (see \cite[\S 21]{BorelBook}).
Let $X(\bT)$ be the group of characters $\chi\colon \bT \rightarrow \bbG_m$ and let $Y(\bT)$ be the group
of one-parameter subgroups $\gamma\colon \bbG_m \rightarrow \bT$.  For $\chi \in X(\bT)$ and
$\gamma \in Y(\bT)$, the composition $\chi \circ \gamma \colon \bbG_m \rightarrow \bbG_m$ can be written
in the form $\chi \circ \gamma(t) = t^n$ for some $n \in \Z$.  Define $\langle \chi, \gamma \rangle = n$.  This
extends to a nondegenerate pairing between $X(\bT) \otimes \Q$ and $Y(\bT) \otimes \Q$.
Let $\RelRoot \subset X(\bT)$ be the relative root system of $\bT$ in $\bG$.  There is a choice of positive
roots $\RelRootPlus \subset \RelRoot$ such that $\RelRootPlus$ is precisely the set of weights of the action
of $\bT$ on the Lie algebra of $\bU$.  Let $\{\chi_1,\ldots,\chi_n\} \subset \RelRootPlus$ be the simple
positive roots.  These form a basis for $X(\bT) \otimes \Q$, and every element of $\RelRootPlus$
is a nonnegative integer linear combination of the $\chi_i$. 
Using the nondegeneracy of our pairing, we can find some $\alpha \in Y(\bT) \otimes \Q$
such that $\langle \chi_i,\alpha \rangle = 1$ for $1 \leq i \leq n$.  Choosing $d \geq 1$ such
that $d \alpha \in Y(\bT)$, we can then take $\OneParam = d \alpha$.
\end{proof}
 
\section{Ideals and positive actions I: equal characteristic \texorpdfstring{$p$}{p}}
\label{section:charpequal}

We now turn to the proof of Theorem \ref{maintheorem:positiveideal}.  The following result implies this theorem in the special case where $\Char(k) = \Char(\Field) = p$ is positive, but is more
general since it does not require $\bU$ to be equipped with a positive action or for $k$ to be infinite:

\begin{proposition}
\label{prop:unipotentgroupringeasy}
Let $\bU$ be a smooth connected unipotent group over a field $k$ and let $\Field$ be another field.  Assume
that $\Char(k) = \Char(\Field) = p$ is positive.  Let $I$ be a left ideal in $\bbF[\bU(k)]$ that does not lie in the augmentation
ideal.  Then $I = \bbF[\bU(k)]$.
\end{proposition}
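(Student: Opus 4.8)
The plan is to localize the problem to the group ring of a finite $p$-group, where the augmentation ideal is nilpotent, so that any element outside it is invertible.

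First, since $I \not\subset \ker(\epsilon)$, pick $x \in I$ with $\epsilon(x) \neq 0$ and rescale so that $\epsilon(x) = 1$. As an element of $\bbF[\bU(k)]$, the element $x$ is supported on a finite subset of $\bU(k)$, so there is a finitely generated subgroup $H \leq \bU(k)$ with $x \in \bbF[H] \subset \bbF[\bU(k)]$. By Proposition~\ref{proposition:unipotentnilpotent}, the hypothesis $\Char(k) = p$ forces $H$ to be a finite $p$-group.

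Next I would record the standard fact that the augmentation ideal $\fa_H = \ker\!\left(\epsilon|_{\bbF[H]}\right)$ is nilpotent, where $\Char(\bbF) = p$ is used. One way: every nonzero $\bbF[H]$-module contains a nonzero cyclic submodule, which is finite-dimensional over $\bbF$ since $H$ is finite, and a finite $p$-group acting on a nonzero finite-dimensional $\bbF$-vector space in characteristic $p$ has a nonzero fixed vector; hence the trivial module is the unique simple $\bbF[H]$-module, so $\fa_H$ is the Jacobson radical of the Artinian ring $\bbF[H]$ and is therefore nilpotent. Alternatively, induct on $|H|$: if $z \in H$ is a nontrivial central element of order $p$, then $(z-1)^p = z^p - 1 = 0$, and $\bbF[H]/(z-1)\bbF[H] \cong \bbF[H/\langle z \rangle]$ has nilpotent augmentation ideal by induction, so pulling back shows $\fa_H$ is nilpotent.

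Finally, write $x = 1 - a$ with $a = 1 - x$; since $\epsilon(a) = 0$ we have $a \in \fa_H$, so $a^N = 0$ for some $N \geq 1$, and $y = 1 + a + \cdots + a^{N-1} \in \bbF[H] \subset \bbF[\bU(k)]$ satisfies $yx = xy = 1$. As $I$ is a left ideal containing $x$, we conclude $1 = yx \in I$, hence $I = \bbF[\bU(k)]$. There is essentially no serious obstacle: the only ingredient beyond formal manipulation is the nilpotence of $\fa_H$, which is exactly where the hypotheses $\Char(k) = \Char(\bbF) = p$ and Proposition~\ref{proposition:unipotentnilpotent} enter, in accordance with the paper's remark that this case is almost trivial.
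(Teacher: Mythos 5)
Your proof is correct and follows essentially the same route as the paper's: reduce to a finitely generated (hence finite $p$-)subgroup via Proposition~\ref{proposition:unipotentnilpotent}, and exploit the fact that in characteristic $p$ the augmentation ideal of $\bbF[H]$ is the unique maximal left ideal. The only cosmetic difference is that the paper simply cites a reference for this ring-theoretic fact (that the augmentation ideal is the Jacobson radical) and concludes $I\cap\bbF[H]=\bbF[H]$, while you give a self-contained proof of the nilpotence of the augmentation ideal and use it to produce an explicit inverse of $x$ in $\bbF[H]$; both are standard and correct.
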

\begin{proof}
Let $x \in I$ be an element that does does not belong to the augmentation ideal.  Write
$x = \sum_{i=1}^n c_i [g_i]$ with $c_i \in \bbF$ and $g_i \in \bU(k)$.  Let $\Gamma$ be the subgroup
of $\bU(k)$ generated by the $g_i$.  Proposition \ref{proposition:unipotentnilpotent} implies
that $\Gamma$ is a finite $p$-group.  Let $\epsilon_{\Gamma}\colon \bbF[\Gamma] \rightarrow \bbF$ be the augmentation.
Then $\ker(\epsilon_{\Gamma})$ is the Jacobson radical of $\bbF[\Gamma]$, i.e., the intersection of all maximal
left ideals (see \cite[Corollary 8.8]{LamRings}).  Since $\ker(\epsilon_{\Gamma})$ is itself a maximal
left ideal, it follows that it is the unique maximal left ideal.  The element $x \in I \cap \bbF[\Gamma]$
does not lie in $\ker(\epsilon_{\Gamma})$, so $I \cap \bbF[\Gamma] = \bbF[\Gamma]$.  In particular,
$[\id] \in I$, so $I = \bbF[\bU(k)]$.
\end{proof}

\section{Ideals and positive actions II: characteristic \texorpdfstring{$0$}{0}}
\label{section:char0}

In this section, we prove Theorem~\ref{maintheorem:positiveideal} when $\Char(k)=0$.  The proof
is in \S \ref{section:assemblechar0} after three sections of preliminaries.

\subsection{Modules over nilpotent groups}
\label{section:hall}

Recall that a group $G$ is \emph{abelian-by-nilpotent} if there is a normal abelian subgroup $A$ of $G$ such that $G/A$ is nilpotent.  
Also, $G$ is \emph{residually finite} if it injects into its profinite completion, or equivalently if the intersection
of all finite-index normal subgroups of $G$ is trivial.  Hall \cite{HallResiduallyFinite} proved the following.  See
\cite[Theorem 4.3.1]{LennoxRobinson} for a textbook reference.

\begin{theorem}[Hall]
\label{theorem:hall}
All finitely generated abelian-by-nilpotent groups are residually finite.
\end{theorem}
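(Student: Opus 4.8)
The plan is to argue by cases according to whether the element we wish to detect lies in the given abelian normal subgroup. Fix a finitely generated group $G$ with a normal abelian subgroup $A$ such that $Q := G/A$ is nilpotent, and fix $1 \ne g \in G$; we must exhibit a finite-index normal subgroup of $G$ not containing $g$. If $g \notin A$, then $g$ has nontrivial image in $Q$; since a finitely generated nilpotent group is polycyclic and hence residually finite (a classical fact of Hirsch), some finite-index normal subgroup of $Q$ omits that image, and its preimage in $G$ is as required. This case uses only that $Q$ is finitely generated, so from now on assume $1 \ne g \in A$.

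This is the substantial case. First I would note that the conjugation action makes $A$ a left module over the integral group ring $R := \Z[Q]$, and that $A$ is \emph{finitely generated} as an $R$-module because $G$ and $Q$ are finitely generated groups. The crucial input is that finitely generated $R$-modules are residually finite, i.e. that for a finitely generated $R$-module $M$ the intersection of the submodules $M' \le M$ with $M/M'$ finite is $0$; applied to $M = A$ and to the element $g$ this yields an $R$-submodule $B \le A$ with $g \notin B$ and $A/B$ finite, and since $B$ is an $R$-submodule it is $Q$-stable, hence normal in $G$. To prove that residual-finiteness statement one uses Hall's theorem that $R$ is (left and right) Noetherian, so that $M$ is a Noetherian module. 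When $Q$ is abelian, $R$ is a finitely generated commutative $\Z$-algebra, hence — by the Nullstellensatz over $\Z$ — a Jacobson ring all of whose residue fields are finite, and combining this with the Krull intersection theorem one shows that for any $0 \ne m \in M$ there are a maximal ideal $\fa \subset R$ and an $r \ge 1$ with $m \notin \fa^r M$ and $M/\fa^r M$ finite. The general nilpotent (indeed polycyclic) case is reduced to this by working up a polycyclic series of $Q$ and controlling how residual finiteness of modules behaves under the successive skew-Laurent extensions that build $\Z[Q]$ out of $\Z$ — this is the technically demanding part and is really where Hall's argument lives.

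It remains to detect the image $\og$ of $g$ in a finite quotient of $\oG := G/B$. This group has the finite normal subgroup $\oA := A/B$, with $\oG/\oA \cong Q$ and $1 \ne \og \in \oA$. Let $C := Z_{\oG}(\oA)$. Because $\oA$ is finite, $C$ is the kernel of the conjugation homomorphism $\oG \to \Aut(\oA)$ and therefore has finite index in $\oG$; moreover $\oA \le C$ and $\oA$ is central in $C$. Now $C/\oA$, being isomorphic to a subgroup of the nilpotent group $Q$, is nilpotent, and a central extension of a nilpotent group is again nilpotent, so $C$ is nilpotent; it is also finitely generated, having finite index in the finitely generated group $\oG$. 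Hence $C$ is residually finite, so — intersecting finitely many finite-index normal subgroups, one for each nontrivial element of $\oA$ — there is a finite-index normal subgroup $L \lhd C$ with $L \cap \oA = 1$, in particular $\og \notin L$. Finally I would replace $L$ by its normal core $L_0 := \bigcap_{x \in \oG} xLx^{-1}$ in $\oG$, which still has finite index (the normal core of a finite-index subgroup has finite index) and still meets $\oA$ trivially; thus $L_0 \lhd \oG$ has finite index and $\og \notin L_0$, so the preimage of $L_0$ under $G \to \oG$ is a finite-index normal subgroup of $G$ omitting $g$, which finishes the proof.

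I expect the main obstacle to be the module-theoretic residual finiteness statement, together with the Noetherianity of $\Z[Q]$ that underlies it: arranging the induction up the polycyclic series so that the noncommutativity of the group ring is handled and the reduction to the commutative Nullstellensatz and Krull-intersection input genuinely applies. Everything downstream of producing the $Q$-stable finite-index submodule $B$ — centralizers, the central-extension-of-nilpotent argument, normal cores — is soft and should go through routinely.
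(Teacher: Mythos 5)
The paper does not prove this statement: Theorem~\ref{theorem:hall} is quoted as a known result from Hall's 1959 paper \cite{HallResiduallyFinite}, with \cite{LennoxRobinson} given as a textbook reference, so there is no in-paper argument to compare against. Your sketch is a broadly accurate account of how one actually proves it, and the skeleton is sound: the reduction, for $g \notin A$, to residual finiteness of finitely generated nilpotent (hence polycyclic) groups; for $g \in A$, the passage to residual finiteness of finitely generated $\Z[Q]$-modules and the production of a finite-index $Q$-stable submodule $B \le A$ missing $g$; and the endgame in $\overline{G}=G/B$ via the finite-index centralizer $C = Z_{\overline{G}}(\overline{A})$, the fact that a central extension of a nilpotent group is nilpotent, and passage to the normal core. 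That endgame is genuinely needed (a finite-by-(residually finite) extension need not be residually finite in general), and your treatment of it is correct.

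Two caveats. First, the assertion that $A$ is finitely generated as a $\Z[Q]$-module ``because $G$ and $Q$ are finitely generated'' is under-justified: the standard fact requires $Q$ to be finitely \emph{presented}, not merely finitely generated (one then combines a finite presentation of $Q$ with a finite generating set of $G$). This does hold here, since a finitely generated nilpotent group is polycyclic and hence finitely presented, but that is the reason and it should be said. Second, and more substantively, the two ingredients you flag---Noetherianity of $\Z[Q]$ and residual finiteness of its finitely generated modules---\emph{are} Hall's theorem; your description of the induction up a polycyclic series using skew-Laurent extensions, the $\Z$-version of the Nullstellensatz, and the Krull intersection theorem has the right shape, but you correctly acknowledge that this is where essentially all of the work lies and that you have not carried it out. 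As a blind reconstruction of the structure of Hall's proof, the proposal is accurate and well-organized; it is not, as written, a complete proof.
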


We will apply Theorem \ref{theorem:hall} in the form of the following corollary.  If $M$ is a module
over a ring $R$, then an $R$-submodule $M'$ of $M$ is said to be {\em finite-index} if the cardinality
of $M/M'$ is finite.  Just like for groups, we say that $M$ is {\em residually finite} if the intersection
of all finite-index submodules is $0$.

\begin{corollary}
\label{corollary:separatenil}
Let $G$ be a finitely generated nilpotent group and let $M$ be a finitely generated $\Z[G]$-module.  Then $M$ is residually
finite.
\end{corollary}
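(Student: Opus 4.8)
The plan is to reduce to Hall's theorem (Theorem~\ref{theorem:hall}) by realizing $M$ as the abelian part of a semidirect product. Form $\Gamma = M \rtimes G$, with $G$ acting on the (additive) abelian group $M$ through its module structure. First I would check that $\Gamma$ is finitely generated: if $m_1,\ldots,m_a$ generate $M$ over $\Z[G]$ and $g_1,\ldots,g_b$ generate $G$, then $\{m_1,\ldots,m_a,g_1,\ldots,g_b\}$ generates $\Gamma$, since conjugating the $m_i$ by words in the $g_j$ produces $g \cdot m_i$ for every $g \in G$, and these elements $\Z$-span $M$. Moreover $\Gamma$ is abelian-by-nilpotent, as $M \lhd \Gamma$ is abelian and $\Gamma/M \cong G$ is nilpotent. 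Thus Theorem~\ref{theorem:hall} applies and $\Gamma$ is residually finite.

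Next I would transfer residual finiteness from $\Gamma$ back down to $M$. Given a nonzero $m \in M$, view it as an element of $\Gamma$; residual finiteness provides a finite-index normal subgroup $N \lhd \Gamma$ with $m \notin N$. Set $M' = N \cap M$. Then $M'$ is a subgroup of $M$ stable under the conjugation action of $G$ (both $N$ and $M$ are normalized by $G$ inside $\Gamma$), hence a $\Z[G]$-submodule; it has finite index in $M$ because $M/M' = M/(N \cap M) \cong MN/N$ is a subgroup of the finite group $\Gamma/N$; and $m \notin M'$. Since $m$ was an arbitrary nonzero element, the intersection of the finite-index $\Z[G]$-submodules of $M$ is $0$, which is the assertion that $M$ is residually finite.

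I do not anticipate a serious obstacle: the only substantive input is Hall's theorem, and the remainder is the bookkeeping in the previous paragraph, chiefly the verification that a finite-index normal subgroup of $\Gamma$ restricts to a finite-index $G$-stable submodule of $M$. The one place where the hypotheses are genuinely used is in showing $\Gamma$ is finitely generated, which relies on $M$ being finitely generated \emph{as a module} (not merely as an abelian group) together with finite generation of $G$.
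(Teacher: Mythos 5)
Your proposal is correct and follows essentially the same route as the paper: form $\Gamma = M \rtimes G$, observe it is finitely generated and abelian-by-nilpotent, apply Hall's theorem (Theorem~\ref{theorem:hall}), and intersect finite-index normal subgroups of $\Gamma$ with $M$ to produce finite-index $\Z[G]$-submodules separating points. You merely spell out the bookkeeping (finite generation of $\Gamma$, $G$-stability and finite index of $N \cap M$) in more detail than the paper does.
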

\begin{proof}
Set $\Gamma = M \rtimes G$, so $\Gamma$ is an abelian-by-nilpotent group.  Since
$G$ is finitely generated and $M$ is a finitely generated $\Z[G]$-module, the group $\Gamma$ is finitely generated.
Theorem~\ref{theorem:hall} thus implies that $\Gamma$ is residually finite, so the intersection
of all elements of $\cF = \Set{$\Delta$}{$\Delta \lhd \Gamma$ finite index}$ is trivial.
This implies that the intersection of all elements
of $\cF' = \Set{$\Delta \cap M$}{$\Delta \in \cF$}$ is $0$.  Each element of $\cF'$ is a finite-index submodule of $M$, so we
conclude that $M$ is residually finite.
\end{proof} 

\subsection{Finitely generated subgroups of unipotent groups}
\label{section:constructchar0}

For a group $\Gamma$ and $d \geq 1$, let $\Gamma(d)$ be the subgroup of $\Gamma$ generated by all $d^{\text{th}}$ powers. 

\begin{proposition}
\label{prop:constructgammachar0}
Let $\bU$ be a smooth connected unipotent group over a field $k$ of characteristic $0$ equipped with a positive $\bbG_m$-action.
Let $S$ be a finite subset of $\bU(k)$.  There exists a finitely generated subgroup $\Gamma$ of $\bU(k)$ with $S \subset \Gamma$ such that
$\presup{d}{g} \in \Gamma(d)$ for all $g \in \Gamma$ and $d \geq 1$.
\end{proposition}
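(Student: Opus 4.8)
The plan is to reduce the statement to the coordinatewise structure provided by Proposition~\ref{proposition:splitpositivechar0}. First I would apply that proposition to obtain $\bbG_m$-stable subgroups $\bG_1,\dots,\bG_n\cong\bbG_a$ of $\bU$, with $\bbG_m$ acting on $\bG_i$ with weight $m_i\ge 1$, such that the product map $\bG_1\times\cdots\times\bG_n\to\bU$ is an isomorphism of varieties. Taking $k$-points, every $g\in\bU(k)$ can be written as a product $g=g_1\cdots g_n$ with $g_i\in\bG_i(k)$; under an identification $\bG_i(k)=(k,+)$, write $g_i=\gamma_i(a_i)$, so that $\gamma_i(a)^N=\gamma_i(Na)$ for $N\in\Z$ and $\presup{t}{\gamma_i(a)}=\gamma_i(t^{m_i}a)$ for $t\in k^{\times}$.

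I would then simply \emph{build} $\Gamma$ out of the pieces of $S$. Decompose each $s\in S$ as $s=\gamma_1(a_1(s))\cdots\gamma_n(a_n(s))$, let $T$ be the finite set consisting of all of the factors $\gamma_i(a_i(s))$, and put $\Gamma=\langle T\rangle$. Then $\Gamma$ is finitely generated and $S\subset\Gamma$ by construction, so the entire content is to verify that $\presup{d}{g}\in\Gamma(d)$ for all $g\in\Gamma$ and $d\ge1$. The key point is that this condition only needs to be checked on the generating set $T$: for fixed $d$, since the map $g\mapsto\presup{d}{g}$ is a group automorphism of $\bU(k)$ and $\Gamma(d)$ is a subgroup, the set $\{g\in\bU(k):\presup{d}{g}\in\Gamma(d)\}$ is a subgroup of $\bU(k)$, hence contains $\langle T\rangle=\Gamma$ as soon as it contains $T$. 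For a generator $t=\gamma_i(a)$, the positivity hypothesis $m_i\ge1$ makes $d^{m_i-1}$ a positive integer, so $t^{d^{m_i-1}}=\gamma_i(d^{m_i-1}a)\in\Gamma$, and therefore
\[\presup{d}{t}=\gamma_i(d^{m_i}a)=\gamma_i(a)^{d^{m_i}}=\bigl(t^{d^{m_i-1}}\bigr)^{d}\in\Gamma(d).\]

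I do not expect a serious obstacle here: the substantive work — producing a decomposition of $\bU$ as a variety into $\bbG_m$-stable one-dimensional pieces with strictly positive weights — is already encapsulated in Proposition~\ref{proposition:splitpositivechar0} (which itself uses the exponential map and the weight-space decomposition of $\Lie(\bU)$ in characteristic $0$). The only things to be careful about are the reduction to generators, which hinges on $g\mapsto\presup{d}{g}$ being an automorphism and $\Gamma(d)$ being a subgroup, and the exact role of positivity: $m_i\ge 1$ is precisely what lets the torus-twisting factor $d^{m_i}$ coming from $\presup{d}{(-)}$ be absorbed into a $d$-th power of an element of $\Gamma$ (weight $0$ or negative weights would break this, since a generator need not itself be a product of $d$-th powers).
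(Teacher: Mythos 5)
Your proof is correct and follows essentially the same route as the paper: both invoke Proposition~\ref{proposition:splitpositivechar0} to write $\bU$ as a product of $\bbG_m$-stable one-parameter subgroups of positive weight, generate $\Gamma$ by the coordinate factors of the elements of $S$, and observe that $\presup{d}{\gamma_i(a)}=\gamma_i(a)^{d^{m_i}}$ is a $d$-th power of an element of $\Gamma$ since $m_i\ge1$. The only cosmetic difference is that you package the final verification as a ``check on generators'' argument via the subgroup $\{g:\presup{d}{g}\in\Gamma(d)\}$, whereas the paper just unwinds a word $g=s_1\cdots s_r$ directly; these are the same computation.
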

\begin{proof}
By Proposition \ref{proposition:splitpositivechar0}, there exist $\bbG_m$-stable subgroups $\bG_1,\ldots,\bG_n$ of $\bU$ 
such that the following hold:
\begin{itemize}
\item The map $\bG_1 \times \cdots \times \bG_n \rightarrow \bU$ arising from the product on $\bU$ is an isomorphism
of varieties.
\item For $1 \leq i \leq n$, we have $\bG_i \cong \bbG_a$ and $\bbG_m$ acts on $\bG_i$ with positive weight $m_i \geq 1$.
\end{itemize}
This second condition implies in particular that $\presup{d}{x} = x^{d^{m_i}}$ for all $x \in \bG_i(k)$ and $d \geq 1$.
For $1 \leq i \leq n$, choose a finite set $S_i \subset \bG_i(k)$ such that every element of $S$ is a product of elements
of the $S_i$.  Let $\Gamma$ be the subgroup of $\bU(k)$ generated by $S_1 \cup \cdots \cup S_n$.  We thus have $S \subset \Gamma$. 
For $g \in \Gamma$, we can write
\[g = s_1 \cdots s_r \quad \text{with $s_j \in S_{i_j}$ for $1 \leq j \leq r$},\]
so for $d \geq 1$ we have
\[\presup{d}{g} = \presup{d}{s_1} \cdots \presup{d}{s_r} = (s_1)^{d^{m_{i_1}}} \cdots (s_r)^{d^{m_{i_r}}} \in \Gamma(d).\qedhere\]
\end{proof}

\subsection{Modules over unipotent groups in char 0}

The following is our generalization of Lemma \ref{lemma:polylemma} to the setting of unipotent groups over fields of characteristic $0$.

\begin{proposition} 
\label{proposition:polylemmachar0}
Let $\bU$ be a smooth connected unipotent group over a field $k$ of characteristic~0 equipped with a positive $\bbG_m$-action. Let $M$ be a $\bbZ[\bU(k)]$-module and let $N \subset M$ be a submodule.  For some $m_1,\ldots,m_n \in M$ and $g_1,\ldots,g_n \in \bU(k)$, assume that
\[\text{$\presup{d}{g_1} \cdot m_1 + \cdots + \presup{d}{g_n} \cdot m_n \in N$ for all $d \geq 1$}.\]
Then $m_1+\cdots+m_n \in N$.
\end{proposition}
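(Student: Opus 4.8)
The plan is to run the argument of Lemma~\ref{lemma:polylemma} in the present setting, with Corollary~\ref{corollary:separatenil} (residual finiteness of a finitely generated module over a finitely generated nilpotent group) playing the role of the Krull intersection theorem, and with Proposition~\ref{prop:constructgammachar0} supplying the finitely generated subgroup that makes this applicable. First I would replace $M$ by $M/N$ to reduce to the case $N = 0$, so that the hypothesis becomes $\presup{d}{g_1} \cdot m_1 + \cdots + \presup{d}{g_n} \cdot m_n = 0$ for all $d \geq 1$ and the goal becomes $m_1 + \cdots + m_n = 0$.

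Next I would apply Proposition~\ref{prop:constructgammachar0} with $S = \{g_1,\ldots,g_n\}$ to obtain a finitely generated subgroup $\Gamma \subseteq \bU(k)$ containing all the $g_i$ and satisfying $\presup{d}{g} \in \Gamma(d)$ for every $g \in \Gamma$ and $d \geq 1$. Let $M'$ be the $\bbZ[\Gamma]$-submodule of $M$ generated by $m_1,\ldots,m_n$. Then $\Gamma$ is a finitely generated nilpotent group (nilpotence of $\bU(k)$ coming from Proposition~\ref{proposition:unipotentnilpotent}) and $M'$ is a finitely generated $\bbZ[\Gamma]$-module; moreover, since $\presup{d}{g_i} \in \Gamma$, each $\presup{d}{g_i}\cdot m_i$ lies in $M'$, and the reduced hypothesis gives $\presup{d}{g_1}\cdot m_1 + \cdots + \presup{d}{g_n}\cdot m_n = 0$ in $M'$ for all $d \geq 1$. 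By Corollary~\ref{corollary:separatenil}, $M'$ is residually finite, so it suffices to show that $m_1 + \cdots + m_n$ lies in every finite-index $\bbZ[\Gamma]$-submodule $M'' \subseteq M'$.

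Fix such an $M''$. The action of $\Gamma$ on the finite abelian group $M'/M''$ factors through the finite quotient $\Gamma/\Gamma_0$, where $\Gamma_0$ is the kernel of $\Gamma \to \Aut(M'/M'')$. Taking $d$ to be the (finite) exponent of $\Gamma/\Gamma_0$, every $d$-th power in $\Gamma$ lies in $\Gamma_0$, hence $\Gamma(d) \subseteq \Gamma_0$, and therefore $\presup{d}{g_i} \in \Gamma(d) \subseteq \Gamma_0$ acts trivially on $M'/M''$. Consequently
\[ m_1 + \cdots + m_n \equiv \presup{d}{g_1}\cdot m_1 + \cdots + \presup{d}{g_n}\cdot m_n = 0 \pmod{M''}, \]
so $m_1 + \cdots + m_n \in M''$. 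As $M''$ was arbitrary and $M'$ is residually finite, $m_1 + \cdots + m_n = 0$, which is the assertion after undoing the reduction to $N = 0$.

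The step I expect to be the crux — and the reason for the preparatory work in \S\ref{section:constructchar0} — is the passage to $\Gamma$: a general unipotent group $\bU(k)$ need not be finitely generated, so Hall's theorem (via Corollary~\ref{corollary:separatenil}) cannot be applied to $\bU(k)$ directly, and what is needed is a subgroup that is simultaneously finitely generated, contains the $g_i$, and is closed under the power-compression $\presup{d}{(-)} \in \Gamma(d)$, so that the "kill by the exponent" trick of Lemma~\ref{lemma:polylemma} survives. Once such a $\Gamma$ is in hand, the remainder of the argument is purely formal.
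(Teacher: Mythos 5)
Your proof is correct and follows essentially the same route as the paper: reduce to $N=0$, pass to the finitely generated subgroup $\Gamma$ from Proposition~\ref{prop:constructgammachar0}, form the finitely generated $\bbZ[\Gamma]$-submodule $M'$, and use Corollary~\ref{corollary:separatenil} together with the exponent trick to conclude. The only cosmetic difference is that the paper takes $d$ to be the exponent of $\Aut(M'/M'')$ rather than of $\Gamma/\Gamma_0$, which amounts to the same thing.
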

\begin{proof}
Replacing $M$ by $M/N$, we can assume that $N=0$. Proposition~\ref{prop:constructgammachar0} says there is a finitely generated
subgroup $\Gamma$ of $\bU(k)$ containing $\{g_1, \ldots, g_n\}$ such that $\presup{d}{g} \in \Gamma(d)$ for all $g \in \Gamma$ and $d \geq 1$.
Proposition~\ref{proposition:unipotentnilpotent} implies that $\Gamma$ is a nilpotent group. Let $M'$ be the $\Z[\Gamma]$-submodule of $M$ 
generated by the $m_i$.  Consider a finite-index submodule $M''$ of $M'$.
The group of automorphisms of the finite abelian group underlying $M'/M''$ is finite.  Letting $d$ be its exponent, the group 
$\Gamma(d)$ acts trivially on $M'/M''$.  Thus
\begin{displaymath}
0= \presup{d}{g_1} \cdot m_1 + \cdots + \presup{d}{g_n} \cdot m_n \equiv m_1+\cdots+m_n \pmod{M''},
\end{displaymath}
so $m_1+\cdots+m_n \in M''$.  Since $M''$ was an arbitrary finite-index submodule of $M'$, Corollary~\ref{corollary:separatenil}
implies that $m_1+\cdots+m_n = 0$, as required.
\end{proof}

\subsection{Conclusion}
\label{section:assemblechar0}

The following is Theorem \ref{maintheorem:positiveideal} in the special case where $\Char(k)=0$.

\begin{theorem} 
\label{theorem:positiveidealchar0}
Let $\bU$ be a smooth connected unipotent group over a field $k$ of characteristic~0 equipped with a positive action of $\bbG_m$ and let
$\bbF$ be another field.  Let $I \subset \bbF[\bU(k)]$ be a left ideal that is stable under $\bbG_m$ and not contained in the augmentation
ideal.  Then $I = \bbF[\bU(k)]$.
\end{theorem}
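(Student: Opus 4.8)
The plan is to deduce this from Proposition~\ref{proposition:polylemmachar0}, in exactly the way the $\bG=\GL_2$ case was handled in \S\ref{section:specialideal}. Since $I \not\subset \ker(\epsilon)$, I would first choose $x \in I$ with $\epsilon(x) = 1$ and write $x = \sum_{i=1}^{n} c_i [g_i]$ with $c_1,\ldots,c_n \in \bbF$, with $g_1,\ldots,g_n \in \bU(k)$, and with $\sum_{i=1}^{n} c_i = 1$. Since $I$ is a left ideal, it then suffices to show that $[\id] \in I$.

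Next I would exploit the positive $\bbG_m$-action. Because $\Char(k) = 0$, we have $\Z \setminus \{0\} \subset k^{\times} = \bbG_m(k)$, so for every integer $d \geq 1$ the element $\presup{d}{x} = \sum_{i=1}^{n} c_i [\presup{d}{g_i}]$ again lies in $I$, using that $I$ is $\bbG_m$-stable. View $M = \bbF[\bU(k)]$ as a module over $\bbZ[\bU(k)]$ via left multiplication (restricting scalars along $\bbZ[\bU(k)] \to \bbF[\bU(k)]$); then $N := I$ is a $\bbZ[\bU(k)]$-submodule of $M$. Set $m_i = c_i \cdot [\id] \in M$, so that $\presup{d}{g_i} \cdot m_i = c_i [\presup{d}{g_i}]$ and hence $\sum_{i=1}^{n} \presup{d}{g_i} \cdot m_i = \presup{d}{x} \in N$ for all $d \geq 1$. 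Proposition~\ref{proposition:polylemmachar0} now yields $m_1 + \cdots + m_n \in N$, i.e.\ $\bigl(\sum_{i=1}^{n} c_i\bigr)[\id] = [\id] \in I$, and therefore $I = \bbF[\bU(k)] \cdot [\id] = \bbF[\bU(k)]$.

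Given the preliminaries, this assembly step is essentially bookkeeping; all the substance has already been absorbed into Proposition~\ref{proposition:polylemmachar0} (and, behind it, the construction of the subgroup $\Gamma$ in Proposition~\ref{prop:constructgammachar0} together with Hall's residual finiteness theorem). So within this theorem I do not expect a real obstacle: the only point that genuinely requires $\Char(k) = 0$ is the inclusion $\Z_{\geq 1} \subset \bbG_m(k)$ that lets us feed the integers $d$ into the $\bbG_m$-action, and the only thing to be careful about is matching the $\bbG_m$-stability of $I$ to the hypothesis of Proposition~\ref{proposition:polylemmachar0}. When $\Char(k) = p$ this inclusion breaks down, which is precisely why Cases~(b) and~(c) of Theorem~\ref{maintheorem:positiveideal} are treated separately in \S\ref{section:charpequal} and \S\ref{section:charp}.
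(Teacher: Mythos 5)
Your proposal is correct and is essentially the paper's own assembly step: extract $x \in I$ with $\epsilon(x)=1$, feed $\presup{d}{x} \in I$ for $d \geq 1$ into Proposition~\ref{proposition:polylemmachar0} with $M=\bbF[\bU(k)]$, $N=I$, and $m_i = c_i[\id]$, and conclude $[\id]\in I$. The one small improvement over the paper's write-up is that you explicitly point out that the integers $d\geq 1$ land in $\bbG_m(k)=k^{\times}$ precisely because $\Char(k)=0$, which is the step that silently fails in positive characteristic.
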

\begin{proof}
For $g \in \bU(k)$, write $[g]$ for the associated element of $\bbF[\bU(k)]$.  Let $\epsilon\colon \bbF[\bU(k)] \rightarrow \bbF$ be
the augmentation.  Since $I$ is not contained in the augmentation ideal, there
exists some $x \in I$ with $\epsilon(x) = 1$.  Write this as
\[x = \sum_{i=1}^n c_i [g_i] \in I \quad \text{with $g_1,\ldots,g_n \in \bU(k)$, $c_1,\ldots,c_n \in \bbF$, and $\sum_{i=1}^n c_i = 1$}.\]
Since $I$ is stable under the action of $\bbG_m(k)$, for all $t \in \bbG_m(k)$ we have
$\presup{t}{x} \in I$, so
\[\sum_{i=1}^n c_i [\presup{t}{g_i}] = \sum_{i=1}^n \presup{t}{g_i} \cdot c_i [1] \in I.\]
Applying Proposition \ref{proposition:polylemmachar0} with $M=\bbF[\bU(k)]$ and $N=I$, we deduce that
$[1] = \sum_i c_i [1] \in I$, so $I = \bbF[\bU(k)]$.
\end{proof}

\section{Ideals and positive actions III: unequal characteristic \texorpdfstring{$p$}{p}}
\label{section:charp}

In this section, we prove Theorem~\ref{maintheorem:positiveideal} when $\Char(k)=p$ and $\Char(\bbF)\ne p$. The proof is in \S \ref{section:assemblecharp} after five sections of preliminaries.

\subsection{Finitely generated subgroups}
If $\bU$ is a smooth connected unipotent group over a field $k$ of positive characteristic, then 
Proposition \ref{proposition:unipotentnilpotent} says that all finitely generated subgroups of $\bU(k)$ are finite.
The following lemma says that if one bounds the size of a generating set, then only finitely many
isomorphism classes of finite groups occur:

\begin{proposition}
\label{prop:finitelymanysubgroups}
Let $\bU$ be a smooth connected unipotent group over a field $k$ of positive characteristic and
let $m \geq 1$.  Then there only exist finitely many isomorphism classes of subgroups of $\bU(k)$ that are
generated by $m$ elements.
\end{proposition}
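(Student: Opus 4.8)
The plan is to reduce the assertion to a purely group-theoretic finiteness statement and then quote Proposition~\ref{proposition:unipotentnilpotent}. Set $n=\dim\bU$. By Proposition~\ref{proposition:unipotentnilpotent}, every finitely generated subgroup of $\bU(k)$ is a finite $p$-group of nilpotence class at most $n$ and exponent dividing $p^n$ (the exponent is a power of $p$ that is at most $p^n$, hence divides $p^n$). So it suffices to prove the following: there are only finitely many isomorphism classes of groups that are generated by $m$ elements, have nilpotence class at most $n$, and have every element of order dividing $p^n$.

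To prove this, I would exhibit a single finite group of which all such groups are quotients. Let $F$ be the free group on generators $x_1,\dots,x_m$, let $\gamma_{n+1}(F)$ be the $(n+1)$-st term of its lower central series, and let $K\lhd F$ be the normal subgroup generated by $\gamma_{n+1}(F)$ together with all $p^n$-th powers $w^{p^n}$ with $w\in F$; put $B=F/K$. If $\Gamma$ is generated by $m$ elements $g_1,\dots,g_n$, has class at most $n$, and satisfies $h^{p^n}=1$ for all $h\in\Gamma$, then the homomorphism $F\to\Gamma$ with $x_i\mapsto g_i$ is surjective and kills $K$ (since $\gamma_{n+1}(\Gamma)=1$ and every element of $\Gamma$ has order dividing $p^n$), hence factors as a surjection $B\twoheadrightarrow\Gamma$. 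Thus each such $\Gamma$ is isomorphic to $B/L$ for some normal subgroup $L\lhd B$.

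It therefore remains only to check that $B$ is finite: a finite group has finitely many subgroups, hence finitely many quotients up to isomorphism, which finishes the argument. By construction $B$ is finitely generated and nilpotent of class at most $n$, and every element of $B$ has order dividing $p^n$, so $B$ is torsion. A finitely generated torsion nilpotent group is finite, by induction on the nilpotence class: the center $Z(B)$ is a finitely generated torsion abelian group, hence finite, and $B/Z(B)$ is finitely generated torsion nilpotent of strictly smaller class, hence finite by induction, so $B$ is finite. (Alternatively, one can see $B$ is finite directly, since it is assembled from the finitely many subquotients of its lower central series, each a finitely generated abelian group of exponent dividing $p^n$ and hence finite.)

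I do not anticipate a real obstacle here. The only substantive input is Proposition~\ref{proposition:unipotentnilpotent}, which supplies the uniform bounds on nilpotence class and exponent; everything else is the standard fact that a finitely generated torsion nilpotent group is finite and the elementary observation that a fixed finite group has only finitely many quotients. The one point to state carefully is why the structure homomorphism $F\to\Gamma$ annihilates $K$, i.e., that the defining relations of $B$ genuinely hold in every $\Gamma$ of the stated type.
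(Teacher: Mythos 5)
Your proof is correct, and it takes a genuinely different route from the paper's. Both arguments begin identically: apply Proposition~\ref{proposition:unipotentnilpotent} to reduce to showing there are finitely many isomorphism classes of $m$-generated groups with nilpotence class at most $n$ and exponent dividing $p^n$. From there the paper proceeds by induction on nilpotence class $d$, arguing that the set $\cC_d$ of isomorphism types is finite because $G/\gamma_d(G)\in\cC_{d-1}$ and $\gamma_d(G)$ is a quotient of $\wedge^d G^{\mathrm{ab}}$, for which there are finitely many possibilities. You instead construct the \emph{universal} such group $B=F_m/K$ (with $K$ generated by $\gamma_{n+1}(F_m)$ and all $p^n$-th powers), observe that every group in the class is a quotient of $B$, and then show $B$ itself is finite using the standard fact that a finitely generated torsion nilpotent group is finite. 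The two arguments lean on essentially the same input (finite generation of lower central series subquotients of a finitely generated nilpotent group); yours packages it as ``exhibit a single finite universal object,'' the paper's as ``count isomorphism classes layer by layer.'' One small presentational remark: in your center-based induction, the claim that $Z(B)$ is finitely generated is most cleanly justified \emph{after} the inductive step shows $B/Z(B)$ is finite (a finite-index subgroup of a finitely generated group is finitely generated); alternatively, the lower-central-series version you give parenthetically avoids this ordering issue entirely and is closer in spirit to the paper's argument.
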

\begin{proof}
Let $n = \dim(\bU)$ and $p = \Char(k)$.  Proposition \ref{proposition:unipotentnilpotent} says that all finitely generated subgroups of $\bU(k)$ are
nilpotent of class at most $n$ and have exponent at most $p^n$.  A cheap way to proceed is to
quote the Restricted Burnside Problem (proved by Zelmanov \cite{Zelmanov}), which says that there are only
finitely many isomorphism classes of finite groups with $m$ generators and exponent at most $p^n$.  An easier
approach\footnote{This is actually the first step in the restricted Burnside problem: much of the hard work
in its proof is bounding the nilpotence class of finite $p$-groups in terms of their exponent and number
of generators.} is as follows.  Let $\cC_d$ be set of isomorphism classes of finite groups of nilpotence
class $d$ that have $m$ generators and exponent at most $p^n$.  We will prove that $|\cC_d|<\infty$ by induction
on $d$.

The base case $d=1$ is trivial, so assume that $d>1$.  Consider $G \in \cC_d$.  Let $\gamma(G)$ be the 
$d^{\text{th}}$ term of the lower central series of $G$, so $G/\gamma(G) \in \cC_{d-1}$.  Since
$|\cC_{d-1}|<\infty$, it is enough to prove that there are finitely many possibilities for $\gamma(G)$.  Taking
$d$-fold iterated commutators, we get a surjective map of abelian groups $\wedge^d G^{\text{ab}} \rightarrow \gamma(G)$.
Since $G$ has $m$ generators and exponent at most $p^n$, there are finitely many possibilities for $G^{\text{ab}}$, and
thus also finitely many possibilities for $\gamma(G)$.
\end{proof}

This has the following corollary.  Let $F_m$ denote the free group on generators $\{x_1,\ldots,x_m\}$.
If $G$ is a group, $g_1,\ldots,g_m \in G$ are elements, and $w \in F_m$, then let
$w(g_1,\ldots,g_m) \in G$ be the image of $w$ under the homomorphism $F_m \rightarrow G$ taking $x_i$
to $g_i$ for $1 \leq i \leq m$.

\begin{corollary}
\label{corollary:words}
Let $\bU$ be a smooth connected unipotent group over a field $k$ of positive characteristic.
For all $m \geq 1$, there exists a finite set $\cW_m$ of elements of $F_m$ such that for all
$s_1,\ldots,s_m \in \bU(k)$, the subgroup of $\bU(k)$ generated by the $s_i$ equals
$\Set{$w(s_1,\ldots,s_m)$}{$w \in \cW_m$}$.
\end{corollary}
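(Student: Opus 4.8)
The plan is to distill from Proposition~\ref{prop:finitelymanysubgroups} a finite list of ``model'' finite groups, each equipped with all of its generating $m$-tuples, and then let $\cW_m$ consist of words that realize every element of every model group relative to every such tuple.

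Concretely, I would first apply Proposition~\ref{prop:finitelymanysubgroups} to fix representatives $G_1,\ldots,G_r$ for the finitely many isomorphism classes of $m$-generated subgroups of $\bU(k)$; by Proposition~\ref{proposition:unipotentnilpotent} each $G_j$ is finite. For every $j$, every tuple $\bar g=(g_1,\ldots,g_m)\in G_j^m$ that generates $G_j$ (finitely many, since $G_j$ is finite), and every $h\in G_j$, choose a word $w\in F_m$ with $w(g_1,\ldots,g_m)=h$. Let $\cW_m\subset F_m$ be the finite set of all words so obtained.

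To check this works, take $s_1,\ldots,s_m\in\bU(k)$ and set $H=\langle s_1,\ldots,s_m\rangle$. Since $H$ is $m$-generated there is an isomorphism $\phi\colon H\to G_j$ for some $j$, and $\bar g:=(\phi(s_1),\ldots,\phi(s_m))$ generates $G_j$. For any $w\in F_m$ we have $\phi(w(s_1,\ldots,s_m))=w(\bar g)$ because $\phi$ is a homomorphism, and by construction every element of $G_j$ equals $w(\bar g)$ for some $w$ in the part of $\cW_m$ attached to $(G_j,\bar g)$; transporting back through the bijection $\phi$ gives $H\subseteq\{w(s_1,\ldots,s_m):w\in\cW_m\}$. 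The opposite inclusion is immediate, since $H$ is a subgroup containing each $s_i$ and hence contains $w(s_1,\ldots,s_m)$ for every $w\in F_m$. This yields the desired equality.

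The only delicate point — the ``main obstacle,'' such as it is — is that one finite word set must serve uniformly for all tuples $(s_1,\ldots,s_m)$ even though the isomorphism $H\cong G_j$ is non-canonical; this is exactly why $\cW_m$ must contain words for every generating tuple of every $G_j$, and why one uses that homomorphisms intertwine the evaluation maps $w\mapsto w(\cdot)$. (One could also bypass marked tuples: Proposition~\ref{prop:finitelymanysubgroups} bounds the order of every $m$-generated subgroup by some $B$, the Cayley graph of such a subgroup on the generators $s_i^{\pm1}$ has diameter less than $B$, so one may instead take $\cW_m$ to be the set of all words in $x_1^{\pm1},\ldots,x_m^{\pm1}$ of length at most $B-1$.)
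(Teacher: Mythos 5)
Your argument is the same as the paper's: apply Proposition~\ref{prop:finitelymanysubgroups} to get finitely many model groups, and for each generating $m$-tuple of each model group include words realizing all of its elements. The parenthetical alternative (bounding word length by the maximal subgroup order) is a nice, slightly more elementary variant, but the main argument matches the paper.
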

\begin{proof}
By Proposition \ref{prop:finitelymanysubgroups}, there are only finitely many possibilities for
the isomorphism class of the subgroup generated by the $s_i$.  For each of these groups and
each choice of $m$-element generating for it, include words in $\cW_m$ to express every element
in terms of those generators.
\end{proof}

\subsection{A-polynomials}

Let $k$ be a field of positive characteristic $p$.  A map $\lambda \colon k \rightarrow \Field_p$ is an {\em additive map} if it
is a homomorphism of additive groups.
If $X$ is a variety over $k$, then an {\em A-polynomial} on $X$ is a function $f \colon X(k) \to \Field_p$ of the form $\lambda \circ \phi$, where $\phi \colon X \to \bA^1$ is a morphism of varieties over $k$ and $\lambda \colon k \to \Field_p$ is an additive map.  Here the ``A'' stands for
``additive''.  We will need to do a sort of algebraic geometry with A-polynomials, and the
key result is as follows:

\begin{proposition}
\label{prop:apolynomial}
Let $k$ be an infinite field of positive characteristic $p$, let
$f_1,\ldots,f_r\colon \bbA^1(k) \rightarrow \Field_p$ be A-polynomials such that
$f_i(0) = 0$ for $1 \leq i \leq r$, and let $\fa$ be an infinite additive subgroup
of $k$.  Then there exists some nonzero $a \in \fa$  such that $f_i(a) = 0$ for all $1 \leq i \leq r$.
\end{proposition}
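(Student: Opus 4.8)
The plan is to restrict all of the $f_i$ to a sufficiently large finite-dimensional $\Field_p$-subspace of $\fa$, on which each $f_i$ becomes a genuine polynomial function over $\Field_p$ of bounded degree, and then invoke the Chevalley--Warning theorem.

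First I would unwind the definition of an A-polynomial: write $f_i = \lambda_i \circ \phi_i$, where $\phi_i \in k[x]$ has degree $d_i$ and $\lambda_i \colon k \to \Field_p$ is additive. Two elementary remarks: an additive map $k \to \Field_p$ is automatically $\Field_p$-linear (it is $\Z$-linear and $\Field_p = \Z/p$), and the hypothesis $f_i(0) = 0$ says precisely that $\lambda_i$ kills the constant term of $\phi_i$. Put $D = d_1 + \cdots + d_r$.

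Since $\fa$ is infinite, choose an $\Field_p$-subspace $V \subseteq \fa$ with basis $v_1, \dots, v_N$ and $N > D$. For $t = (t_1, \dots, t_N) \in \Field_p^N$ set $a(t) = \sum_l t_l v_l \in V$. Expanding $\phi_i(a(t)) = \sum_{j \le d_i} c_{i,j}\,(\sum_l t_l v_l)^j$ and collecting monomials in $t$ exhibits $\phi_i(a(t))$ as a $k$-linear combination of monomials $t^{\mathbf m}$ of total degree $\le d_i$; applying the $\Field_p$-linear map $\lambda_i$ then gives $f_i(a(t)) = P_i(t)$ for a polynomial $P_i \in \Field_p[t_1, \dots, t_N]$ with $\deg P_i \le d_i$ and $P_i(0) = 0$. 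Now Chevalley--Warning applies: $\sum_i \deg P_i \le D < N$, so the set of common zeros of $P_1, \dots, P_r$ in $\Field_p^N$ has cardinality divisible by $p$; it contains $0$, hence also contains some $t^* \ne 0$, and then $a = a(t^*)$ is a nonzero element of $\fa$ with $f_i(a) = 0$ for all $i$.

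The only step requiring an idea is the passage to $V$: one must notice that restricting $\lambda_i \circ \phi_i$ to a finite-dimensional $\Field_p$-subspace turns it into an honest $\Field_p$-polynomial whose degree is controlled by $\deg \phi_i$ \emph{independently of} $\dim V$, which is what permits taking $\dim V$ large enough for Chevalley--Warning. Everything else is bookkeeping, and the hypothesis that $k$ is infinite enters exactly once, to make $V$ as large as needed.
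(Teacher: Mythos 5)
Your proof is correct and is essentially the same as the paper's: both arguments restrict the A-polynomials to an $\Field_p$-subspace of $\fa$ of dimension exceeding $\sum_i \deg\phi_i$, observe that each restricted $f_i$ becomes a genuine $\Field_p$-polynomial of degree at most $\deg\phi_i$ vanishing at the origin, and then invoke Chevalley--Warning.
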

\begin{proof}
Write $f_i = \lambda_i \circ \phi_i$ with $\phi_i \in k[z]$ and $\lambda_i\colon k \rightarrow \Field_p$
an additive map.  Let $d_i = \deg(\phi_i)$, and set $m = 1+ \sum_i d_i$. Regarding
$\fa$ as an infinite-dimensional vector space over $\Field_p$, we can choose $\Field_p$-linearly independent
elements $v_1,\ldots,v_m \in \fa$.  For $1 \leq i \leq r$, define a function
\[h_i\colon \Field_p^m \rightarrow \Field_p, \qquad h_i(x_1,\ldots,x_m) = f_i(x_1 v_1 + \cdots + x_m v_m).\]
We claim that $h_i$ is a polynomial of degree at most $d_i$.  Indeed, $\phi_i$ is a sum of terms of the form
$c z^e$ with $c \in k$ and $e \leq d_i$, so $h_i$ is a sum of terms
of the form $\lambda_i(c (x_1 v_1 + \cdots + x_m v_m)^e)$.  This can be expanded as
\begin{align*}
&\sum_{j_1 + \cdots + j_m = e} \lambda_i\left(\binom{e}{j_1,\ldots,j_m} c x_1^{j_1} v_1^{j_1} \cdots x_m^{j_m} v_m^{j_m}\right) \\
&\quad \quad \quad = \sum_{j_1 + \cdots + j_m = e} x_1^{j_1} \cdots x_m^{j_m} \lambda_i\left(\binom{e}{j_1,\ldots,j_m} cv_1^{j_1} \cdots v_m^{j_m}\right),
\end{align*}
where the $\binom{e}{j_1,\ldots,j_m}$ are multinomial coefficients.   
Here we are using the fact that $\lambda_i\colon k \rightarrow \Field_p$ is an additive map, which implies that it is
$\Field_p$-linear.  Since $f_i(0) = 0$, we also have $h_i(0,\ldots,0) = 0$.  The 
Chevalley--Warning theorem \cite[Corollary I.2.2.1]{SerreArithmetic} thus implies that there exists some
nonzero $(x_1,\ldots,x_m) \in \Field_p^m$ such that $h_i(x_1,\ldots,x_m) = 0$ for all $1 \leq i \leq r$.  The
desired $a \in \fa$ is then $a = x_1 v_1 + \cdots + x_m v_m$.
\end{proof}

\subsection{Subgroups satisfying A-polynomials}
Let $\bU$ be a smooth connected unipotent group over an infinite field $k$ of positive characteristic 
equipped with a positive action of $\bbG_m$.  By Proposition~\ref{proposition:extendpositive}, 
the $\bbG_m$ action on $\bU$ extends to an action of $\overline{\bbG}_m$ satisfying $\presup{0}{g}=\id$ 
for all $g \in \bU(k)$.  For a subset $S$ of $\bU(k)$ and an additive subgroup $\fa$ of $k$, define $\bU(S,\fa)$ 
to be the subgroup of $\bU(k)$ generated by $\Set{$\presup{a}{s}$}{$s \in S$, $a \in \fa$}$. The 
following demonstrates the flexibility of these subgroups:

\begin{proposition}
\label{prop:groupapolynomial}
Let $\bU$ be a smooth connected unipotent group over an infinite field $k$ of positive characteristic $p$
equipped with a positive action of $\bbG_m$.  Let $S$ be a finite subset of $\bU(k)$, let $\fa$ be an infinite additive subgroup of $k$, and let
$f\colon \bU(k) \rightarrow \Field_p$ be an A-polynomial such that $f(\id) = 0$.  
Then there exists an infinite additive subgroup $\fb$ of $\fa$ such that $f$ vanishes on $\bU(S,\fb)$.
\end{proposition}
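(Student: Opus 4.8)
The plan is to set things up so that Proposition~\ref{prop:apolynomial} can be applied, working one generator at a time. First I would record the elements we need to control: by Corollary~\ref{corollary:words}, if $m=|S|$ then there is a \emph{finite} set $\cW_m\subset F_m$ such that, for any choice of $m$ elements of $\bU(k)$, every element of the subgroup they generate is expressible as $w$ evaluated on those elements for some $w\in\cW_m$. The point of invoking Corollary~\ref{corollary:words} is that it makes the generating set for $\bU(S,\fb)$ \emph{uniformly finite}: for any additive subgroup $\fb$, the group $\bU(S,\fb)$ is generated by $\Set{$\presup{b}{s}$}{$s\in S,\ b\in\fb$}$, but in order to force an A-polynomial to vanish on the whole group it is not enough to make it vanish on the generators (A-polynomials are not homomorphisms). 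Instead I would pass to a larger, still finite, collection of "word maps": for each $w\in\cW_{m'}$ (where $m'$ is a bound on the number of generators one needs, coming from $S$ together with a fixed $\Field_p$-basis of some finite-dimensional piece) the function $(a_1,\ldots)\mapsto f\bigl(w(\presup{a_1}{s_1},\ldots)\bigr)$ should be shown to be, after restricting each $a_j$ to an appropriate one-dimensional family, an A-polynomial in the scalars; then vanishing of $f$ on the group reduces to vanishing of finitely many such A-polynomials.

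The mechanism that turns $\presup{a}{s}$ into something polynomial in $a$ is the extension of the $\bbG_m$-action to $\overline{\bbG}_m$ furnished by Proposition~\ref{proposition:extendpositive}: since the action extends across $t=0$, the map $\overline{\bbG}_m\times\bU\to\bU$, $(t,g)\mapsto\presup{t}{g}$, is a morphism of varieties, so for fixed $s$ the assignment $a\mapsto\presup{a}{s}$ is a morphism $\bA^1\to\bU$, and it sends $0$ to $\id$. Composing with a fixed word map $\bU^{r}\to\bU$ (a morphism of varieties) and then with a coordinate function $\bU\to\bA^1$, we get a polynomial; composing with the additive map $\lambda$ from $f=\lambda\circ\phi$ and using $\Field_p$-linearity of $\lambda$ exactly as in the proof of Proposition~\ref{prop:apolynomial}, we get an A-polynomial on the relevant affine space that vanishes at the origin (because at $a=0$ every $\presup{0}{s}=\id$, the word evaluates to $\id$, and $f(\id)=0$). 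So the whole collection of constraints "$f$ vanishes on $\bU(S,\fb)$" is encoded by finitely many A-polynomials $f_1,\ldots,f_r$ on a single affine space $\bA^1$ (one variable at a time) or on $\bA^N$, each vanishing at $0$.

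Then I would induct or iterate: using Proposition~\ref{prop:apolynomial} applied to the infinite additive group $\fa$ and the finitely many A-polynomials $f_1,\ldots,f_r$ (all vanishing at $0$), I obtain a nonzero $a_1\in\fa$ killing all of them; but I need an \emph{infinite} subgroup $\fb$, not a single element, so the cleaner route is: apply Proposition~\ref{prop:apolynomial} once to produce a nonzero $a_1$, then observe that replacing $\fa$ by $\fa_1=\Field_p a_1+\{$something$\}$ is too small, so instead I would build $\fb$ as an increasing union — at stage $j$ I have an infinite $\fa_{j}\subset\fa$ on which the first $j$ of a countable exhausting list of constraints vanish, pick inside it (again by Proposition~\ref{prop:apolynomial}, whose conclusion in fact gives infinitely many choices since one can always restrict to a codimension-one subgroup and repeat) an infinite $\fa_{j+1}$ killing the $(j+1)$st constraint, and take $\fb=\bigcup_j\fa_j$. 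Actually, since by the previous paragraph the \emph{entire} vanishing condition is equivalent to finitely many A-polynomial conditions (the word set $\cW$ is finite!), no diagonal argument is needed: a single application of a mild strengthening of Proposition~\ref{prop:apolynomial} — namely, that one can find an infinite additive \emph{subgroup} $\fb\subset\fa$ on which finitely many A-polynomials vanishing at $0$ all vanish — does it, and that strengthening follows from Proposition~\ref{prop:apolynomial} by the same Chevalley--Warning argument applied not to a single tuple but to the linear subspace cut out, or by iterating Proposition~\ref{prop:apolynomial} inside ever-larger finite-dimensional $\Field_p$-subspaces of $\fa$ and taking a union.

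\textbf{Main obstacle.} The delicate point is the reduction in the second and third paragraphs: verifying that "$f$ vanishes on the group $\bU(S,\fb)$" really is captured by \emph{finitely many} A-polynomial conditions, uniformly in $\fb$. This requires (i) Corollary~\ref{corollary:words} to bound the words needed, (ii) checking that a word map composed with the orbit map $a\mapsto\presup{a}{s}$ and with $f$ is genuinely an A-polynomial in the scalar(s) $a$ — which hinges on the $\overline{\bbG}_m$-extension making the orbit map a morphism (so polynomiality is automatic) and on $\lambda$ being $\Field_p$-linear — and (iii) arranging that all these A-polynomials vanish at the relevant origin, which is where $\presup{0}{s}=\id$ and $f(\id)=0$ are used. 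Once that bookkeeping is in place, producing the infinite $\fb$ is a routine invocation of Proposition~\ref{prop:apolynomial} (possibly iterated). I expect step (ii)/(iii) — making precise the claim that finitely many word maps suffice and that each gives an A-polynomial vanishing at $0$ — to be where the real work lies.
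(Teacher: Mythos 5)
You have the right ingredients and the right reductions: Corollary~\ref{corollary:words} to get a finite word set, Proposition~\ref{proposition:extendpositive} so that $t\mapsto\presup{t}{g}$ is a morphism of varieties (hence word maps composed with orbit maps and with $f$ are A-polynomials), vanishing at the origin because $\presup{0}{g}=\id$ and $f(\id)=0$, and Proposition~\ref{prop:apolynomial} to find the scalar. This is all exactly what the paper does.

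The gap is in your assembly step. You claim ``the entire vanishing condition is equivalent to finitely many A-polynomial conditions (the word set $\cW$ is finite!)'' and on that basis conclude that a single application of a strengthened Proposition~\ref{prop:apolynomial} suffices. That is not right: $\cW_N$ is finite for each fixed $N$, but $N$ is the size of a generating set for $\bU(S,\fb)$, and the natural generating set $\Set{$\presup{a}{s}$}{$s\in S$, $a\in\fb$}$ is infinite when $\fb$ is infinite. There is no single fixed $N$, so ``$f$ vanishes on $\bU(S,\fb)$'' is not captured by a fixed finite list of A-polynomials evaluated on $\fb$. The paper resolves this via precisely the kind of recursive construction you floated and then discarded: it builds a strictly increasing chain $\fc_1\subsetneq\fc_2\subsetneq\cdots$ of \emph{finite} $f$-vanishing additive subgroups of $\fa$, one $\Field_p$-dimension at a time, and sets $\fb=\bigcup_i\fc_i$. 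The finiteness of each $\fc_i$ is what keeps the generating set of $\bU(S,\fc_i+\Field_p t)$ finite, so Corollary~\ref{corollary:words} applies with a finite (stage-dependent) word set $\cW_N$, producing finitely many single-variable A-polynomials $f_w(t)$ to feed into Proposition~\ref{prop:apolynomial}. Your variant of the increasing union, where each $\fa_j$ is already \emph{infinite}, doesn't circumvent this: verifying that $\fa_{j+1}\supset\fa_j$ is $f$-vanishing would again require controlling the infinitely generated group $\bU(S,\fa_{j+1})$, which is the original problem.
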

\begin{proof}
Say that an additive subgroup $\fc$ of $k$ is {\em $f$-vanishing} if $f$ vanishes on $\bU(S,\fc)$.
Below we will construct a strictly increasing chain
$\fc_1 \subsetneq \fc_2 \subsetneq \cdots$
of $f$-vanishing finite additive subgroups of $\fa$.  Having done this,
the union $\fb$ of the $\fc_i$ will be the desired $f$-vanishing infinite additive subgroup of $\fa$.

Start by setting $\fc_1 = 0$, so $f$ vanishes on $\bU(S,\fc_1) = \id$ by assumption.  Assume
now that we have constructed an $f$-vanishing finite additive subgroup $\fc_i$ of $\fa$.
To construct an $f$-vanishing finite additive subgroup $\fc_{i+1}$ of $\fa$ with $\fc_i \subsetneq \fc_{i+1}$,
it is enough to find some nonzero $d \in \fa \setminus \fc_i$ such that $\fc_i + \Field_p d$ is $f$-vanishing.
To simplify our notation, we will let $\fc = \fc_i$.  Letting $\fd$ be an infinite additive subgroup
of $\fa$ with $\fc \cap \fd = 0$, we will find the desired nonzero $d$ in $\fd$.

For $u \in \fc$ and $c \in \bbF_p$ and $g \in S$, define a morphism of varieties
\[\gamma_{u,c,g} \colon \bbA^1 \to \bU, \qquad \gamma_{u,c,g}(t)=\presup{u+ct}{g}.\]
Here have crucially used the fact (Proposition~\ref{proposition:extendpositive}) that the $\bbG_m$ action extends to $\overline{\bbG}_m=\bbA^1$. 
Enumerate the finite set $\Set{$\gamma_{u,c,g}$}{$u \in \fc$, $c \in \Field_p$, $g \in S$}$ as
$\{\gamma_1,\ldots,\gamma_N\}$.  By definition, for $t \in k$ the group $\bU(S, \fc+\bbF_p t)$ is the subgroup
of $\bU(k)$ generated by $\Set{$\gamma_i(t)$}{$1 \leq i \leq N$}$.  Let $\cW_N \subset F_N$ be the
finite set provided by Corollary~\ref{corollary:words}, and for $w \in \cW_N$ define a morphism of varieties 
\[\phi_w \colon \bbA^1 \to \bU, \qquad \phi_w(t)=w(\gamma_1(t), \ldots, \gamma_N(t)).\]
It follows that for $t \in k$ we have 
\begin{equation}
\label{eqn:gamma}
\bU(S,\fc+\bbF_p t) = \Set{$\phi_w(t)$}{$w \in \cW_N$}.
\end{equation}
Since $\gamma_{u,c,g}(0) = \presup{u}{g}$ for all $u \in \fc$ and $c \in \bbF_p$ and $g \in S$, we also
have
\begin{equation}
\label{eqn:vanishinggamma}
\bU(S,\fc) = \Set{$\phi_w(0)$}{$w \in \cW_N$}.
\end{equation}
For $w \in \cW_N$, the function
\[f_w\colon \bbA^1(k) \rightarrow \Field_p, \qquad f_w(t)=f(\phi_w(t))\]
is an A-polynomial.  Since $\fc$ is $f$-vanishing, \eqref{eqn:vanishinggamma} implies that $f_w(0) = 0$.
Proposition~\ref{prop:apolynomial} thus implies that we can find some nonzero $d \in \fd$ such that
$f_w(d) = 0$ for all $w \in \cW_N$.  By \eqref{eqn:gamma}, this implies that $\fc+\Field_p d$ is
$f$-vanishing, as desired.
\end{proof}

\subsection{Replacement for Hall's theorem}

Our proof of Theorem~\ref{maintheorem:positiveideal} in characteristic~0 crucially relied Hall's theorem on abelian-by-nilpotent groups 
via Corollary~\ref{corollary:separatenil}. We now prove a result that will serve as a replacement for this in positive characteristic. 
For a group $G$ and a $\Z[G]$-module $M$, let $M_G$ denote the $G$-coinvariants of $M$, i.e., the largest quotient of $M$ on which 
$G$ acts trivially.  We begin with a lemma.

\begin{lemma}
\label{lem:abeliancoinv}
Let $p$ be a prime, let $\Lambda=\Z[1/p]$, and let $G$ be an abelian group of exponent $p$. Let
$M$ be a $\Lambda[G]$-module and let $m \in M$ be nonzero.  Then there exists a subgroup $H \subset G$ of index
$1$ or $p$ such that the image of $m$ in $M_{H}$ is nonzero.
\end{lemma}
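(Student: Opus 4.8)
The plan is to produce, via Zorn's lemma, a subgroup $K\subseteq G$ maximal with respect to the property that the image of $m$ in $M_K$ is nonzero, and then to show that any such $K$ has index $1$ or $p$; taking $H=K$ then finishes the argument. For a subgroup $H\subseteq G$ write $I_HM=\sum_{h\in H}(h-1)M$, so that $M_H=M/I_HM$ and the image of $m$ in $M_H$ is nonzero exactly when $m\notin I_HM$. The collection of subgroups $K$ with $m\notin I_KM$ is nonempty (it contains the trivial subgroup, since $I_1M=0$ and $m\neq0$) and closed under unions of chains (as $I_{\bigcup_\alpha K_\alpha}M=\bigcup_\alpha I_{K_\alpha}M$), so Zorn's lemma provides a maximal such $K$. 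If $m\notin I_GM$ this forces $K=G$, of index $1$; so the substance is to show that when $m\in I_GM$ the maximal $K$ cannot have $[G:K]\ge p^2$.

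So suppose $[G:K]\ge p^2$. Since $G$ has exponent $p$, the quotient $\bar G=G/K$ is an $\Field_p$-vector space of dimension $\ge2$; pick $\bar g_1,\bar g_2\in\bar G$ linearly independent and set $\bar G_0=\langle\bar g_1,\bar g_2\rangle\cong\Field_p^2$. Because $G$ is abelian, $I_KM$ is a $\Lambda[G]$-submodule, so $\bar M:=M/I_KM$ is a $\Lambda[\bar G]$-module; let $\bar m\in\bar M$ be the image of $m$, which is nonzero by the choice of $K$. Fix a subgroup $L\subseteq\bar G_0$ of order $p$ and a generator $\ell\in L$; its preimage $K'\subseteq G$ strictly contains $K$, so maximality gives $m\in I_{K'}M$. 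Every element of $K'$ has the form $k\tilde\ell^{\,j}$ with $k\in K$ (for a lift $\tilde\ell$ of $\ell$), and $K$ acts trivially on $\bar M$, so the image of $I_{K'}M$ in $\bar M$ lies in $(\ell-1)\bar M$; hence $\bar m\in(\ell-1)\bar M$. This holds for each of the $p+1$ order-$p$ subgroups $L$ of $\bar G_0$.

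It therefore suffices to prove the following statement about $R:=\Lambda[\Field_p^2]$ and apply it with $V=\bar M$ (restricted along $\bar G_0\cong\Field_p^2$) and $v=\bar m$, forcing $\bar m=0$, a contradiction: if $V$ is an $R$-module and $v\in(\ell_L-1)V$ for a generator $\ell_L$ of each of the $p+1$ order-$p$ subgroups $L\subseteq\Field_p^2$, then $v=0$. For this, put $e_L=\tfrac1p\sum_{h\in L}h\in R$ and $e=\tfrac1{p^2}\sum_{g\in\Field_p^2}g\in R$. Since $\ell_L$ fixes $e_L$ we have $(\ell_L-1)e_L=0$, and since $L$ fixes $e$ we have $e_Le=e$, hence $e=e_Le\in Re_L$. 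Because each nonidentity element of $\Field_p^2$ lies in exactly one $L$ while the identity lies in all $p+1$ of them, one computes $\sum_L e_L=1+pe$; combined with $e\in Re_{L_0}$ this shows $1=\sum_L e_L-pe$ lies in the ideal $\sum_L Re_L$. On the other hand $(\ell_L-1)R=(1-e_L)R$: indeed $1-e_L=\tfrac1p\sum_{j=0}^{p-1}(1-\ell_L^{\,j})$ is a multiple of $\ell_L-1$, and conversely $\ell_L-1=(\ell_L-1)(1-e_L)$ using $(\ell_L-1)e_L=0$. Hence $(\ell_L-1)V=(1-e_L)V=\ker(e_L\colon V\to V)$, so $v\in(\ell_L-1)V$ for every $L$ means $e_Lv=0$ for every $L$, and writing $1=\sum_L r_Le_L$ gives $v=\sum_L r_L(e_Lv)=0$.

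I expect the main obstacle to be the ring-theoretic identity $1\in\sum_L Re_L$, i.e.\ recognizing that the $p+1$ averaging idempotents attached to the lines of $\Field_p^2$ generate the unit ideal of $\Z[1/p][\Field_p^2]$; once this is in hand, the passage from the general lemma to this $\Field_p^2$ statement via the maximal subgroup $K$ is routine bookkeeping. (One should also check the edge cases: if $m\notin I_GM$ the Zorn step yields $K=G$ of index $1$, and otherwise it yields $K$ of index exactly $p$, so $H=K$ works in all cases.)
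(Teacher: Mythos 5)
Your proof is correct, but it takes a genuinely different route from the paper. The paper picks a maximal ideal $J$ of $\Lambda[G]$ containing the annihilator of $m$; the composite $G \to (\Lambda[G]/J)^\times$ lands in $\mu_p$, whose kernel is the required $H$, and the nonvanishing of $m$ in $M_H$ follows because $H$-coinvariants is an exact functor on $\Lambda[H]$-modules (group homology of a $p$-group in a $\Z[1/p]$-module vanishes, then a direct limit argument handles infinite $H$). You instead use Zorn's lemma to produce a subgroup $K$ maximal with $m\notin I_K M$, and rule out $[G:K]\ge p^2$ by an explicit idempotent identity in $\Lambda[\Field_p^2]$: the $p+1$ line-averaging idempotents $e_L$ satisfy $\sum_L e_L = 1+pe$ with $e\in Re_{L_0}$, so $1\in\sum_L Re_L$, while $(\ell_L-1)V=\ker(e_L)$, and these two facts together force $\bar m=0$. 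Both arguments ultimately exploit that $p$ is invertible in $\Lambda$, but the paper buries this in the exactness of coinvariants whereas you make it visible in the denominators of $e_L$ and $e$. The paper's argument is shorter and avoids Zorn; yours is self-contained and avoids group homology. Both are fine.
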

\begin{proof}
Let $I \subset \Lambda[G]$ be the annihilator of $m$.  Since $m \neq 0$, this is a proper ideal, so it
is contained in a maximal ideal $J$.  Let $\mu_p$ be the group of $p^{\text{th}}$ roots of unity in the field
$\Lambda[G]/J$.  It is possible for there to be no nontrivial $p^{\text{th}}$ roots of unity in this field, in which case
$\mu_p = 1$.  The map $\Lambda[G] \rightarrow \Lambda[G]/J$ induces a group homomorphism $G \rightarrow \mu_p$; let
$H$ be its kernel, which has index $1$ or $p$ in $G$.  Since $H$ acts trivially on $\Lambda[G]/J \neq 0$ and
the map $\Lambda[G]/I \rightarrow \Lambda[G]/J$ is surjective, we deduce that $(\Lambda[G]/I)_H \neq 0$.

Since $I$ is the annihilator of $m$, there is an injection $\Lambda[G]/I \rightarrow M$ whose image is
the $\Lambda[G]$-span of $m$.  This induces a map $(\Lambda[G]/I)_H \rightarrow M_H$ whose image
is the $\Lambda[G/H]$-span of $m$ in $M_H$.  To prove that the image of $m$ in $M_H$ is nonzero, it
is enough to show that the map $(\Lambda[G]/I)_H \rightarrow M_H$ is injective.

In fact, we claim that taking $H$-coinvariants is an exact functor on the category of $\Lambda[H]$-modules. First suppose $H$ is finite. Then it is well-known that for a
$\Lambda[H]$-module $N$, the group homology $\HH_k(H;N)$ for $k>0$ is annihilated by the order of $H$, which is a power of $p$. Since $N$ is a $\Lambda$-module, multiplication by $p$ is an isomorphism on $N$. It follows that $\HH_k(H;N)=0$ for $k>0$. This implies the claim in this case, as group homology is the derived functor of coinvariants. We now treat the general case. Write $H=\bigcup_{i \in I} H_i$ where the $H_i$ are finite subgroups of $H$. Then $M_H=\varinjlim M_{H_i}$. Since both
direct limits and the formation of $H_i$-coinvariants are exact, the claim follows.
\end{proof}

The following proposition is our substitute for Corollary~\ref{corollary:separatenil} in positive characteristic:

\begin{proposition}
\label{prop:positiveidealcorep}
Let $\bU$ be a smooth connected unipotent group over an infinite field $k$ of positive characteristic $p$
equipped with a positive $\bbG_m$-action.  Set $\Lambda = \Z[1/p]$.  Let $S$ be a finite subset of $\bU(k)$,
let $\fa$ be an infinite additive subgroup of $k$, let $M$ be a $\Lambda[\bU(S,\fa)]$-module, and
let $m \in M$ be nonzero.  Then there exists an infinite additive subgroup $\fc$ of $\fa$ such
that the image of $m$ in $M_{\bU(S,\fc)}$ is nonzero.
\end{proposition}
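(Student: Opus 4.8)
The plan is to prove this by induction on $\dim(\bU)$. The base case $\dim(\bU)=0$ is immediate: then $\bU(k)=1$, so $\bU(S,\fc)=1$ and $M_{\bU(S,\fc)}=M$ for every $\fc$, and we may take $\fc=\fa$. For the inductive step, assume $\dim(\bU)=n\ge 1$. First I would apply Proposition~\ref{proposition:centerpositive} to get a $\bbG_m$-stable central subgroup $\bA\lhd\bU$ with $\bA\cong\bbG_a$ on which $\bbG_m$ acts with positive weight, and Proposition~\ref{proposition:extendpositive} to extend the action to $\overline{\bbG}_m$ (so $\presup{0}{g}=\id$). Since $\bA(k)$ is central in $\bU(k)$, the group $N:=\bU(S,\fa)\cap\bA(k)$ is central in $\bU(S,\fa)$; being a subgroup of $\bA(k)\cong(k,+)$ it is abelian of exponent $p$. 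Applying Lemma~\ref{lem:abeliancoinv} to $N$ and the nonzero $m\in M$ produces a subgroup $H\le N$ with $[N:H]\in\{1,p\}$ such that the image of $m$ in $M_H$ is nonzero.

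Next I would shrink $\fa$ so that $\bU(S,\cdot)$ meets $\bA(k)$ only inside $H$, absorbing the residual $\bbF_p$ from Lemma~\ref{lem:abeliancoinv} into one more A-polynomial. Fix an $\bbF_p$-linear functional $\nu\colon(k,+)\to\Field_p$ whose restriction to $N$ has kernel exactly $H$ (take $\nu=0$ if $[N:H]=1$). Transporting $\nu$ along a fixed isomorphism $\bA\cong\bbG_a$ gives an A-polynomial on $\bA$; extending the underlying morphism $\bA\to\bbA^1$ to a morphism $\bU\to\bbA^1$ (possible since $\bA\hookrightarrow\bU$ is a closed immersion) gives an A-polynomial $f\colon\bU(k)\to\Field_p$ with $f|_{\bA(k)}=\nu$ and $f(\id)=0$. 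By Proposition~\ref{prop:groupapolynomial} there is an infinite additive subgroup $\fb\subseteq\fa$ with $f$ vanishing on $\bU(S,\fb)$; then $\nu=f$ vanishes on $\bU(S,\fb)\cap\bA(k)$, and since $\bU(S,\fb)\cap\bA(k)\subseteq N$ this forces $\bU(S,\fb)\cap\bA(k)\subseteq N\cap\ker(\nu)=H$. Replacing $\fa$ by $\fb$, I may assume $N':=\bU(S,\fa)\cap\bA(k)\subseteq H$, so the image of $m$ in $M_{N'}$ is still nonzero (as $M_{N'}\twoheadrightarrow M_H$).

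Finally I would invoke the inductive hypothesis for $\bU':=\bU/\bA$, with quotient map $\pi\colon\bU\to\bU'$. The induced $\bbG_m$-action on $\bU'$ is again positive (via the short exact sequence of Lie algebras, as in the proof of Proposition~\ref{proposition:extendpositive}), and $\pi$ identifies $\bU(S,\fa)/N'$ with the analogously defined group $\bU'(\pi(S),\fa)$, so $M_{N'}$ is a $\Lambda[\bU'(\pi(S),\fa)]$-module with the image of $m$ nonzero. The inductive hypothesis yields an infinite additive subgroup $\fc\subseteq\fa$ such that the image of $m$ in $(M_{N'})_{\bU'(\pi(S),\fc)}$ is nonzero. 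Setting $N'_\fc:=\bU(S,\fc)\cap\bA(k)\subseteq N'$, which is central in $\bU(S,\fc)$ with quotient $\bU'(\pi(S),\fc)$, computing coinvariants in stages gives a surjection $M_{\bU(S,\fc)}=(M_{N'_\fc})_{\bU'(\pi(S),\fc)}\twoheadrightarrow(M_{N'})_{\bU'(\pi(S),\fc)}$ carrying the class of $m$ to the class of $m$; hence the image of $m$ in $M_{\bU(S,\fc)}$ is nonzero, completing the induction.

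\textbf{Main obstacle.} The delicate point is aligning Lemma~\ref{lem:abeliancoinv} with the staged coinvariants: that lemma pins $m$ down only up to one residual $\bbF_p$ (the index-$p$ case), while the groups $\bU(S,\fc)\cap\bA(k)$ controlling the first stage are not literally subgroups of a fixed abelian group we control in advance. The resolution is to convert "$\bU(S,\cdot)\cap\bA(k)\subseteq H$" into the vanishing of a single A-polynomial and invoke Proposition~\ref{prop:groupapolynomial} — ultimately Chevalley--Warning (Proposition~\ref{prop:apolynomial}) — which is robust enough that this extra constraint can always be met by passing to a smaller infinite additive subgroup of $\fa$.
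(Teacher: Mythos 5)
Your proposal is correct and follows essentially the same route as the paper: induct on $\dim(\bU)$, extract a central $\bA\cong\bbG_a$ with positive $\bbG_m$-weight, apply Lemma~\ref{lem:abeliancoinv} to $\bU(S,\fa)\cap\bA(k)$, convert "$\bU(S,\cdot)\cap\bA(k)\subseteq H$" into the vanishing of an A-polynomial, shrink $\fa$ via Proposition~\ref{prop:groupapolynomial}, and pass to $\bU/\bA$. The only real deviation is in constructing the A-polynomial $f$: the paper builds $f=\lambda\circ\pi$ from the variety-splitting $\bA\times\bX\cong\bU$ of Proposition~\ref{proposition:unipotentproduct}, whereas you simply lift the regular function (the isomorphism $\bA\to\bbA^1$) along the surjection $k[\bU]\twoheadrightarrow k[\bA]$; both are valid and give $f(\id)=0$ and $f|_{\bA(k)}=\nu$. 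You also spell out the two-stage coinvariants argument ($M_{\bU(S,\fc)}=(M_{N'_\fc})_{\bU'(\pi(S),\fc)}\twoheadrightarrow(M_{N'})_{\bU'(\pi(S),\fc)}$) more explicitly than the paper, which leaves it implicit.
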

\begin{proof}
We will prove this by induction on $n = \dim(\bU)$.  The case $n=0$ being trivial, assume that $n>0$ and
that the lemma is true in smaller dimensions.  Proposition \ref{proposition:centerpositive} says 
there exists a $\bbG_m$-stable central subgroup $\bA \lhd \bU$ with $\bA \cong \bbG_a$.  Since $\Char(k) = p$, the
abelian group underlying $k$ has exponent $p$.  It follows that the intersection
\[\bA(k) \cap \bU(S,\fa) \subset \bA(k) = k\]
is an abelian group of exponent $p$.  By Lemma~\ref{lem:abeliancoinv}, there is a subgroup
$H \subset \bA(k) \cap \bU(S,\fa)$ of index either $1$ or $p$ such that the image of $m$ in $M_H$ is nonzero.
Choose an additive homomorphism $\lambda\colon \bA(k) \rightarrow \Field_p$ such that
$\ker(\lambda) \cap \bU(S,\fa) = H$.  

Since $\bA$ is (trivially) a split unipotent group, we can apply Proposition~\ref{proposition:unipotentproduct} to $(\bG,\bV) = (\bU,\bA)$.  We
deduce that there
is a subvariety $\bX$ of $\bU$ containing the identity such that the multiplication map $\bA \times \bX \to \bU$ is an isomorphism of varieties. Using
this product structure, let $\pi\colon \bU \rightarrow \bA$ be the projection onto the first factor. Since $\bX$ contains the identity, it follows that $\pi \vert_{\bA}$ is the identity map.
Define $f\colon \bU(k) \rightarrow \Field_p$ to be the composition
\[\bU(k) \stackrel{\pi}{\longrightarrow} \bA(k) \stackrel{\lambda}{\longrightarrow} \Field_p.\]
The map $f$ is an A-polynomial, so by Proposition~\ref{prop:groupapolynomial} there exists an infinite
additive subgroup $\fb$ of $\fa$ such that $f$ vanishes on $\bU(S,\fb)$.  This implies
that $\bA(k) \cap \bU(S,\fb) \subset H$, so the image of $m$ in
$M_{\bA(k) \cap \bU(S,\fb)}$ is nonzero.  Let $\overline{\bU} = \bU/\bA$ and let 
$\oS \subset \overline{\bU}(k)$ be the image of $S \subset \bU(k)$.  The action
of $\bU(S,\fb)$ on $M_{\bA(k) \cap \bU(S,\fb)}$ factors through $\overline{\bU}(\oS,\fb)$.
Using our inductive hypothesis, we can find an infinite additive subgroup $\fc$ of $\fb$
such that the image of $m$ in
\[\left(M_{\bA(k) \cap \bU(S,\fb)}\right)_{\overline{\bU}(\oS,\fc)}\]
is nonzero.  This implies that the image of $m$ in $M_{\bU(S,\fc)}$ is nonzero, as desired.
\end{proof}

\subsection{Modules over unipotent groups in char p}

The following is our generalization of Lemma \ref{lemma:polylemma} to the setting of unipotent groups over fields of positive characteristic.

\begin{proposition} 
\label{proposition:polylemmacharp}
Let $\bU$ be a smooth connected unipotent group over a field $k$ of positive characteristic~$p$ equipped with a positive $\bbG_m$-action. Set $\Lambda=\Z[1/p]$. Let $M$ be a $\Lambda[\bU(k)]$-module and let $N \subset M$ be a submodule. For some $m_1,\ldots,m_n \in M$ and $g_1,\ldots,g_n \in \bU(k)$, assume that
\[\text{$\presup{t}{g_1} \cdot m_1 + \cdots + \presup{t}{g_n} \cdot m_n \in N$ for all $t \in \bbG_m(k)$}.\]
Then $m_1+\cdots+m_n \in N$.
\end{proposition}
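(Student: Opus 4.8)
The plan is to obtain this as a quick consequence of Proposition~\ref{prop:positiveidealcorep}, which plays the role in positive characteristic that residual finiteness of finitely generated nilpotent modules (Corollary~\ref{corollary:separatenil}) played in the characteristic~$0$ proof of Proposition~\ref{proposition:polylemmachar0}. First I would replace $M$ by $M/N$, so that we may assume $N=0$; writing $m=m_1+\cdots+m_n$, the goal becomes $m=0$. Suppose for contradiction that $m\neq 0$, set $S=\{g_1,\ldots,g_n\}$, and take $\fa=k$, which (as $k$ is infinite) is an infinite additive subgroup of itself. Regard $M$ as a $\Lambda[\bU(S,\fa)]$-module by restriction along $\bU(S,\fa)\hookrightarrow\bU(k)$; recall that $\bU(S,\fa)$ is by definition the subgroup of $\bU(k)$ generated by the elements $\presup{a}{s}$ with $s\in S$ and $a\in\fa$.

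By Proposition~\ref{prop:positiveidealcorep} there is an infinite additive subgroup $\fc\subseteq\fa$ such that the image of $m$ in the coinvariants $M_{\bU(S,\fc)}$ is nonzero. I claim this image is in fact zero, which will be the contradiction. To see this, choose any nonzero $c_0\in\fc$; then $c_0\in k^{\times}=\bbG_m(k)$, so the hypothesis applied with $t=c_0$ gives $\presup{c_0}{g_1}\cdot m_1+\cdots+\presup{c_0}{g_n}\cdot m_n=0$ in $M$ (recall $N=0$). On the other hand, each $\presup{c_0}{g_i}$ lies in $\bU(S,\fc)$, since that group is generated by the elements $\presup{c}{s}$ with $s\in S$ and $c\in\fc$; hence $\presup{c_0}{g_i}$ acts trivially on $M_{\bU(S,\fc)}$. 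Projecting the relation $\presup{c_0}{g_1}\cdot m_1+\cdots+\presup{c_0}{g_n}\cdot m_n=0$ into $M_{\bU(S,\fc)}$, its left-hand side becomes the image of $m_1+\cdots+m_n=m$, so that image is $0$, contradicting the previous sentence. Hence $m=0$ in $M/N$, i.e., $m_1+\cdots+m_n\in N$, as desired.

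Given the preparatory sections, there is essentially no computation left here; the only thing one must get right is the bookkeeping, and that is also where I expect the sole (mild) subtlety to lie. The crucial point is that Proposition~\ref{prop:positiveidealcorep} must be invoked with $S$ taken to be \emph{exactly} the set $\{g_1,\ldots,g_n\}$ from the hypothesis, so that the group $\bU(S,\fc)$ whose coinvariants it controls is generated by precisely the elements $\presup{c}{g_i}$ that occur when the torus parameter is specialized to a nonzero $c\in\fc$; passing to those coinvariants then simultaneously converts one of the hypothesis relations into the wanted relation $m_1+\cdots+m_n=0$ while, by Proposition~\ref{prop:positiveidealcorep}, not annihilating $m$. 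All the genuine difficulty of the positive-characteristic case — the A-polynomial formalism, the appeal to Chevalley--Warning, and Lemma~\ref{lem:abeliancoinv} — has been absorbed into Proposition~\ref{prop:positiveidealcorep}, which is where the real obstacle lies, not in this final assembly step.
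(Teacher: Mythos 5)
Your proof is correct and is essentially the same argument as the paper's: both reduce to $N=0$, take $S=\{g_1,\dots,g_n\}$, note that the hypothesis with a nonzero $t$ in an additive subgroup makes the image of $m_1+\cdots+m_n$ vanish in the $\bU(S,\fb)$-coinvariants, and then invoke Proposition~\ref{prop:positiveidealcorep} with $\fa=k$. The only difference is cosmetic — you run the argument by contradiction (apply Proposition~\ref{prop:positiveidealcorep} first, then derive the contradiction), whereas the paper goes directly (show vanishing in all $M_{\bU(S,\fb)}$, then conclude) — but the two are logically identical.
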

\begin{proof}
Replacing $M$ by $M/N$, we can assume that $N=0$. Let $S=\{g_1,\ldots,g_n\}$, let $\fb$ be a non-zero additive subgroup of $k$, and let $t \in \fb$ be non-zero.  Letting
$\equiv$ denote equality in the $\bU(S,\fb)$-coinvariants of $M$, since the elements $\presup{t}{g_i} \in \bU(S,\fb)$ act trivially on these coinvariants we have
\[0 = \presup{t}{g_1} \cdot m_1 + \cdots + \presup{t}{g_n} \cdot m_n \equiv m_1 + \cdots + m_n.\]
We thus see that $m_1+\cdots+m_n$ maps to~0 in $M_{\bU(S,\fb)}$ for all non-zero $\fb$. Proposition~\ref{prop:positiveidealcorep} (applied with $\fa=k$) implies that $m_1+\cdots+m_n=0$.
\end{proof}

\subsection{Conclusion}
\label{section:assemblecharp}

The following is Theorem \ref{maintheorem:positiveideal} in the special case where $\Char(k)$ is positive and $\Char(\Field) \neq \Char(k)$.

\begin{theorem}
\label{theorem:positiveidealcharp}
Let $\bU$ be a smooth connected unipotent group over a field $k$ of positive characteristic~$p$ equipped with a positive action of $\bbG_m$ and let
$\bbF$ be another field with $\Char(\bbF) \neq p$.  Let $I \subset \bbF[\bU(k)]$ be a left ideal that is stable under $\bbG_m$ and not contained in the augmentation
ideal.  Then $I = \bbF[\bU(k)]$.
\end{theorem}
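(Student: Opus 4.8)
The plan is to run the argument of Theorem~\ref{theorem:positiveidealchar0} essentially verbatim, with Proposition~\ref{proposition:polylemmacharp} playing the role that Proposition~\ref{proposition:polylemmachar0} played there, together with one bookkeeping observation that legitimizes the substitution: since $\Char(\bbF) \neq p$, the prime $p$ is invertible in $\bbF$, so $\bbF$ is a $\Lambda$-algebra for $\Lambda = \Z[1/p]$, and hence $\bbF[\bU(k)]$ is naturally a $\Lambda[\bU(k)]$-module; moreover any $\bbG_m$-stable left ideal $I$ of $\bbF[\bU(k)]$ is in particular a $\Lambda[\bU(k)]$-submodule. With this in hand, the first step is: because $I \not\subset \ker(\epsilon)$, choose $x \in I$ with $\epsilon(x) = 1$ and write
\[
x = \sum_{i=1}^n c_i [g_i] \in I \quad \text{with } g_1,\ldots,g_n \in \bU(k),\ c_1,\ldots,c_n \in \bbF,\ \sum_{i=1}^n c_i = 1 .
\]

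Next I would exploit the $\bbG_m$-stability exactly as in characteristic $0$: for every $t \in \bbG_m(k)$ we have $\presup{t}{x} = \sum_i c_i [\presup{t}{g_i}] \in I$, which we rewrite (using that $\bU(k)$ acts on $\bbF[\bU(k)]$ by left multiplication) as
\[
\sum_{i=1}^n \presup{t}{g_i} \cdot \bigl(c_i [\id]\bigr) \in I \quad \text{for all } t \in \bbG_m(k).
\]
Now apply Proposition~\ref{proposition:polylemmacharp} with $M = \bbF[\bU(k)]$ (viewed as a $\Lambda[\bU(k)]$-module as above), $N = I$, and $m_i = c_i[\id]$: the displayed containment is precisely the hypothesis of that proposition, so we conclude $[\id] = \sum_i c_i[\id] \in I$, and therefore $I = \bbF[\bU(k)]$.

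I do not expect a serious obstacle at this stage: all of the genuinely hard work — the Chevalley--Warning input for A-polynomials (Proposition~\ref{prop:apolynomial}), the construction and flexibility of the subgroups $\bU(S,\fb)$ (Proposition~\ref{prop:groupapolynomial}), and the coinvariants-based replacement for Hall's theorem (Proposition~\ref{prop:positiveidealcorep}) — has already been packaged into Proposition~\ref{proposition:polylemmacharp}. The only point where the hypothesis $\Char(\bbF) \neq p$ is used in this concluding step is to make $p$ invertible in $\bbF$, so that $\bbF[\bU(k)]$ becomes a module over $\Lambda[\bU(k)]$ and Proposition~\ref{proposition:polylemmacharp} applies; the one thing worth a sentence of verification is that restriction of scalars along $\Lambda[\bU(k)] \to \bbF[\bU(k)]$ indeed sends the $\bbG_m$-stable left ideal $I$ to a $\Lambda[\bU(k)]$-submodule stable under the relevant $\presup{t}{g_i}$, which is immediate.
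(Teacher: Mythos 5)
Your proof is correct and follows essentially the same route as the paper's: choose $x \in I$ with $\epsilon(x)=1$, use $\bbG_m$-stability to get the family $\sum_i \presup{t}{g_i}\cdot c_i[\id] \in I$, observe that $\Char(\bbF)\neq p$ makes $\bbF[\bU(k)]$ a $\Lambda[\bU(k)]$-module for $\Lambda=\Z[1/p]$, and invoke Proposition~\ref{proposition:polylemmacharp}. (Incidentally, you cite the intended Proposition~\ref{proposition:polylemmacharp} here, whereas the paper's text contains a typo referencing the characteristic-$0$ version.)
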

\begin{proof}
The proof is nearly identical to that of Theorem~\ref{theorem:positiveidealchar0}. 
For $g \in \bU(k)$, write $[g]$ for the associated element of $\bbF[\bU(k)]$.  Let $\epsilon\colon \bbF[\bU(k)] \rightarrow \bbF$ be
the augmentation.  Since $I$ is not contained in the augmentation ideal, there
exists some $x \in I$ with $\epsilon(x) = 1$.  Write this as
\[x = \sum_{i=1}^n c_i [g_i] \in I \quad \text{with $g_1,\ldots,g_n \in \bU(k)$, $c_1,\ldots,c_n \in \bbF$, and $\sum_{i=1}^n c_i = 1$}.\]
Since $I$ is stable under the action of $\bbG_m(k)$, for all $t \in \bbG_m(k)$ we have
$\presup{t}{x} \in I$, so
\[\sum_{i=1}^n c_i [\presup{t}{g_i}] = \sum_{i=1}^n \presup{t}{g_i} \cdot c_i [1] \in I.\]
Since $\Char(\Field) \neq p$, the field $\Field$ is an algebra over $\Lambda = \Z[1/p]$, so we can regard $\bbF[\bU(k)]$ as
a $\Lambda[\bU(k)]$-module.  Applying Proposition \ref{proposition:polylemmacharp} with $M=\bbF[\bU(k)]$ and $N=I$, we deduce that
$[1] = \sum_i c_i [1] \in I$,
so $I = \bbF[\bU(k)]$.
\end{proof}

\end{document}